\pdfoutput=1
\documentclass[11pt]{amsart}
\usepackage{palatino}

\usepackage[english]{babel}
\usepackage[utf8]{inputenc}
\usepackage{fancyhdr}
\usepackage{amsmath,dsfont}
\usepackage{graphicx} 
\usepackage{caption} 
\usepackage{subcaption} 
\usepackage{amssymb}
\usepackage{amsthm}
\usepackage{dcpic}
\usepackage[hang,flushmargin]{footmisc}
\usepackage[left=1in,right=1in,top=1in,bottom=1in]{geometry}
\usepackage[ocgcolorlinks,linkcolor=linkblue,citecolor=linkred,urlcolor=linkred]{hyperref}
\usepackage{url}
\usepackage{arydshln}

\usepackage{MnSymbol}
\usepackage{pictexwd}
\usepackage{tikz-cd}
\usepackage{verbatim}
\usepackage{xcolor}
\usepackage{bibentry}
\usepackage[normalem]{ulem}

\usepackage{mdframed}

\usepackage{todo}
\makeatletter
\renewcommand*\@tododisplay[0]
\makeatother
\let\TodoOriginal\Todo
\renewcommand\Todo[2][To~do]{%
  {\TodoOriginal[#1]{\normalsize #2}}%
  \@todotrue%
}

\usepackage{JH-abbrev}

\usepackage{enumitem}
\usepackage{empheq}

\usepackage{pgf}

\makeatletter
\newcommand\pgfinvisible{\pgfsys@begininvisible}
\newcommand\pgfshown{\pgfsys@endinvisible}
\makeatother

\def\CIpidisplay{hide}

\usepackage[final]{showkeys} 
\usepackage{seqsplit} 
\usepackage{etoolbox}

\makeatletter
\newcommand{\affiliations}[1]{\gdef\@affiliations{#1}}
\let\@affiliations\@empty
\patchcmd{\@setauthors}
  {\MakeUppercase{\authors}}
  {\MakeUppercase{\authors}\par\smallskip\normalfont\@affiliations}
  {}
  {\PackageError{Project Regular Draft}{Unable to add affiliations below the authors}{}}
\makeatother

\usepackage{calc} 

\newcommand{\fullrlap}[1]{%
  \rlap{\kern\dimexpr-\@totalleftmargin+\textwidth+\marginparsep\relax#1}}

\newcommand{\subtag}[1]{
  \makeatletter
  \def\@currentlabel{#1}
  \makeatother
  \renewcommand{\theequation}{#1\arabic{equation}}
}

\newcommand{\e}{\mathrm{e}}
\newcommand{\bsigma}{\boldsymbol{\sigma}}

\usepackage{placeins}
\usepackage{thmtools}
\usepackage{cleveref}
\usepackage{environ}
\usepackage{booktabs}
\usepackage{tabularx}
\NewEnviron{killcontents}{}

\numberwithin{equation}{section}

\definecolor{linkred}{rgb}{0.75,0,0}
\definecolor{linkblue}{rgb}{0,0,0.75}

\theoremstyle{plain}

\theoremstyle{plain}
\newtheorem{theorem}{Theorem}[section]
\newtheorem{lemma}[theorem]{Lemma}
\newtheorem{proposition}[theorem]{Proposition}
\newtheorem{corollary}[theorem]{Corollary}

\theoremstyle{definition}
\newtheorem{definition}[theorem]{Definition}
\newtheorem{remarkinner}[theorem]{Remark}
\newenvironment{remark}
  {\pushQED{\qed}\remarkinner}
  {\popQED\endremarkinner}
\newtheorem{reminner}[theorem]{Remark}
\newenvironment{rem}
  {\pushQED{\qed}\reminner}
  {\popQED\endreminner}
\newtheorem{example}[theorem]{Example}

\newcolumntype{L}[1]{%
  >{\raggedright\arraybackslash}p{#1}}
\newcolumntype{Y}{%
  >{\raggedright\arraybackslash}X}

\Crefname{proposition}{Proposition}{Propositions}
\Crefname{definition}{Definition}{Definitions}
\Crefname{lemma}{Lemma}{Lemmas}

\newcommand{\aut}{\mathrm{Aut}}

\definecolor{kellygreen}{rgb}{0.3, 0.73, 0.09}

\newcommand{\sss}{\scriptscriptstyle}

\begin{document}

\title{A second-moment proof of quenched equals annealed\\for the Potts model on random regular graphs}

\author[Remco van der Hofstad]{Remco van der Hofstad\textsuperscript{1}}
\author[Kai-Chun Huang]{Kai-Chun Huang\textsuperscript{1}}
\author[Noela Müller]{Noela Müller\textsuperscript{1}}
\author[Wioletta M. Ruszel]{Wioletta M. Ruszel\textsuperscript{2}}

\affiliations{%
\textsuperscript{1}Department of Mathematics and Computer Science, Eindhoven University of Technology, The Netherlands\\
\textsuperscript{2}Mathematics Institute, Utrecht University, The Netherlands}

\keywords{Potts model, random regular graphs, configuration model, quenched and annealed pressures, second-moment method, Bethe variational formula}

\date{}

\begin{abstract}
Ferromagnetic spin models on random regular graphs, such as the Ising and Potts models, have been studied extensively.
Several equivalent variational and recursive representations of their limiting pressure are known, and these representations can be used to establish agreement between the quenched and annealed pressures. 
We give a concise overview of these representations and highlight the connections among the different formulations.
We then derive analogous representations for the exponential growth rate of the {\em second moment} of the partition function, and prove that it has the same order as the square of the annealed pressure, up to a subexponential error.
Combined with concentration of the quenched pressure, this identity yields a transparent second-moment proof of quenched--annealed agreement on random regular graphs. 
By bringing these representations and arguments together in a unified presentation, we aim to provide an accessible entry point to the literature on ferromagnetic spin models on random regular graphs.
\end{abstract}
\maketitle


\section{Introduction}
Spin systems provide some of the most fundamental models of collective behavior in statistical mechanics~\cite{Niss2005,lenz1920,Potts}. Despite their simple local interaction rules, they exhibit rich macroscopic phenomena, including phase transitions, symmetry breaking and critical behavior. Understanding how these phenomena emerge from the interplay between interactions and underlying graph geometry remains a central theme in modern probability and mathematical physics. 

In particular, spin systems on \emph{random graphs} have attracted considerable interest  \cite{Phys,Dembo,Dom,Sun,can,Rem,Dembo2,Dembo_rcm,Remco2,RemcoLDP,GerschenfeldMontanari,BasakDembo2017,MontanariMosselSly2012} because they provide a framework for understanding the interplay between structural randomness and collective behavior. Such models are subject to \emph{two} sources of randomness: the graph structure and the spin configuration defined on it. 
For a broad treatment of stochastic processes on random graphs, including Ising and Potts models, see, e.g.,~\cite{vdH_SPoRG}.

The main spin system considered in this article is the ferromagnetic $q$-state Potts model, where $q \geq 2$ is an integer. Its special case $q=2$ corresponds to the Ising model, originally proposed by Lenz as a model of ferromagnetism with binary spins and short-range interactions~\cite{lenz1920}. 
Potts later introduced the $q$-state generalization, in which each spin can take more than two values~\cite{Potts1952,Potts}.

To formally define the model, let $G=(V,E)$ be a finite graph. The corresponding $q$-state Potts measure on $G$ at inverse temperature $\beta\geq 0$  and with external field $B\geq 0$ is defined by\footnote{Here and in the following, we abbreviate $[q] = \{1,\ldots,q\}$.}
\begin{equation} \label{eq_def_Potts}
	\mu^{\mathrm{Potts}}_{\beta, B}({\bsigma})
	:= \frac{1}{Z_G(\beta,B)}
	\exp\left\{
	\beta\sum_{\{i,j\}\in E}\mathds{1}_{\{\sigma_i=\sigma_j\}}
	+B\sum_{i\in V}\mathds{1}_{\{\sigma_i=1\}}
	\right\},
	\qquad \bsigma\in[q]^V, 
\end{equation} 
where the normalization constant $Z_G \equiv Z_G(\beta, B)$ is called the \textit{partition function}. Each $\bsigma\in [q]^V$ is called a \emph{spin configuration}.
The parameter $\beta$ controls the strength of the interaction between neighboring spins.
Since $\beta\geq 0$, configurations in which adjacent vertices have the same spin receive greater weight and are therefore favored. Thus, the model is \emph{ferromagnetic}.
The external field $B$ favors the distinguished spin value $1$.

The main purpose of this paper is to provide a concise and unified overview of existing results on ferromagnetic Potts models on random $d$-regular graphs.  {Our primary random graph model is the random $d$-regular multi-graph generated by the configuration model. Assume that $dn$ is even. Starting from $n$ vertices, each vertex is assigned $d$ half-edges and the resulting $dn$ half-edges are then paired uniformly at random. By construction, every vertex has degree $d$, although self-loops and multiple edges may occur. See~\Cref{section:prelim} for more detailed definition.} {Throughout, we consider the thermodynamic limit $n\to\infty$ with $d$ fixed. }


The main quantity {of interest in this setting} 
is the \emph{pressure (per particle)}. {The double-randomness of graph and spin configuration has the effect that the thermodynamic behavior of these systems can be studied under two notions of averaging: the \emph{quenched} and the \emph{annealed} settings. }
Given a finite random graph $G=(V,E)$, the finite-volume
\emph{quenched} and \emph{annealed pressure} are, respectively, given by
\begin{equation}
	\varphi^{\mathrm{qu}}_G(\beta,B)
	:= \frac{1}{|V|}\bbE[\log Z_G(\beta,B)],
	\qquad
	\varphi^{\mathrm{an}}_G(\beta,B)
	:= \frac{1}{|V|}\log\bbE[Z_G(\beta,B)].
\end{equation}
Informally, the quenched pressure describes the partition function on typical graph realizations, whereas the annealed pressure averages over the graph and spin randomness simultaneously, and may be dominated by some atypical graph event.
We define the limiting pressures and annealed pressures $\varphi^{\mathrm{qu}}$ and $\varphi^{\mathrm{an}}$ as the limits of the finite-volume pressures when the graph size $|V|$ tends to infinity, whenever these limits exist.

{Many of the foundational rigorous results for Potts models on random graphs were first obtained in the special case $q=2$, corresponding to the Ising model.} 
In the quenched setting, the existence of the limiting pressure for the ferromagnetic Ising model on random regular graphs was first established by Gerschenfeld and Montanari~\cite{GerschenfeldMontanari}.
Dembo and Montanari~\cite{Dembo} subsequently obtained the limiting pressure and its cavity representation for the broader class of locally tree-like graphs, and this approach was extended in \cite{Dom} to power-law random graphs, which remain locally tree-like, but whose limiting degree distribution has a polynomially decaying tail and may have infinite variance.
It was later generalized in \cite{Dembo2} to factor models, which include the Potts model as a special case, on locally tree-like sparse random graphs.
The limiting quenched pressure thus admits a unified treatment through local convergence for a broad class of sparse random graphs and factor models, whereas the annealed pressure generally requires model-specific arguments. See, e.g., \cite{can,Remco2,RemcoLDP}.

In this article, we review several representations of the annealed and quenched pressures for the $q$-state Potts model with $q \ge 2$. We explain the main ideas behind these representations and the connections between them, with the aim of providing an accessible entry point to the subject.


A central question in the study of disordered systems that we will address is whether the quenched and annealed pressures coincide. While it is an immediate consequence of Jensen's inequality that the quenched pressure is at most the annealed pressure, it may of course be strictly smaller. For ferromagnetic Potts models on random regular graphs, this question is by now well understood, but the different approaches to it highlight several important probabilistic, combinatorial, and variational ideas.



{Since the quenched pressure is the expectation of $\log Z_G(\beta,B)$, while the annealed pressure involves $\log \mathbb{E}[Z_G(\beta,B)]$, the quenched-annealed agreement problem is naturally linked to the concentration properties of the partition function $Z_G(\beta,B)$.} Generally, for a nonnegative random variable, comparing its first and second moments is a standard way to control fluctuations and determine whether its expectation reflects typical behavior.
Analogously, {with the aim of providing a simple approach to the quenched-annealed agreement problem,} we study $\bbE[Z_{G_n}(\beta,B)^2]$ and $\bbE[Z_{G_n}(\beta,B)]$ as $G_n = (V_n, E_n)$ grows.
However, partition functions typically
grow exponentially in $|V_n|$, and even relatively small fluctuations in
$Z_{G_n}(\beta,B)$ may potentially lead to a non-trivial discrepancy between
$\mathbb{E}[Z_{G_n}(\beta, B)^2]$ and $\mathbb{E}[Z_{G_n}(\beta, B)]^2$ as $|V_n|$ grows. Since the pressure only records the exponential growth
rate of $Z_{G_n}$, the relevant comparison must be made on the \emph{exponential scale}, i.e., we ask whether 
\begin{equation} \label{eq_expo_growth}
	\bbE[Z_{G_n}(\beta,B)
    ^2] \overset{?}{=}e^{o({|V_n|})}
 \bbE[Z_{G_n}(\beta,B)]^2, \quad \text{as }n\to\infty.
\end{equation}

{We therefore investigate the asymptotic exponential growth of $\mathbb{E}[Z_{G_n}^2]$ and its relation to the annealed pressure.
The following theorem gives an informal version of our main result.}

\begin{theorem}
	[Quenched equals annealed: Informal version] \label{thm_main_inf} Let $d \ge 2$, $\beta\geq 0, B\geq 0$.
	For the ferromagnetic Ising and Potts models  on both the random $d$-regular configuration model and the uniform simple $d$-regular graph {$G_n$}, the second moment of the partition function has the same exponential growth rate as the square of its first moment:
	\begin{equation}\label{e:second_moment}
\bbE[Z_{G_n}(\beta,B)^2] = \e^{o({|V_n|})}
 \bbE[Z_{G_n}(\beta,B)]^2, \quad \text{as }n\to\infty.
	\end{equation}
	Moreover, for each graph model, the limiting quenched and annealed pressures exist and coincide. 
	The analogous conclusions also hold for the larger class of ferromagnetic spin models\footnote{{For the general definition of a ferromagnetic spin model, see Definition \ref{d:ferro} below.}}. 
\end{theorem}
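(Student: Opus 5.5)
The plan is to compute both moments exactly as combinatorial sums over empirical statistics in the configuration model, and then compare their exponential growth rates through variational formulas.

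\emph{First moment.} For a spin vector $\bsigma$ with color proportions $\alpha\in\Delta_q$, the half-edges are partitioned into $q$ color classes of sizes $d\alpha_a n$. A uniform perfect matching produces $x_{ab}dn/2$ edges between classes $a$ and $b$ with an explicit multinomial probability. Laplace's method then gives
\[
\tfrac1n\log\bbE[Z_{G_n}]\to \sup_{\alpha,x}\Phi_1(\alpha,x).
\]
Here $\Phi_1 = H(\alpha)+B\alpha_1+\tfrac d2\bigl(\beta\sum_a x_{aa}+H(x)\bigr)-(d-1)H(\alpha)$-type Bethe functional (up to the standard terms), and $x$ is a symmetric coupling matrix with marginals $\alpha$.

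\emph{Second moment.} We write $Z^2$ as a sum over pairs $(\bsigma,\bsigma')$, that is, a single spin model with $q^2$ colors $(a,a')$. Its interaction is $\beta(\mathds 1_{a=b}+\mathds 1_{a'=b'})$ and its field is $B(\mathds 1_{a=1}+\mathds 1_{a'=1})$. The same computation gives
\[
\tfrac1n\log\bbE[Z^2]\to\sup\Phi_2(\gamma,y),
\]
where $\gamma\in\Delta_{q^2}$ and $y$ is a coupling on $[q]^2\times[q]^2$.

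The lower bound $\sup\Phi_2\ge 2\sup\Phi_1$ is immediate by restricting to product measures $\gamma=\alpha\otimes\alpha$, $y=x\otimes x$. The entropy terms split additively under these product choices, so $\Phi_2(\alpha\otimes\alpha,x\otimes x)=2\Phi_1(\alpha,x)$ (this is Cauchy–Schwarz at the level of exponents). The crux is the matching upper bound: for every $(\gamma,y)$ one needs $\Phi_2(\gamma,y)\le 2\sup\Phi_1$. My approach is to maximize first over $y$ for fixed $\gamma$. The $y$-optimization is a strictly concave entropy problem whose maximizer has the form $y_{uv}\propto \psi(u,v)r_ur_v$, with $\psi(u,v)=e^{\beta\mathds 1_{a=b}}e^{\beta\mathds 1_{a'=b'}}$ a tensor product. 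I would then use the dual (Bethe/belief-propagation) representation of the supremum as a sup–inf over messages, and restrict the messages to be products. The ferromagnetic structure makes the kernel $e^{\beta\mathds 1_{a=b}}=(e^\beta-1)I+J$ positive semidefinite, and that is exactly what should allow a Hölder/Cauchy–Schwarz step: for a PSD kernel $K$, the bilinear form of $K\otimes K$ evaluated at a joint vector is bounded by the product of the form at its marginal vectors. This step should show that the optimal $\gamma$ can be replaced by the product of its marginals without decreasing $\Phi_2$. I expect this decoupling, in the spirit of the Dembo–Montanari–Sly–Sun PSD-kernel argument, to be the main obstacle; I would first attempt it directly on the dual (message) formula, where the Potts kernel factorizes cleanly. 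This yields \eqref{e:second_moment} for the configuration model.

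\emph{Simple graphs.} For the uniform simple $d$-regular graph, conditioning on simplicity changes each moment by at most a factor $1/\bbP(\text{simple})=O(1)$, since $\bbP(\text{simple})$ tends to a positive constant. Hence both moment estimates transfer.

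\emph{Quenched equals annealed.} Paley–Zygmund gives
\[
\bbP\bigl(Z\ge \tfrac12\bbE Z\bigr)\ge \tfrac14\,\bbE[Z]^2/\bbE[Z^2]=e^{-o(n)}.
\]
Revealing the matching edge by edge, $\log Z$ changes by at most $2\beta$ per switch, so Azuma/McDiarmid gives concentration of $\log Z$ around $\bbE\log Z$ at scale $o(n)$ with probability $1-e^{-\omega(\cdot)}$. Choosing deviations $\varepsilon n$, the failure probability is $e^{-c\varepsilon^2 n}$, which is much smaller than $e^{-o(n)}$. The event $\{Z\ge\tfrac12\bbE Z\}$ must therefore intersect the concentration event, which yields $\tfrac1n\bbE\log Z\ge\varphi^{\mathrm{an}}_{G_n}-\varepsilon-o(1)$. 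Jensen supplies the reverse inequality. Existence of $\varphi^{\mathrm{an}}$ follows from the Laplace limit, so $\varphi^{\mathrm{qu}}$ exists and equals it.

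For general ferromagnetic spin models (Definition \ref{d:ferro}), the same argument applies verbatim as long as the interaction kernel is PSD, which I take to be part of the definition.
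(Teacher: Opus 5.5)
Your first-moment Laplace computation, the lower bound via product couplings, and the Paley--Zygmund/Azuma step (which is Frieze's trick) are fine. The gap is the step you flag as the crux: the upper bound $\Psi_2\le 2\varphi^{\mathrm{an}}$ is not proved, and the mechanisms you sketch do not give it. The obstruction is the term $-(d-1)H(\gamma)$, whose negative coefficient rewards correlation between the two replicas. Write $\varphi_2(\gamma):=\sup_y\Phi_2(\gamma,y)$ and $\varphi(\boldsymbol\alpha):=\sup_x\Phi_1(\boldsymbol\alpha,x)$, and let $\mathbf K=(\psi(a,b))_{a,b}$. Dualize the inner problem as $\sup_{y\sim\gamma}[\langle\log\psi^{\otimes 2}\rangle_y+H(y)]=\inf_{r>0}\{\log(r^{T}\mathbf K^{\otimes 2}r)-2\langle\gamma,\log r\rangle\}$ and restrict to product messages $r=s\otimes t$. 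The infimum then splits, and you only get
\[
\varphi_2(\gamma)\le\varphi(\gamma^{(1)})+\varphi(\gamma^{(2)})+(d-1)\bigl[H(\gamma^{(1)})+H(\gamma^{(2)})-H(\gamma)\bigr],
\]
that is, $2\varphi^{\mathrm{an}}$ plus $(d-1)$ times the mutual information of $\gamma$. This loss is real. Take the Potts model with $B=0$, $d\ge3$, $\beta$ large, and $\gamma$ uniform on the diagonal $\{(a,a)\}$. The bound is then $\approx d\beta+\log q$, whereas $2\varphi^{\mathrm{an}}=d\beta+O(\mathrm{e}^{-\beta})$. Moreover, $\varphi_2(\gamma)\approx d\beta-(\tfrac d2-1)\log q$ exceeds $\varphi_2(\gamma^{(1)}\otimes\gamma^{(2)})\approx d\beta-(d-2)\log q$. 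So replacing $\gamma$ by the product of its marginals can \emph{decrease} $\varphi_2$; no pointwise decoupling exists, and the inequality has to be global. The PSD inequality you state is also false as written. For $K=\mathbf I_q$ and the joint vector $w=\tfrac1q\mathbf I_q$, you get $w^{T}(K\otimes K)w=1/q$, while the product of the marginal forms is $1/q^2$.

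The missing idea is the matrix-norm reformulation, in which the entropy term and the PSD structure interact correctly. On $d$-regular graphs, first absorb the field into the matrix, $\mathbf B_B=\mathbf D^{T}\mathbf B\mathbf D\succeq0$. The stationary-point correspondence between $\varphi(\boldsymbol\alpha)$ and $\Phi(\boldsymbol u)=\tfrac d2\log\langle\boldsymbol u,\mathbf B\boldsymbol u\rangle-(d-1)\log\sum_iu_i^{p}$, with $p=d/(d-1)$, then gives $\varphi^{\mathrm{an}}=d\log\|\hat{\mathbf B}\|_{p\to2}$ (Theorem~\ref{t:rep_3}). Applied to $\mathbf B\otimes\mathbf B=(\hat{\mathbf B}\otimes\hat{\mathbf B})^{T}(\hat{\mathbf B}\otimes\hat{\mathbf B})$, it gives $\Psi_2=d\log\|\hat{\mathbf B}\otimes\hat{\mathbf B}\|_{p\to2}$. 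The upper bound is then just $\|\hat{\mathbf B}\otimes\hat{\mathbf B}\|_{p\to2}\le\|\hat{\mathbf B}\|_{p\to2}^2$. To see this, apply $\hat{\mathbf B}$ to rows and then to columns, and exchange the mixed $\ell_p(\ell_2)$ and $\ell_2(\ell_p)$ norms by Minkowski's inequality. That exchange is exactly where $p\le2$ is used, and the $\ell_p$-denominator is what absorbs the replica correlation that defeats product messages. A second, smaller gap is that your transfer to simple graphs is one-sided. The bound $\mathbb{E}[X\mid A_n]\le\mathbb{E}[X]/\mathbb{P}(A_n)$ controls $\mathbb{E}_{\mathrm{UG}}[Z_n^2]$. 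However, the needed lower bound $\mathbb{E}[Z_n\mathds{1}_{A_n}]\ge\mathrm{e}^{-o(n)}\mathbb{E}[Z_n]$ does not follow from $\mathbb{P}(A_n)\to c_d$, since the annealed mass could a priori sit on non-simple graphs. You need to prove quenched $=$ annealed on the configuration model first, and then use that $A_n\cap\{Z_n\ge\mathrm{e}^{n(\varphi^{\mathrm{an}}-\varepsilon)}\}$ has probability bounded below, as in Lemma~\ref{lemma:config2simple}. Concentration for $\mathrm{UG}_n(d)$ should likewise be obtained by conditioning the configuration-model bound on $A_n$.
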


The precise statement of Theorem \ref{thm_main_inf} is given in \Cref{t:main1} and \Cref{t:main2} below.
The asymptotic relation \eqref{e:second_moment} is not stated explicitly in \cite[Theorem~7]{BIS}, which instead concerns the partition function restricted to a dominant empirical spin distribution. Nevertheless, the proof passes through the unrestricted paired-spin model and contains the matrix-norm calculation from which the full second-moment identity may be deduced; see \cite[proof of Theorem~7, equations~(22)--(25)]{BIS}. We make this implication explicit in \Cref{p:secondmoment_BIS} and obtain the corresponding statement for the unrestricted partition function.

Quenched--annealed agreement has previously been established under various assumptions and at different levels of generality \cite{BIS,Dembo2,Sun,can,BasakDembo2017,Remco_rcm,Hungarian}. Our contribution is to highlight a particularly direct route to this agreement through the full second moment. Specifically, we derive its exponential growth rate from the paired-spin representation and the matrix-norm argument of \cite{BIS}, combine this with concentration of the partition function, and transfer the resulting conclusions from the configuration model to the uniform simple $d$-regular graph. In doing so, we isolate the second-moment mechanism underlying quenched--annealed agreement and clarify how the restricted-phase analysis of \cite[Theorem~7]{BIS} naturally leads to a statement for the full partition function.


\medskip

\paragraph{\bf Outline}
The remainder of this paper is organized as follows.  In \Cref{section:prelim}, we {introduce basic notation and} define the physical and random graph models we are interested in. We also state Frieze's trick, which serves as a motivation for the study of the second moment {in the context of spin systems}. In \Cref{subsection:regular_moment}, we review the relevant representations of the annealed pressure for general factor models and derive
analogous representations for the normalized logarithm of the second moment.
In \Cref{subsection:regular_compare}  we apply these representations to random $d$-regular graphs,
identify the second-moment exponent, and deduce {the} quenched--annealed agreement.

\section{Preliminaries}\label{section:prelim}
\subsection{Notation}


For $n \in \bbN$, we write $[n] := \{1,\dots ,n\}$.
For $\boldsymbol{z} \in \bbR^n$ and $p\in[1,\infty)$, we define its $p$-norm by
\[
  ||\boldsymbol{z}||_p := \left( \sum_{i \in [n]}  |z_i|^p\right)^{1/p}. 
\] 
Given a matrix $\mathbf{A} \in \bbR^{m\times n}$ and $p,q\in[1,\infty)$, we define its induced $||\cdot||_{p\to q}$-norm by
\begin{equation}
	||\mathbf{A}||_{p\to q} := \sup_{ \boldsymbol{z}\in \bbR^n : \boldsymbol{z} \ne 0 }\frac{||\mathbf{A}\boldsymbol{z}||_q}{||\boldsymbol{z}||_p}.
\end{equation}
We write $\mathbf{A} \succeq 0$ if $\mathbf{A}$ is positive semidefinite. 
\begin{definition}[Entropy] 
The \textit{Shannon entropy} $H$ of a discrete probability distribution $ \boldsymbol{p} = (p_x)_{x\in S}$ on a countable set $S$ is defined {as}
\[
H(\boldsymbol{p}) =- \sum_{x \in S} p_x \log p_x,
\]
with the convention that $0 \log 0 \equiv 0$. 
\end{definition}
For a finite set $\calX$, define the probability simplex $\calP_{\calX}$ by
\[
\calP_{\calX}
:= \left\{\boldsymbol{p}\in[0,1]^{\calX}:\sum_{x\in\calX}p_x=1\right\}.
\]
In particular, we write $\calP_n:=\calP_{[n]}$.
For $r\in\bbN$, we also define
\[
\calP_{r\times r}
:= \left\{\mathbf{x}\in[0,1]^{r\times r}:
x_{ij}=x_{ji}\ \text{and}\ \sum_{i,j=1}^r x_{ij}=1
\right\}.
\]
Throughout the paper, $\bbP$ and $\bbE$ denote probability and expectation with respect to the randomness of the underlying random graph, unless another source of randomness is explicitly indicated. For random variables $X_n$ and $X$, we write $X_n \overset{\bbP}{\longrightarrow} X$ if $X_n$ converges to $X$ in probability, that is, if $\bbP(|X_n-X|>\ve)\to 0$ for every $\ve>0$.
A finite (multi-)graph will be denoted by $G = (V,E)$, and we write $v \sim u$ if $v, u\in V$ are connected by an edge. We also denote the graph distance between $u$ and $v$ by $d_G(u,v)$. For any $r \ge 0$, the ball centered at $v \in V$ of radius $r$, denoted by $B^{\sss(G)}( v; r)$, is defined as the subgraph of $G$ with the following vertex and edge sets:
\begin{equation}
    \begin{split}
        &   V(B^{\sss(G)}( v; r)) := \{ u \in V: d_G(u,v) \le r\} ,\\
        &E(B^{\sss(G)}( v; r)):= \{ \{u,w\}  \in E: d_G(u,v), d_G(w,v) \le r\}.
    \end{split}
\end{equation}

\subsection{Spin models and the quenched-annealed distinction}

{In this section, we introduce the spin models considered in this work and define the quenched and annealed settings, together with their associated pressures. We conclude by introducing the normalized logarithmic second moment of the partition function.}


{Let $G=(V,E)$ be a finite graph. Given a finite spin set (or alphabet) $\mathcal X$, the associated configuration space is \[ \Omega \equiv \Omega_G := \mathcal X^V. \] Its elements $\bsigma\in \Omega$ are called \textit{spin configurations}.} For the $q$-state Potts model, we set $\calX = [q]$. 

{In the literature, general spin models on finite graphs have been formalized in different ways, two of which we present here: the \emph{factor model} and the \emph{matrix model}. These generalize the Ising and Potts models \cite{Sun}, and we will obtain representations of the first and second moments of the partition function $Z_G(\beta,B)$ from \eqref{eq_def_Potts} regarded as a factor model in \Cref{subsection:regular_moment}.}


\begin{definition}[Factor model \cite{Dembo2}] \label{d:factor} {Let $G = (V,E)$ be a finite graph, } $\psi: \calX \times \calX \to (0,\infty)$ be a symmetric function, and $\bar{\psi}: \calX \to (0,\infty)$. {The \textit{factor model} specified by the pair $\pmb{\psi} =(\psi, \bar{\psi})$ is the probability measure $\mu_{\pmb{\psi}}$ on $\calX^V$ given by}
	\begin{equation}
		\mu_{\pmb{\psi}}({\bsigma}) = \frac{1}{Z_{G,\pmb{\psi}}}\prod_{ \{i,j\}\in E}\psi(\sigma_i,\sigma_j) \prod_{i \in V}\bar{\psi}(\sigma_i), \qquad \bsigma \in \calX^V.
	\end{equation}  
	Here, $Z_{G,\pmb{\psi}}$ is the normalization constant that makes $\mu_{\pmb{\psi}}$ a probability measure. 
    $Z_{G,\pmb{\psi}}$ is called the \textit{partition function} of the factor model.
\end{definition}
{In \Cref{d:factor},} we exclude the case where $\psi$ and $\bar{\psi}$ {can} take the value zero. This positivity condition can be relaxed to \textit{permissive} specifications \cite{Dembo2,Sun}, but since our focus is on ferromagnetic Potts models, we present all results for this slightly more restrictive setting. As mentioned, the 
 ferromagnetic $q$-state Potts model with a uniform external field $B$ and inverse temperature $\beta$ {is a factor model with specification}
\begin{align} \label{def_spec_Potts}
    \psi(a,b)=\exp\left(\beta\mathds{1}_{\{a=b\}}\right),
    \qquad
    \bar\psi(a)=\exp\left(B\mathds{1}_{\{a=1\}}\right).
\end{align}
{A second common representation of a slightly more restrictive}  class of spin models is 
in
terms of an interaction matrix. See, for example, \cite{BIS}. We refer to
this formulation as the \emph{matrix model}:

\begin{definition}[{(Ferromagnetic) Matrix model} \cite{BIS}]\label{d:ferro} 
Let $G = (V,E)$ be a finite graph and  $\mathbf{B} = (B_{i,j})_{i,j \in \calX }$ a symmetric matrix  with positive entries. 
The matrix model specified by the \emph{interaction matrix} $\mathbf{B}$ is the probability measure $ \mu_{\mathbf{B}}$ on $\calX^V$ given by
  \[
  \mu_{\mathbf{B}}({\bsigma})
  :=\frac{1}{Z_{G,\mathbf{B}}} \prod_{\{i,j\} \in E} B_{\sigma_i,\sigma_j},
  \qquad \bsigma\in\calX^V.
  \]
 {Here, $Z_{G,\mathbf{B}} $ is the normalization constant that makes $\mu_{\bfB}$ a probability measure.} The model is called \emph{ferromagnetic} if $\mathbf{B}$ is positive semidefinite.
\end{definition}
In \Cref{d:ferro}, for simplicity, we assume that every entry of $\mathbf{B}$ is strictly positive, while Galanis et al.~\cite{BIS} consider non-negative \emph{ergodic}\footnote{We refer to \cite[Chapter 6]{HornJohnson2013} for the definition of ergodic matrices.} matrices. Clearly, every real symmetric matrix whose entries are strictly positive is ergodic. 

\begin{example}[Potts model without external field] 
{The ferromagnetic $q$-state Potts model on any finite graph $G$ with $B=0$ can be captured as a ferromagnetic matrix model:} 
It is easy to check that the interaction matrix 
\begin{equation}\label{def_B}
	\mathbf{B} = \begin{pmatrix}
\e^\beta & 1       & \cdots & 1 \\
1       & \e^\beta & \cdots & 1 \\
\vdots  & \vdots  & \ddots & \vdots \\
1       & 1       & \cdots & \e^\beta
\end{pmatrix} \in \bbR^{q \times q}
\end{equation}
{specifies the $q$-Potts model with zero external field on an arbitrary finite graph. We next argue that $\mathbf{B}$ is positive semidefinite.} Write $\mathbf{J}_q$ for the $q\times q$ matrix whose entries are all $1$. {As $\boldsymbol{x}^T\mathbf{J}_q\boldsymbol{x} \geq 0$ for all $\boldsymbol{x} \in \bbR^q$, $\mathbf{J}_q$ } is positive semidefinite, {and} $\mathbf{B} = \mathbf{J}_q + (\e^\beta - 1)\mathbf{I}_q$, where $\mathbf{I}_q$ is the identity matrix of dimension $q$. Since $\beta\ge 0$, we have $\e^\beta - 1 \ge 0$. Hence, $ (\e^\beta - 1) \mathbf{I}_q$ {as a {non-negative} multiple of a positive definite matrix is positive {semi}definite}. Since $\mathbf{B}$ is a sum of two positive semidefinite matrices, we deduce that $\mathbf{B}$ {is positive semidefinite}. 
\end{example}

\begin{example}[Potts model with nonzero field] \label{example:concentration}
	
Consider a ferromagnetic Potts model on a $d$-regular graph with general {$B > 0$.}
It can be checked that the interaction matrix 
\begin{equation}
	\mathbf{B}_B = \begin{pmatrix}
\e^{\beta + 2B/d}&   \e^{B/d}    & \cdots &  \e^{B/d}\\
  \e^{B/d}     & \e^\beta & \cdots & 1 \\
\vdots  & \vdots  & \ddots & \vdots \\
 \e^{B/d}    & 1       & \cdots & \e^\beta
\end{pmatrix} \in \bbR^{q \times q}
\end{equation}
specifies this model. Moreover, {with $\mathbf{B}$ as in \eqref{def_B} and }
\begin{equation}
	\mathbf{D}= \begin{pmatrix}
\e^{B/d} & 0 & \cdots & 0 \\
0 & 1 & \cdots & 0 \\
\vdots & \vdots & \ddots & \vdots \\
0 & 0 & \cdots & 1
\end{pmatrix},
\end{equation}
we have the useful identity $\mathbf{B}_B = \mathbf{D}^T  \mathbf{B} \mathbf{D}$. By Sylvester's law of inertia~\cite[Theorem~4.5.8]{HornJohnson2013}, {the latter representation shows that} $\mathbf{B}_B = \mathbf{D}^T  \mathbf{B} \mathbf{D}$ is positive semidefinite. Therefore, the ferromagnetic Potts model on a regular graph with a uniform external field $B$ is also ferromagnetic in the sense of \Cref{d:ferro}. {A useful consequence is that, on regular graphs, the external field can be absorbed into the interaction matrix, allowing the model to be analyzed within the matrix-model framework. It is this crucial fact that sets regular graphs apart from non-regular ones.}
\end{example}

\begin{rem}[{Equivalence of factor and matrix models on regular graphs}]\label{remark:model_transform}
{Let $G=(V,E)$ be an arbitrary, finite $d$-regular graph. Then any factor model can be formulated as a matrix model, and vice versa:} 
Clearly, any matrix model {with interaction matrix $\mathbf{B}$} can be formulated as a factor model with specification components {$\psi(\sigma_i,\sigma_j) = B_{\sigma_i,\sigma_j}$ and} $\bar{\psi} \equiv 1$. Conversely, a factor model {with specification $\pmb{\psi}$} on $G$ can be formulated as a matrix model. To see this, we rewrite 
\begin{align*}
\mu_{\pmb{\psi}}({\bsigma}) &= \frac{1}{Z}\prod_{\{ i,j\}\in E}\psi(\sigma_i, \sigma_j) \prod_{i \in V} \bar{\psi}(\sigma_i) =  \frac{1}{Z}\prod_{\{ i,j\} \in E}\psi(\sigma_i, \sigma_j) \prod_{i\in V}\prod_{j \sim i}\bar{\psi}(\sigma_i)^{1/d}\\
&=  \frac{1}{Z}\prod_{\{ i,j\} \in E}\psi(\sigma_i, \sigma_j) \prod_{\{ i,j\}\in E}\left( \bar{\psi}(\sigma_i) \bar{\psi}(\sigma_j)\right)^{1/d}=  \frac{1}{Z}\prod_{\{ i,j\} \in E}\psi(\sigma_i, \sigma_j)\left( \bar{\psi}(\sigma_i) \bar{\psi}(\sigma_j)\right)^{1/d}.\\
\end{align*}
Define the interaction matrix $\mathbf{B}$ by
\[
B_{{\sigma_i},{\sigma_j}}
:=\psi(\sigma_i,\sigma_j)\bigl(\bar\psi(\sigma_i)\bar\psi(\sigma_j)\bigr)^{1/d}. 
\]
This transforms a factor model in a regular graph into a matrix model specified by an interaction matrix.
\end{rem}

\begin{definition}
 For a spin configuration $\sigma$, we refer to its unnormalized contribution $w_G({\bsigma})$ to the partition function of either a factor or matrix model as its \emph{Boltzmann weight}.   
\end{definition} 

{The previous remark shows that on regular graphs, we may use the factor model description and the interaction-matrix description of a spin system interchangeably. Although they
are essentially equivalent in our setting, we have decided to introduce both of them explicitly. }	 One reason is
that 
retaining both formulations makes it easier
to state the results {from the literature} in a form close to their original presentation, which
allows readers who are less familiar with the area to compare our discussion
directly with the source papers. A second reason is that each formulation is convenient for
a different part of our analysis: the factor model formulation is naturally
suited to the Bethe variational principle, whereas the interaction-matrix
formulation leads to the matrix-norm representation used in our second-moment
argument in \Cref{subsection:regular_compare}.

Keeping the previous discussion of the equivalence of factor and matrix models in mind, we formulate the following notions for spin models on \emph{random} graphs in the language of factor models. In the following, $G$ is assumed to be an arbitrary random graph.

\begin{definition}[Various measures associated to factor models {on random graphs}] \label{d:measures_random}
{Let $G = (V,E)$ be a finite random graph and $\pmb{\psi} =(\psi, \bar{\psi})$ be a factor model on $G$. }
	\begin{enumerate}
	\item \textbf{The random quenched measure} {on $\mathcal X^V$} is defined as 
	\begin{equation}
	\mu_G^{\mathrm{r\text{-}qu}}({\bsigma}) :=  \mu_{\pmb{\psi}}({\bsigma}), \qquad \bsigma\in \mathcal X^V.
	\end{equation}
	In particular, $\mu_G^{\mathrm{r\text{-}qu}}$ is a random variable on the underlying random graph model.
	\item \textbf{The averaged quenched measure} {on $\mathcal X^V$} is defined as
		\begin{equation}
		\mu_G^{\text{qu}}({\bsigma})  = \mathbb E\!\left[ {\mu_{\pmb{\psi}}({\bsigma})}
    \right], \qquad \bsigma\in \mathcal X^V.
		\end{equation}
        {In particular, $\mu_G^{\text{qu}}$ is a deterministic measure.}
		\item \textbf{The annealed measure}  {on $\mathcal X^V$} is defined as
		\begin{equation}
		 \mu_G^{\mathrm{an}}({\bsigma})
    := 
    {\frac{\mathbb E[\prod_{ \{i,j\}\in E}\psi(\sigma_i,\sigma_j) \prod_{i \in V}\bar{\psi}(\sigma_i) ]}{\mathbb E[Z_{G,\pmb{\psi}} ]}, \qquad \bsigma\in \mathcal X^V.}
		\end{equation}
        {In particular, $\mu_G^{\text{an}}$ is a deterministic measure.}
	\end{enumerate}
\end{definition}

The three measures introduced above correspond to different ways of averaging over the graph and spin configurations. For each measure, the logarithm of its normalizing constant is central to the thermodynamic description of the model, as it is directly related to the free energy and, through its derivatives, the relevant thermodynamic observables. Since the normalizing constant typically grows exponentially with the number of vertices, its logarithm is of order $|V|$. Normalizing the latter by $|V|$ therefore leads to the following three corresponding notions of pressure:


\begin{definition}[Pressure (per particle)/Free energy density] \label{d:ppp}
{Let $G = (V,E)$ be a finite random graph and $\pmb{\psi} =(\psi, \bar{\psi})$ be a factor model on $G$. }
	\begin{enumerate}
		\item \textbf{The random quenched pressure} is defined by 
		  \begin{equation}
			\varphi_G^{\mathrm{r\text{-}qu}}(\pmb{\psi}) = \frac{1}{|V|}\log Z_{G, \pmb{\psi}}.
		 \end{equation}
		\item \textbf{The averaged quenched pressure} is defined by
		\begin{equation}
			\varphi_G^{\text{qu}}(\pmb{\psi})= \frac{1}{|V|}\bbE[\log Z_{G, \pmb{\psi}}].
		 \end{equation}
		\item  \textbf{The annealed pressure }is defined by 
	\begin{equation}
	\varphi_G^{\text{an}}(\pmb{\psi}) = \frac{1}{|V|}\log \bbE[Z_{G, \pmb{\psi}}].
	\end{equation}
	\end{enumerate}
	For $V_n=[n]$, we {also} write $\varphi_n^\#$ for the corresponding finite-volume pressure, where $\#\in\{\mathrm{r\text{-}qu},\mathrm{qu},\mathrm{an}\}$.
\end{definition}

{While Definitions \ref{d:measures_random} and \ref{d:ppp} are formulated for a single random graph, we will be interested in the thermodynamic limit of a sequence of random graphs as the number of vertices tends to infinity. Thus, from now on,}
we consider factor models on a sequence of random graphs $(G_n)_{n\in\bbN}$ of diverging size.
{To emphasize this and for more transparent notation, we will replace all subscripts $G_n$ with $n$ and from now on, write $Z_n$ instead of $Z_{G_n}$.} 
We next define the relevant thermodynamic limits:
\begin{definition}[Thermodynamic limits]
Let $(G_n)_{n\in\bbN}$ be a sequence of random graphs $G_n = (V_n,E_n)$ with $|V_n|=n$ for all $n$, and $\pmb{\psi} =(\psi, \bar{\psi})$ be a factor model. As $n \to \infty$, the random quenched limit is given by
	\begin{equation}
		\varphi^{\mathrm{r\text{-}qu}}(\pmb\psi)
		:=\lim_{n\to\infty}\varphi_n^{\mathrm{r\text{-}qu}}(\pmb\psi),
	\end{equation}
	whenever this limit exists in the stated mode of convergence.
	For $\#\in\{\mathrm{qu},\mathrm{an}\}$, define the deterministic limit as $n \to \infty$ by
	\begin{equation}
		\varphi^\#(\pmb\psi)
		:=\lim_{n\to\infty}\varphi_n^\#(\pmb\psi),
	\end{equation}
	whenever 
    {it exists}.
\end{definition}
As {explained in the introduction, the focus of this article is on a collection of different representations for $\varphi^{\mathrm{qu}}(\pmb\psi)$ on random regular graphs, as well as the presentation of a relatively simple proof of the fact that $\varphi^{\mathrm{qu}}(\pmb\psi) = \varphi^{\mathrm{an}}(\pmb\psi)$ for ferromagnetic Potts models on these graphs.} 
To study the latter question, it is natural to compare the exponential growth rate of $\bbE[Z_n^2]$ with that of the square of $\bbE[Z_n]$.
We therefore define another central quantity, which captures the exponential growth rate of the second moment of the partition function as $n\to\infty$:

\begin{definition}[Normalized logarithm of the second moment] \label{d:second_moment}
{Let $(G_n)_{n\in\bbN}$ be a sequence of random graphs $G_n = (V_n,E_n)$ with $|V_n|=n$ for all $n$, and $\pmb{\psi} =(\psi, \bar{\psi})$ be a factor model. }
The thermodynamic limit of the normalized logarithm of the second moment of the partition function is
	 \begin{equation}
		\Psi_2(\pmb{\psi}) := \lim_{n\to\infty}\frac{1}{n}\log\bbE[Z_n^2],
	 \end{equation}
	 if the limit exists. 
\end{definition}

Unlike the other thermodynamic quantities, {$\Psi_2$} is not a common object discussed in the {statistical physics} literature. {However, we will see in \Cref{subsubsection:regular_second moment} that $Z_n^2$ can also be regarded} 
as the \textit{partition function} of a {suitably chosen} spin model on the random graph sequence $(G_n)_{n\in\bbN}$. Hence, $
   \Psi_2$ can also be thought of as the annealed pressure of a suitable physical model. 
Throughout the remainder of the paper, we
suppress the dependence on $\pmb{\psi}$ from the above notation {and assume that $\mathcal X = [q]$ for some $q \geq 2$.}


\subsection{Configuration Model}\label{s:configuration}

In this section, we define the specific sequence of random graphs that we will be working with. For this, we first define the more general configuration model. Let $\boldsymbol{d}=(d_i)_{i\in[n]}$ be a fixed sequence, where $d_i$ {represents} the degree of vertex $i\in V=[n]$. Denote  the total degree by $\ell_n = \sum_{i\in[n]} d_i$. We assume that $\ell_n$ is even. To construct the configuration model, we first set up a random matching of \textit{half-edges} from the given degree sequence. Initially, each vertex $v \in V$ is incident to $d_v$ half-edges, and we mark all these half-edges as \textit{unpaired}. We choose one half-edge $e$ arbitrarily from all the currently unpaired ones and choose a half-edge uniformly from the remaining unpaired half-edges to be paired with $e$. We then add an edge between the vertices to which these two half-edges are incident (they might be incident to the same vertex, in which case there is a self-loop), and mark these two half-edges as \textit{paired}. We repeat this pairing process until we run out of unpaired half-edges. The resulting random multi-graph is then called the \textit{configuration model} and is denoted by $\mathrm{CM}_n(\boldsymbol{d})$.

{The random $d$-regular graph and the uniform simple $d$-regular graph, the two main models of this article, can both be obtained from the configuration model.} 
The first one is given by the constant degree sequence $d_i=d$ for every $i\in[n]$, where $dn$ is even. The resulting random graph distribution is called the \textit{random $d$-regular graph} and is denoted by $\mathrm{CM}_n(d)$. {The uniform simple $d$-regular graph $\mathrm{UG}_n(d)$ can be obtained from $\mathrm{CM}_n(d)$ by conditioning $\mathrm{CM}_n(d)$ on being simple. }
{It is well known} that $\bbP(\mathrm{CM}_n(d) \text{ is simple}) \to c_d \in (0,1)$ for any $d \ge 2$. See \cite[Chapter 7]{vdH_2016}.

\subsection{Local convergence}\label{s:local_def}
 Local convergence describes the asymptotic structure of a graph as seen from a uniformly chosen vertex.
This notion is particularly relevant here, because a random $d$-regular graph is sparse and, at every fixed radius, locally resembles the infinite $d$-regular tree.
Results reviewed later relate the quenched pressure of spin models on such graphs to the belief-propagation recursion and the Bethe free energy on this limiting tree. See \Cref{d:BP,d:Bethe_free} below.

To formulate local convergence, let $(\mathbb T,o)$ be a random rooted tree and, for $t\in\bbN$, write
$
\mathbb T_t:=B^{\sss(\mathbb T)}(o;t)
$
for the radius-$t$ neighborhood of the root, regarded as a rooted graph.
For rooted graphs $H_1$ and $H_2$, we write $H_1\simeq H_2$ if they are isomorphic as rooted graphs. 
    
\begin{definition}[Locally tree-like graphs]\label{d:locally_tree}
	Let $G_n = (V_n ,E_n)$ be a sequence of random graphs such that $|V_n| \to \infty$, and let $(\mathbb{T},o)$ be a random rooted tree.
	For each $n$, let $o_n$ be a uniformly chosen vertex from $V_n$, independently of $G_n$.
	We say that $(G_n)$ \emph{converges locally in probability} to $(\mathbb T,o)$ if, for every $t\in\bbN$ and every finite rooted graph $H$,
	\begin{equation}
		\bbP\!\left(B^{\sss(G_n)}(o_n;t)\simeq H\mid G_n\right)
		\overset{\bbP}{\longrightarrow}
		\bbP(\mathbb T_t\simeq H).
	\end{equation}
	If $(G_n)$ converges locally in probability to {a random rooted tree,}
    then we say that $(G_n)$ is \emph{locally tree-like}, and if further the local limit is a regular tree, then we say that it is \emph{locally regular tree-like}.
\end{definition}

\subsection{{A second-moment criterion for equality of quenched and annealed pressures}}
 
 {We next explain how second-moment information can be used to prove equality of quenched and annealed pressures. The key quantity is the exponential growth rate $\Psi_2$ of the second moment of the partition function from \Cref{d:second_moment}. The following proposition shows that, together with concentration of the quenched pressure, the identity $\Psi_2=2\varphi^{\rm an}$ yields a general criterion for quenched--annealed agreement. The result is sometimes referred to as \emph{Frieze's trick}:}
 

\begin{proposition} [Frieze's trick \cite{kavita}] \label{proposition:Frieze}
 {Let $(G_n)_{n\in\bbN}$ be a sequence of random graphs $G_n = (V_n,E_n)$ defined on the same probability space with $|V_n|=n$ for all $n$, and $\pmb{\psi} =(\psi, \bar{\psi})$ be a factor model. }
  Assume that both $\varphi^{\mathrm{an}}$ and $\Psi_2$ exist, {with}
  \begin{equation} \label{e:second_moment_condition}
	2\varphi^{\mathrm{an}}=\Psi_2.
  \end{equation}
  Suppose further that the random quenched pressure is \emph{concentrated}, i.e., for any $\delta>0$, there exists a constant $c=c(\delta) >0$ such that 
  \begin{equation}\label{e:concentration}
    \bbP \left( \left|\frac{1}{n}\log Z_n -  \bbE\left[ \frac{1}{n}\log Z_n\right] \right| >\delta \right) \leq \e^{-c n}.
  \end{equation}
   Then the random quenched and quenched limits exist and both are equal to the annealed limit. More specifically, a.s.,
  $\varphi^{\mathrm{an}} = \varphi^{\mathrm{r\text{-}qu}} = \varphi^{\mathrm{qu}}$.
\end{proposition}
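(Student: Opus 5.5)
The plan is to combine the Paley--Zygmund inequality with the concentration bound \eqref{e:concentration}, and then pass from convergence of expectations to almost sure convergence by Borel--Cantelli.

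\textbf{Upper bound.} By Jensen's inequality, $\varphi_n^{\mathrm{qu}} = \frac1n\bbE[\log Z_n]\le \frac1n\log\bbE[Z_n]=\varphi_n^{\mathrm{an}}$. Hence $\limsup_n \varphi_n^{\mathrm{qu}}\le\varphi^{\mathrm{an}}$.

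\textbf{Lower bound.} The Paley--Zygmund inequality with parameter $1/2$ gives
\[
\bbP\bigl(Z_n\ge \tfrac12\bbE[Z_n]\bigr)\ge \frac{\bbE[Z_n]^2}{4\bbE[Z_n^2]}.
\]
By \eqref{e:second_moment_condition}, together with the existence of both limits, the right-hand side equals $\e^{-o(n)}$.

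Fix $\delta>0$ and suppose, along a subsequence, that $\frac1n\bbE[\log Z_n]<\varphi^{\mathrm{an}}-3\delta$. For large $n$ we have $\frac1n\log(\tfrac12\bbE[Z_n])\ge \varphi^{\mathrm{an}}-\delta$. Then the event $\{Z_n\ge\frac12\bbE Z_n\}$ is contained in $\{\frac1n\log Z_n-\frac1n\bbE\log Z_n>2\delta\}$. By \eqref{e:concentration} the latter has probability at most $\e^{-c(2\delta)n}$. This contradicts the lower bound $\e^{-o(n)}$. Hence $\liminf_n\varphi_n^{\mathrm{qu}}\ge\varphi^{\mathrm{an}}-3\delta$ for every $\delta>0$, and therefore $\varphi^{\mathrm{qu}}$ exists and equals $\varphi^{\mathrm{an}}$.

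\textbf{Almost sure statement.} For each $\delta>0$, the bound \eqref{e:concentration} is summable in $n$. Borel--Cantelli then gives that, a.s., $|\frac1n\log Z_n-\varphi_n^{\mathrm{qu}}|\le\delta$ eventually. This is the step where the hypothesis that all $G_n$ live on one probability space is used. Taking $\delta=1/k$ and intersecting over $k\in\bbN$, we get $\varphi_n^{\mathrm{r\text{-}qu}}\to\varphi^{\mathrm{qu}}=\varphi^{\mathrm{an}}$ a.s.

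The main point requiring care is the lower bound: the polynomial loss in Paley--Zygmund must be matched against exponential concentration. This is exactly why the equality in \eqref{e:second_moment_condition} is needed only on the exponential scale. Note also that if $Z_n$ is only known to be positive, $\log Z_n$ is still finite, since $\psi,\bar\psi>0$, so no integrability issues arise.
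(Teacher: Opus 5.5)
Your proof is correct. The paper gives no proof of this proposition and simply cites \cite{kavita}; your argument is the standard proof of Frieze's trick. It uses Jensen for the upper bound. It then applies Paley--Zygmund together with \eqref{e:second_moment_condition} to get $\bbP\bigl(Z_n\ge\tfrac12\bbE[Z_n]\bigr)\ge \e^{-o(n)}$, and plays this off against the exponential tail in \eqref{e:concentration} to force $\liminf_n\varphi_n^{\mathrm{qu}}\ge\varphi^{\mathrm{an}}$. Finally, Borel--Cantelli gives the almost-sure statement.

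Two small wording points, neither of which is a gap. First, the loss from Paley--Zygmund is only known to be subexponential, $\e^{-o(n)}$, not polynomial; your argument in fact uses only $\e^{-o(n)}$. Second, positivity of $\psi,\bar\psi$ makes $\log Z_n$ finite pointwise but not automatically integrable. Integrability is implicit in hypothesis \eqref{e:concentration}, and it is automatic on regular graphs, where $|E_n|$ is deterministic and hence $\log Z_n$ is bounded.
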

A proof of Frieze's trick is given in \cite{kavita}. 
{The concentration condition \eqref{e:concentration} can be established by relatively standard martingale techniques for a wide range of random graph and spin models. The more challenging ingredient is the second-moment condition \eqref{e:second_moment_condition}. Indeed, there is no a priori reason for the exponential growth rate of the second moment, \(\Psi_2\), to coincide with twice the annealed pressure. Establishing this identity for ferromagnetic Potts models on random regular graphs is one of the main objectives of the paper, and we return to it in \Cref{subsection:regular_compare}. As a warm-up, we begin by verifying \eqref{e:concentration} for the Potts model on the configuration model.}

 
 \begin{example}[Potts models on the configuration model satisfy \eqref{e:concentration}] \label{ex:Potts_con}
 A standard martingale argument shows that the $q$-state Potts model with parameters $(\beta, B)$ on a configuration model satisfies \eqref{e:concentration}.
Let $\mathrm{CM}_n(\boldsymbol{d})$ be a general configuration model as defined in \Cref{s:configuration}, and denote by $\ell_n$ the total degree. Denote by $(\calF_t)_{0 \leq t \le \ell_n/2}$ the filtration generated by the edge-pairing process of $\mathrm{CM}_n(\boldsymbol{d})$ described in \Cref{s:configuration}. We then show that the Doob martingale $(\bbE[\log Z_n \mid  \calF_t])_{0 \leq t \le \ell_n/2}$ has the property of bounded differences. An application of the Azuma--Hoeffding inequality will then establish the desired concentration.

{We label the half-edges arbitrarily and let $t \in [\ell_n/2]$. To establish the property of bounded differences, we} couple two configuration models $G \sim \mathrm{CM}_n(\boldsymbol{d})$ and $G' \sim \mathrm{CM}_n(\boldsymbol{d})$ such that $G'$ is obtained from $G$ by a single edge switching of the $t$-th paired half-edge. 
More explicitly, sample $t-1$ pairings of the configuration model $G$ as prescribed by $
\calF_{t-1}$, and use the same pairings for $G'$, so the first $t-1$ pairings of $G$ and $G'$ are exactly the same. At step $t$, we pick {the unpaired half-edge $x_t$ with the lowest label} in $G$ (and hence in $G'$) as the next half-edge to be paired in both $G$ and $G'$. For $G$, pair it uniformly with some other unpaired half-edge $y_t$ in $G$, as specified by $
\calF_t$. For $G'$, we also draw uniformly a half-edge $y_t'$ from the remaining unpaired half-edges in $G'$ independently of {the choice of $y_t$ and pair $x_t$ with $y_t'$}.
{Then continue to pair half-edges in $G$ and $G'$ identically until $y_t'$ is paired in $G$ (say, to $x_t'$). At this step, pair $y_t$ and $x_t'$ in $G'$. Afterwards, half-edges can again be paired identically in $G$ and $G'$ until all half-edges are paired. } 
The final graphs $G$ and $G'$ are configuration models and differ in at most {two edges}. This completes the coupling.  

We now show the bounded differences of the log-partition function. For a fixed $t \le \ell_n/2$, let $G$ and $G'$ be coupled as above and denote by $Z_n$ the Potts partition function on $G$ and by $Z_n'$ the one on $G'$ at $\beta, B \ge 0$. By construction, $Z_n$ given $\calF_{t-1}$ has the same distribution as $Z_n'$ given $\calF_{t}$. Thus 
\begin{align*}
	\big|\bbE[ \log Z_n \mid \calF_{t}] - \bbE[\log Z_n \mid \calF_{t-1} ] \big|  &= \big|\bbE[ \log Z_n \mid \calF_{t}] - \bbE[\log Z_n' \mid \calF_{t} ] \big|\\
	&\le \bbE[ |\log Z_n - \log Z_n' | \mid \calF_{t}] .
\end{align*}
To bound $|\log Z_n - \log Z_n' |$, note that the above coupled graphs differ by a single switching of two edges.
For any spin configuration, the switching {can change the number of monochromatic edges by at most $2$. Thus, its }
Boltzmann weight is rescaled by a factor $r \in [\e^{-2\beta},\e^{2\beta}]$.
Consequently, 
\[
|\log Z_n - \log Z_n' | \le {2} \beta \quad  a.s.
\]
Applying the Azuma-Hoeffding inequality 
\cite[Theorem 3.2.1]{Roch_2024} to the Doob martingale $( \bbE[ \log Z_n \mid \calF_{t}])_{t \le [\ell_n /2]}$, we obtain \eqref{e:concentration}. 
 \end{example}

{Similarly, one can use Doob martingales and the Azuma-Hoeffding inequality to prove concentration of the quenched pressure per particle for other spin systems on random graph models, such as } 
those with independent edges, e.g., Erd\H{o}s-R\'enyi graphs $G(n,c/n)$, in which each pair of vertices is {independently} connected by an edge with probability $c/n$.

While Frieze's trick provides a sufficient condition for quenched--annealed agreement, it is natural to ask to what extent its hypotheses are also necessary. A complete converse is false in general. Nevertheless, the next proposition identifies a mechanism that forces both the second-moment condition and quenched--annealed agreement to fail. More precisely, if the annealed pressure is dominated by an \emph{exponentially rare} graph event, then the two conditions fail simultaneously:
\begin{proposition}[Condition for quenched being different from annealed]
	 {Let $(G_n)_{n\in\bbN}$ be a sequence of random graphs $G_n = (V_n,E_n)$ with $|V_n|=n$ for all $n$, and $\pmb{\psi} =(\psi, \bar{\psi})$ be a factor model.}
Suppose there exists a sequence $(\calE_n)_n$ of events satisfying $\bbP(\calE_n)  = \e^{-\Theta(n)}$ and {$\bbE[Z_n] =(1+ \e^{-\Theta(n)})\bbE[Z_n \mathds{1}_{\calE_n}]$}. Then a.s., 
\begin{equation}
 \liminf_{n\to\infty}( \varphi^{\mathrm{an}}_n-\varphi_n^{\mathrm{r\text{-}qu}})>0 \quad \text{and  } \quad \liminf_{n\to\infty}\left(\frac{1}{n}\log\bbE[Z_n^2]-\frac{2}{n}\log\bbE[Z_n]
\right) >0.
\end{equation}
If further the relevant thermodynamic limits exist, then
	\[
	\varphi^{\mathrm{r\text{-}qu}} < \varphi^{\mathrm{an}} \quad \text{and} \quad \Psi_2 > 2\varphi^{\mathrm{an}}.
	\] 
\end{proposition}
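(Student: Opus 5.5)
The proof rests on two elementary consequences of the hypothesis $\bbE[Z_n]=(1+\e^{-\Theta(n)})\bbE[Z_n\mathds 1_{\calE_n}]$. The first moment is carried by the rare event, so Cauchy--Schwarz restricted to $\calE_n$ forces the second moment up. Off $\calE_n$, the partition function has exponentially small mean, so Markov's inequality pushes it down on typical graphs. We write $\bbP(\calE_n)\le \e^{-an}$ for some $a>0$ and $\bbE[Z_n\mathds 1_{\calE_n^c}]\le \e^{-bn}\bbE[Z_n]$ for some $b>0$ and all large $n$. The second bound follows from the hypothesis, since $\bbE[Z_n\mathds 1_{\calE_n^c}]=\bbE[Z_n]-\bbE[Z_n\mathds 1_{\calE_n}]=\bbE[Z_n](1-(1+\e^{-\Theta(n)})^{-1})$.

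\emph{Second moment.} By Cauchy--Schwarz,
\[
\bbE[Z_n\mathds 1_{\calE_n}]^2\le \bbE[Z_n^2]\,\bbP(\calE_n).
\]
Since $\bbE[Z_n\mathds 1_{\calE_n}]\ge \tfrac12\bbE[Z_n]$ for large $n$, this gives $\bbE[Z_n^2]\ge \tfrac14\e^{an}\bbE[Z_n]^2$. Taking $\frac1n\log$ yields $\liminf_n\bigl(\frac1n\log\bbE[Z_n^2]-\frac2n\log\bbE[Z_n]\bigr)\ge a>0$. This step is routine.

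\emph{Quenched gap.} Fix $0<c<\min(a,b)$ and set $F_n:=\{Z_n\ge \e^{-cn}\bbE[Z_n]\}$. We have
\[
\bbP(F_n)\le \bbP(\calE_n)+\bbP\bigl(Z_n\mathds 1_{\calE_n^c}\ge \e^{-cn}\bbE[Z_n]\bigr)\le \e^{-an}+\frac{\bbE[Z_n\mathds 1_{\calE_n^c}]}{\e^{-cn}\bbE[Z_n]}\le \e^{-an}+\e^{-(b-c)n}.
\]
This is summable, so Borel--Cantelli shows that a.s.\ $F_n$ fails for all large $n$. On $F_n^c$ we have $\varphi_n^{\mathrm{r\text{-}qu}}<\varphi_n^{\mathrm{an}}-c$. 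Hence a.s.\ $\liminf_n(\varphi_n^{\mathrm{an}}-\varphi_n^{\mathrm{r\text{-}qu}})\ge c>0$.

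Applying Borel--Cantelli requires all $G_n$ to live on a common probability space, as in Proposition \ref{proposition:Frieze}. That is an implicit assumption I would state, but the argument needs no independence across $n$. The last sentence of the proposition then follows by passing to the limits in both inequalities. The only real subtlety is extracting quantitative rates $a,b$ from the $\Theta(n)$ notation and choosing $c$ below both. I do not expect a genuine obstacle.
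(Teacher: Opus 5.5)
Your proof is correct and is essentially the paper's argument: Cauchy--Schwarz restricted to $\calE_n$ gives the second-moment gap, and Markov's inequality applied to $Z_n\mathds{1}_{\calE_n^c}$, combined with Borel--Cantelli (including the fact that $\calE_n$ occurs only finitely often), gives the quenched gap. Your single union bound for $F_n$ is a tidier packaging of the paper's decomposition $\log Z_n=(\log Z_n)\mathds{1}_{\calE_n}+(\log Z_n)\mathds{1}_{\calE_n^c}$, and both your explicit constant $\tfrac14$ (coming from $(1+\e^{-\Theta(n)})^{-2}$) and your common-probability-space caveat are accurate.
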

\begin{proof}We decompose the log-partition function into two parts using $\calE_n$, 
	\[
	\log Z_n  = (\log Z_n) \mathds{1}_{\calE_n} + (\log Z_{n })\mathds{1}_{\calE_n^c}.
	\]
	By the Borel-Cantelli Lemma {applied} to $(\calE_n)_{n\in\bbN}$, the first {summand} vanishes as $n$ tends to infinity,
	\begin{align}\label{eq_event0}
	    \log Z_n \mathds{1}_{\calE_n}  \overset{a.s.}{\longrightarrow} 0.
	\end{align}
	For the second {summand}, for any $\epsilon >0$, {Markov's inequality implies that}
	\[
	 \bbP( \log Z_n \mathds{1}_{\calE_n^c} > n(\varphi^{\text{an}}_n -\ve )) \le \bbP( Z_n \mathds{1}_{\calE_n^c}  >\e^{n (\varphi^{\text{an}}_n -\ve )}) \le \e^{-n ( \varphi^{\text{an} }_n-\ve )} \bbE[ Z_n \mathds{1}_{\calE_n^c}].
	\]
	By assumption, 
	\[
	\bbE[Z_n\mathds{1}_{\calE_n^c}]
	= \e^{-\Theta(n)}\bbE[Z_n]
	= \e^{-\Theta(n)}\e^{n\varphi^{\mathrm{an}}_n}.
	\]
	Thus, 
	\begin{align} \label{eq_event1}
	    \bbP( \log Z_n \mathds{1}_{\calE_n^c} > n(\varphi^{\text{an}}_n -\ve )) \le \e^{n\ve }\e^{-\Theta(n)}. 
	\end{align}
We now take $\ve >0$ small enough that the right-hand side {of \eqref{eq_event1}} is summable in $n$. Then, by the Borel-Cantelli lemma, a.s.
\begin{align}\label{eq_event2}
    \log Z_n \mathds{1}_{\calE_n^c} \le n( \varphi_n^{\text{an}} -\ve ) 
\end{align}
for all but finitely many $n$. Combining {\eqref{eq_event0} and \eqref{eq_event2}}, we conclude that a.s.,
\[
 \liminf_{n\to\infty}( \varphi^{\mathrm{an}}_n-\varphi_n^{\mathrm{r\text{-}qu}}) \ge \ve.
\]
Next, we prove the second part.
By the Cauchy--Schwarz inequality {and the assumption},
\[
 \bbE[Z_n^2 ] \ge \frac{(\bbE[ Z_n \mathds{1}_{\calE_n} ])^2 }{\bbP(\calE_n) }  =\frac{ {(1+ \e^{-\Theta(n)})} \bbE[Z_n]^2}{\e^{-\Theta(n)}} .
\]
Therefore, 
\[
\frac{1}{n}\log\bbE[Z_n^2]
\ge \Theta(1)+\frac{2}{n}\log\bbE[Z_n].
\]
Taking the limit $n\to\infty$ we obtain that $\Psi_2 > 2\varphi^{\mathrm{an}}$, completing the proof.  
\end{proof}

The proposition shows that the identity \( \Psi_2=2\varphi^{\mathrm{an}} \) rules out a simple mechanism by which quenched and annealed quantities can differ, namely that the annealed pressure is dominated by an exponentially rare graph event. In this sense, $\Psi_2$ provides partial information about whether the annealed partition function is representative of typical graph realizations.
This raises the question  whether an exponentially larger second
moment necessarily implies a discrepancy between the quenched and annealed
pressures. More precisely, assuming that the relevant thermodynamic limits
exist, does the implication
\[
    \Psi_2>2\varphi^{\mathrm{an}}
    \quad\Longrightarrow\quad
    \varphi^{\mathrm{qu}}<\varphi^{\mathrm{an}}
\]
hold under some additional assumptions on the spin models we are considering?
An affirmative answer would show that the second-moment condition
$\Psi_2=2\varphi^{\mathrm{an}}$, together with concentration, not only provides a sufficient criterion for
quenched-annealed agreement, but also captures
a necessary feature of this agreement.

\subsection{Related models}
{Results similar to those presented here have recently been obtained for the ferromagnetic Ising model on random $d$-regular graphs \emph{at fixed magnetization} in \cite{gheissari2025fixed}: The authors show that in the regime below the tree reconstruction threshold, the quenched free-energy density converges to the annealed free energy density. Moreover, the limiting value coincides with the Bethe free energy associated with a suitable Ising measure on the infinite $d$-regular tree.}

Another closely related model is the \emph{antiferromagnetic} Potts model, which corresponds to \eqref{eq_def_Potts} with $\beta < 0$. This model penalizes monochromatic edges, and notably, at zero temperature ($\beta =- \infty$), its partition function coincides with the number of proper $q$-colorings of the underlying graph. Salez~\cite{Salez2016} used interpolation to establish the existence of {an almost sure limit for the quenched} pressure of antiferromagnetic Ising and Potts models on configuration model graphs. The antiferromagnetic Potts model on the Poissonian Erd\H{o}s--R\'{e}nyi random graph was studied by Contucci, Dommers, Giardin\`a and Starr~\cite{ContucciDommersGiardinaStarr2013}, who developed an interpolation-based variational framework and proved the existence of a phase transition at negative temperature. 
However, the behavior of the antiferromagnetic Ising model is generally quite different from the ferromagnetic one. For example, here, the limit of the quenched pressure actually depends on the exact sequence of random graphs. Even the quenched-pressure limits of two random graph models with the same local limit may be different, as observed in \cite{Sun}. For general background on the antiferromagnetic Potts model, see \cite{Sokal2005}.

For a broader treatment of spin models on random graphs, see~\cite[Part IV]{vdH_SPoRG}.

\section{Results for random regular graphs}\label{s:d_reg}

In this section, we study the first and second moments of partition functions for spin models on random $d$-regular graphs.
In \Cref{subsection:regular_moment}, we review several representations of the annealed pressure and obtain corresponding representations of the normalized logarithm $\Psi_2$ of the second moment.
Finally, in
\Cref{subsection:regular_compare}, we use these representations to establish the relation $\Psi_2 = 2\varphi^{\mathrm{an}}$,
derive a second-moment proof of quenched--annealed agreement for the configuration model, and transfer these conclusions to the uniform simple $d$-regular graph.

\subsection{Representations of the moments of the partition function}\label{subsection:regular_moment}
The aim of this section is to develop representations of the normalized logarithms of the first and second
moments of the partition function that will later allow us to compare their
exponential growth rates. We begin in
\Cref{subsubsection:regular_first_moment} by reviewing several variational
representations of the annealed pressure on random regular graphs. In
\Cref{subsubsection:regular_second moment}, we extend these first-moment
representations to the second moment by interpreting $Z_n^2$ as the partition
function of a suitable spin model. {Recall that for notational convenience, we assume that $\mathcal X = [q]$ for some $q \geq 2$.}

\subsubsection{First moment}\label{subsubsection:regular_first_moment}

Table \ref{table-one} presents an overview of the representations discussed in this article.

We present two
representations of the annealed pressure of {spin} 
models on random $d$-regular graphs: the Bethe variational formula of Dembo, Montanari, Sly and
Sun~\cite{Sun} {and} the matrix-norm representation of Galanis,
\v{S}tefankovi\v{c}, Vigoda and Yang~\cite{BIS}. Two further representations specific to the Ising model can be found in \Cref{s:ising models}: the
moment-generating-function representation of Can~\cite{can}, and the edge-bicolored graph approach~\cite{BorinskyRenWiesmann}.
Although these representations characterize the same thermodynamic quantity with slight differences in how generally the results apply, the connections between their formulations are not immediate, and each may be useful in a different context.
For the reader's convenience, we describe the main ideas behind their derivations and
clarify how their different parametrizations are related. We treat the
matrix-norm representation in greater detail because it will later provide
the main tool for comparing the first and second moments of the partition
function.

Our overview is not intended to be exhaustive.
We highlight only the results most directly relevant to the questions on random regular graphs considered in this paper, and refer to the works cited below and the references therein for a broader account of the literature.
\medskip

\paragraph{\bf Variational formula via Bethe recursion.}
Our first result on the thermodynamic limit of the annealed pressure for random regular graphs is due to Dembo, Montanari, Sly and Sun \cite{Sun}. We first introduce the empirical edge distribution, which is both the variational parameter in their representation and a key object in its proof.

\begin{table}[t]
    \centering
    \footnotesize
    \setlength{\tabcolsep}{4pt}
    \renewcommand{\arraystretch}{1}

    \caption{{Representations of the annealed pressure}}
    \label{tab:first-moment-representations}

    \begin{tabularx}{\textwidth}{
        @{}
        L{0.19\textwidth}
        L{0.27\textwidth}
        L{0.19\textwidth}
        Y
        @{}
    }
        \toprule
        Representation
        & Variational parameter 
        & Applicable models 
        & Distinctive feature
        \\
        \midrule

        Bethe 
        representation,
        \Cref{proposition:rep_1}
        &
        Empirical edge distribution \(\mathbf{x}=(x_{ij})_{i,j \in [q]}\) 
        &
        Factor models with permissive $\boldsymbol{\psi}$ on random
        \(d\)-regular graphs, \(d\geq2\).
        &
        Most general representation considered here. It expresses the pressure as an entropy--energy optimization over local empirical statistics and provides a rigorous version of the Bethe prediction.
       \\  

        \addlinespace[0.5ex]

        Matrix-norm representation,
        \Cref{t:rep_3}
        &
        Empirical spin distribution \(\boldsymbol{\alpha}\), expressed through projective coordinates \(\boldsymbol{u}\in[0,\infty)^{q}\) via \( \alpha_i=\frac{u_i^p}{\sum_j u_j^p}. \)
        &
        Ergodic ferromagnetic matrix models on random
        \(d\)-regular graphs, \(d\geq 2\).
        &
        Recasts the variational problem as an induced matrix-norm formula. Its algebraic structure is the basis of our second-moment argument.
        \\
	\\[1ex]
 \midrule
        Matching representation,
        Theorem~\ref{t:can_rep}
        &
        Empirical proportion of positive spins \(t\in[0,1]\)
        &
        Ising model on random \(d\)-regular graphs,
        \(d\geq2\).
        &
        Scalar optimization over the positive-spin density, obtained by analyzing matched half-edges; useful for explicit analysis and valid also for \(d=2\).
        \\

        \addlinespace[0.2ex]

        Edge-bicolored-graph representation,
        \Cref{t:combinatorial_Ising_rep}
        &
        \(\theta\in\bbR\), angular parametrization 
        of the two edge-color generating variables 
        &
        Ising model on random \(d\)-regular graphs,
        \(d\geq3\).
        &
        Turns the high-temperature expansion into a weighted enumeration of edge-bicolored graphs and reduces the pressure to a one-dimensional angular optimization.
        \\
        \bottomrule
    \end{tabularx}
    \label{table-one}
\end{table}
\FloatBarrier


\begin{definition}[Empirical edge distribution] \label[definition]{d:edge_color}
Fix a spin configuration $\bsigma\in [q]^V$ on a finite graph $G= (V,E)$. Denote by $e(i,j)$ the number of edges connecting vertices of {spin} $i$ and $j$. The \textit{empirical edge distribution} is defined to be the probability matrix $\mathbf{x}=(x_{ij})_{i,j\in[q]} \in \calP_{q \times q}$ with entries 
	\begin{equation}
		x_{ij} = 
        \begin{cases}
            \frac{e(i,j)}{2|E|}, & \text{ if  } i\not=j, \\
            \frac{e(i,i)}{|E|}, & \text{else}.
        \end{cases}
	\end{equation}
	Further, the one-point marginal 
    vector $\boldsymbol{\alpha} = \boldsymbol{\alpha}(\mathbf{x}) = (\alpha_1, \ldots, \alpha_q) \in \calP_q$ {associated with $\mathbf{x}$} is given by
\begin{equation}\label{e:marginal}
	\alpha_i = \sum_{j=1}^q x_{ij}, \qquad i \in [q].
\end{equation}
\end{definition}

 Thus, each component $x_{ij}$ of the empirical edge distribution matrix $\mathbf{x}$ is half the proportion of edges between vertices of color $i$ and $j$ whenever $i \ne j$, whereas, on the diagonal, each $x_{ii}$ is the proportion of edges between vertices of color $i$. By construction, $\mathbf{x}$ is symmetric. Note also that if the graph $G$ is $d$-regular, then
 \begin{align}\label{eq_marg_reg}
      \alpha_i =\sum_{j=1}^q x_{ij} = \frac{1}{|E|} \left(e(i,i) + \sum_{j \ne i}    \frac{e(i,j)}{2}\right) = \frac{d\cdot | \{ v \in V: \sigma_v = i\}| }{2|E|} = \frac{|\{ v \in V: \sigma_v = i\}|}{|V|}.
 \end{align}
Thus, we see that $\alpha_i$ is the proportion of vertices of color $i$ in the graph, and in this case we call $\boldsymbol{\alpha}$ the \textit{empirical spin distribution}.

For functions $f:{[q]}^2\to\bbR$ and $\bar f:{[q]}\to\bbR$, {we next define their averages with respect to $\mathbf{x}, \boldsymbol{\alpha}$ as}
\[
\langle f\rangle_{\mathbf{x}}
:=\sum_{i,j\in {[q]}}x_{ij}f(i,j),
\qquad
\langle\bar f\rangle_{\boldsymbol{\alpha}}
:=\sum_{i\in {[q]}}\alpha_i\bar f(i).
\]
With this notation, we introduce the following quantity:
\begin{definition}[{Bethe rate function \cite[(9)]{Sun}}]\label{d:Beth_polytope}
 For a general factor model on a $d$-regular graph, the \textit{Bethe rate function} is defined by 
\begin{equation}\label{e:Bethe_local}
	\Psi_{\mathrm{Bethe}}(\mathbf{x}) := \langle \log \bar{\psi} \rangle_{\boldsymbol{\alpha}} - (d-1)H(\boldsymbol{\alpha}) +(d/2) [ \langle \log \psi\rangle_{\mathbf{x}}  +H(\mathbf{x})], \quad \mathbf{x} \in \calP_{q\times q}.
\end{equation}
\end{definition}

The Bethe rate function is motivated by the Bethe approximation in statistical physics on finite graphs; see, e.g., \cite[(37) and (38)]{BP}, where it is called the \textit{constrained Bethe free energy}.
For the extension of {\eqref{e:Bethe_local}} 
to more general locally tree-like graphs, see \cite[(1.23)]{Dembo2}.  

\begin{example}[Bethe rate function for Potts models]
{Recall the specific form of $\psi, \bar{\psi}$ from \eqref{def_spec_Potts} for the} $q$-state Potts model on a $d$-regular graph. {Plugging this into \eqref{e:Bethe_local}} yields 
\begin{align*}
	\Psi_{\mathrm{Bethe}} (\mathbf{x}) 
	&= B\alpha_1 - (d-1)H(\boldsymbol{\alpha}) +\frac{d}{2}\left( \beta \sum_{i \in [q]} x_{ii} +H(\mathbf{x})\right) .  
\end{align*}
\end{example}

 With \eqref{e:Bethe_local}, we may now state the first representation of the annealed pressure per particle as an optimization problem over the potential empirical edge distributions:
 
\begin{proposition}[{\cite[Theorem 3]{Sun}}] \label{proposition:rep_1}
	{Fix $d \ge 2$.} For any factor model {as in Definition~\ref{d:factor}} specified by $\pmb{\psi} = (\psi,\bar{\psi})$  
    on a sequence of random $d$-regular graphs generated by the configuration model,  the thermodynamic limit of the annealed pressure exists and  is given by
\begin{align}\label{equation:Bethe_prediction}
  \varphi^{\mathrm{an}}
  =\Psi^*(d,\pmb\psi)
  :=\sup_{\mathbf{x}\in\calP_{q\times q}}\Psi_{\mathrm{Bethe}}(\mathbf{x}).
\end{align}
    Moreover, the maximizer is an interior point of $\calP_{q\times q}$. 
\end{proposition}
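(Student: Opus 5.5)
We will compute $\bbE[Z_n]$ exactly by conditioning on the spin configuration and summing over the possible edge statistics, and then apply Laplace's method. Fix $\bsigma\in[q]^n$ with vertex counts $n_i=n\alpha_i$, so colour $i$ carries $dn_i$ half-edges. The Boltzmann weight of $\bsigma$ depends on the graph only through the edge counts $e(i,j)$, i.e.\ through the empirical edge distribution $\mathbf{x}$. Under $\mathrm{CM}_n(d)$, the number of perfect matchings of the $dn$ half-edges realizing given counts $(e(i,j))$ is
\[
\prod_{i}\frac{(dn_i)!}{\prod_{j\neq i}e(i,j)!\,(e(i,i))!\,2^{e(i,i)}}\prod_{i<j}e(i,j)!\,\prod_i\frac{(2e(i,i))!}{(e(i,i))!\,2^{e(i,i)}}\cdot\frac{1}{\text{(suitably symmetrized)}},
\]
more cleanly: choose, for each colour $i$, which of its $dn_i$ half-edges go to each colour $j$ (multinomial), then match them (factorials for $i\neq j$, $(2e-1)!!$ for $i=j$). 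We divide by the total number $(dn-1)!!$ of matchings and multiply by the multinomial $\binom{n}{(n_i)}$ counting spin configurations with marginal $\boldsymbol{\alpha}$. This gives the identity
\[
\bbE[Z_n]=\sum_{\mathbf{x}}\binom{n}{(n\alpha_i)}\,\bbP_{\boldsymbol{\alpha}}(\text{edge counts }\mathbf{x})\prod_{i\le j}\psi(i,j)^{e(i,j)}\prod_i\bar\psi(i)^{n\alpha_i},
\]
where the sum runs over the polynomially many admissible $\mathbf{x}$ with $|E|=dn/2$.

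Next we apply Stirling's formula to each factor. Since there are only $n^{O(q^2)}$ terms, $\frac1n\log\bbE[Z_n]$ equals the maximum of the per-term exponents up to $o(1)$. For the exponents we use $\frac1n\log\binom{n}{(n\alpha_i)}\to H(\boldsymbol\alpha)$. The matching probability has exponent $\frac d2H(\mathbf{x})-dH(\boldsymbol\alpha)$: the multinomials give $d\sum_i\alpha_i H(\text{row }i/\alpha_i)$, the factorial matchings and double factorials against $(dn-1)!!$ contribute the remaining terms, and the combination collapses to $\frac d2 H(\mathbf{x})-dH(\boldsymbol\alpha)$, with the diagonal convention $x_{ii}=e(i,i)/|E|$ absorbing the factors $2$. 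The energy term gives $\frac d2\langle\log\psi\rangle_{\mathbf{x}}+\langle\log\bar\psi\rangle_{\boldsymbol\alpha}$. Summing, each term has exponent $\Psi_{\mathrm{Bethe}}(\mathbf{x})+o(1)$, uniformly in $\mathbf{x}$. Because $\Psi_{\mathrm{Bethe}}$ is continuous on the compact set $\calP_{q\times q}$ and admissible points become dense, the upper and lower bounds match, and $\varphi^{\rm an}=\sup\Psi_{\mathrm{Bethe}}$.

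\textbf{The main obstacle} is the careful bookkeeping of the diagonal and off-diagonal factors, so that the entropy of the symmetric matrix $\mathbf{x}$ appears with exactly the stated normalization; Stirling errors near the boundary, where some counts are $0$, only cost $O(\log n)$ and are harmless. For the claim that the maximizer is interior, we argue by contradiction: at a boundary point with some $x_{ij}=0$, the derivative of $\frac d2 H(\mathbf{x})$ in the direction moving mass into $x_{ij}$ (and $x_{ji}$) is $+\infty$ (like $-\log x_{ij}$). The $-(d-1)H(\boldsymbol\alpha)$ term could also blow up if some $\alpha_i=0$, so we perturb along a direction that keeps $\boldsymbol\alpha$ fixed when $\alpha_i,\alpha_j>0$, e.g.\ $x_{ij},x_{ji}\mathrel{+}=\epsilon$, $x_{ii},x_{jj}\mathrel{-}=\epsilon$. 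When some $\alpha_i=0$ (an entire row vanishes), we compare the $\frac d2\cdot 2\epsilon\log(1/\epsilon)$ gain in $H(\mathbf{x})$ with the $(d-1)\epsilon\log(1/\epsilon)$ loss from $H(\boldsymbol\alpha)$: since $d>d-1$, the gain wins. In both cases we obtain a strict improvement, so the supremum is attained in the interior, with bounded $\log\psi,\log\bar\psi$ perturbations being $O(\epsilon)$.
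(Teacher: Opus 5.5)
Your proof is correct, and its main part is the paper's argument. You group configurations by empirical edge distribution, show that each class has expected mass $\e^{n\Psi_{\mathrm{Bethe}}(\mathbf{x})+O(\log n)}$ uniformly in $\mathbf{x}$, and use that there are only polynomially many classes. The differences there are minor. You derive the matching count yourself, whereas the paper cites \cite{Sun} for it. You also get the lower bound from density of realizable $\mathbf{x}$ plus continuity, rather than from interiority of the maximizer. Your first displayed product is garbled: it includes the within-colour matching factor $(2e(i,i)-1)!!$ twice. The description after ``more cleanly'' is the correct count, however, and it does give the exponent $\frac d2 H(\mathbf{x})-dH(\boldsymbol\alpha)$.

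The interiority argument is where you genuinely depart from the paper, and your version is the more careful one. The paper moves along a single curve that puts mass $t$ on \emph{every} zero entry, including the diagonal entry $x_{ii}$ of an empty colour $i$. Mass on such an $x_{ii}$ earns only $\frac d2\,t\log(1/t)$ from $\frac d2 H(\mathbf{x})$ but costs $(d-1)\,t\log(1/t)$ through $H(\boldsymbol\alpha)$. So the claimed divergence of the slope to $+\infty$ can fail. For $q=2$ and $x_{11}=1$, the paper's curve gives $\Psi_{\mathrm{Bethe}}(\mathbf{x}(t))-\Psi_{\mathrm{Bethe}}(\mathbf{x})=(2-\frac d2)\,t\log(1/t)+O(t)$, which is negative for $d\ge 5$.

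Your case split is exactly what is needed here: when a whole row vanishes, you fill only an off-diagonal pair $(i,j),(j,i)$ with $\alpha_j>0$, for a net gain of $(d-(d-1))\,\epsilon\log(1/\epsilon)$. One small repair is needed on your side. Your sample direction $x_{ii},x_{jj}\mathrel{-}=\epsilon$ requires $i\neq j$ and $x_{ii},x_{jj}>0$. In general, take the compensating mass from any positive entry $x_{ab}$; then only strictly positive marginals move, so $H(\boldsymbol\alpha)$ changes by $O(\epsilon)$. Alternatively, use a marginal-preserving switch moving mass from $(i,k),(j,l)$ to $(i,j),(k,l)$, where $x_{ik},x_{jl}>0$.
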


\begin{proof}[Outline of the proof of \Cref{proposition:rep_1}] 
Let $G_n= (V_n,E_n)$ be a random $d$-regular graph generated by the configuration model with $|V_n| = n$, and $\pmb\psi$ be a factor model on $G_n$ with partition function $Z_n$. The basic idea of the proof is to decompose the partition function into a polynomial number of contributions from different empirical edge distributions. This is possible since all configurations inducing the same empirical edge distribution contribute the same Boltzmann weight to the partition function. Consequently, the partition function can be reorganized by grouping configurations according to their empirical edge distributions, thereby reducing a sum over all configurations to a sum over empirical edge distributions.

 Fix an edge distribution $\mathbf{x} \in \calP_{q \times q}$ and consider any configuration $\sigma {\in {[q]}^V}$ that induces the edge distribution $\mathbf{x}$. Then, {using \eqref{eq_marg_reg},} the $\mu_{\pmb{\psi}}$ and $Z_{{n},\pmb{\psi} }$ from \Cref{d:factor} satisfy
\begin{equation}\label{eq:Bolt_weight}
	   \mu_{\pmb{\psi}}({\bsigma}) \cdot Z_{{n},\pmb{\psi}} = \prod_{\{u,v\}\in E}\psi(\sigma_u,\sigma_v ) \prod_{v \in V} \bar{\psi}(\sigma_v) = \prod_{i,j \in {[q]}} \psi(i,j)^{|E|x_{ij} }\prod_{i \in {[q]}} \bar{\psi}(i)^{|V|\alpha_i}.
\end{equation}
With this notation, {and using that $|E_n| = dn/2$,} we may write 
\begin{align}\label{e:weight_edge}
 \mu_{\pmb{\psi}}({\bsigma}) \cdot Z_{{n},\pmb{\psi}} &=  \prod_{i,j \in{[q]} } \psi(i,j)^{dnx_{ij}/2 }\prod_{i  \in{[q]} } \bar{\psi}(i)^{n\alpha_i} = \e^{(dn/2)\sum_{i,j} x_{ij}\log \psi(i,j)+ n\sum_i   \alpha_i \log \bar{\psi}(i)} \nonumber \\
& = \e^{(dn/2) \langle \log \psi \rangle_{\mathbf{x}} + n \langle \log \bar{\psi} \rangle_{\boldsymbol{\alpha}}}.
\end{align}
%
{According to \eqref{e:weight_edge}, every configuration with the same empirical edge distribution renders the same deterministic contribution to the partition function.} We may thus write $Z_n = \sum_{\mathbf{x}} Z_n(\mathbf{x})$, where each $Z_n(\mathbf{x})$ is the contribution of the weights from empirical edge distribution $\mathbf{x}$ {to the partition function.}  Taking expectation over $G_n$ yields
\[
\bbE[Z_n(\mathbf{x})]  =   \e^{(dn/2) \langle \log \psi \rangle_{\mathbf{x}} + n \langle \log \bar{\psi} \rangle_{\boldsymbol{\alpha}}} \bbE[|\{\bsigma\in \Omega: {\mathbf{x}({\bsigma}) =} \mathbf{x}\}|],
\]
where {we write} ${\mathbf{x}({\bsigma}) = \mathbf{x}}$ if and only if $\bsigma $ induces the edge distribution $\mathbf{x}$. It remains to account for the number {$|\bsigma\in \Omega: {\mathbf{x}({\bsigma}) = \mathbf{x}}\}|$} of configurations leading to a certain empirical edge distribution. Note that {this number} is a random variable, since the empirical edge distribution of a fixed spin configuration depends on the exact incidence structure of the {random} graph. For a random $d$-regular graph, the latter expectation can be computed directly through counting, and an application of Stirling's formula gives (see \cite[{proof of } Theorem 3]{Sun})
\[
\bbE[|\{\bsigma\in \Omega:{\mathbf{x}({\bsigma}) = \mathbf{x}} \}|] = n^{O(1)} \e^{n [ (d/2) H(\mathbf{x}) - (d-1)H(\boldsymbol{\alpha})]}.
\]
 Since there are only polynomially (w.r.t. the graph size $n$) many admissible edge distributions, we can apply the Laplace principle to express the annealed pressure as the maximum of $\Psi_{\mathrm{Bethe}}(\mathbf{x})$ over the possible \emph{realizable} distributions $\mathbf{x}$. We now  argue that the global maximum of $\Psi_{\mathrm{Bethe}}$ lies in the interior of $\calP_{q \times q}$ (see also \cite[Theorem 1.16]{Dembo2}).

Suppose, toward a contradiction, that $\mathbf{x}$ is a local maximum of $\Psi_{\mathrm{Bethe}}$ and has a coordinate equal to zero. Set $I:= \{ (i,j) : x_{ij} = 0\}$ (notice that this set is symmetric). Since $\mathbf{x}$ is a probability distribution, there exists an $x_{ab} >0$. We consider a smooth curve $\mathbf{x}(t)$ that starts from $\mathbf{x}$ at $t=0$ and moves into the interior of $\calP_{q\times q}$, i.e. a curve $\mathbf{x}: [0,t_0]\to \calP_{q\times q}$ such that $x_{ij}(t) >0$ for all $i,j\in [q] $ and $t>0$. When $a\ne b$, we define
\begin{equation}
	x_{ij}(t)=\begin{cases} t, & (i,j) \in I,\\
			x_{ij}, & (i,j) \not\in I,  \{i,j\} \ne \{a,b\},\\
			x_{ab}- t|I|/2, &\{i,j\} = \{a,b\}.
	\end{cases}
\end{equation}
Let $\alpha_i(t):=\sum_j x_{ij}(t)$.
By the positivity of $\psi$ and $\bar{\psi}$, differentiating \eqref{e:Bethe_local} gives, for all sufficiently small $t>0$,
\begin{align*}
\frac{d}{dt}\Psi_{\mathrm{Bethe}}(\mathbf{x}(t))
&= \sum_i \bigl(\log\bar{\psi}(i)+(d-1)\log\alpha_i(t)\bigr)\alpha_i'(t) +\frac d2\sum_{i,j}\bigl(\log\psi(i,j)-\log x_{ij}(t)\bigr)x_{ij}'(t),
\end{align*}
which diverges to $+\infty$ as $t \downarrow 0$. 
Applying the mean value theorem to $\Psi_{\mathrm{Bethe}}(\mathbf{x}(t))$ on some small interval $[0,\epsilon]$ gives $\Psi_{\mathrm{Bethe}}(\mathbf{x}(\epsilon)) > \Psi_{\mathrm{Bethe}}(\mathbf{x})$, which contradicts the assumption that $\mathbf{x}$ is a local maximum. When $a =b$, $\mathbf{x}$ is defined exactly the same, except for $x_{ab}(t) = x_{ab}- |I|t$.
 

Finally, since the global maximum of $\Psi_{\mathrm{Bethe}}$ lies in the interior of $\calP_{q \times q}$, it is sufficiently close to one of the realizable $\mathbf{x}$, which proves \eqref{equation:Bethe_prediction}.
\end{proof}

The proof of \Cref{proposition:rep_1} shows that the value of the Bethe rate function {$\Psi_{\mathrm{Bethe}}$} evaluated at an empirical edge distribution $\mathbf{x}$ characterizes the growth rate of the expectation $\bbE[ Z_n(\mathbf{x})]$ by
\begin{equation}\label{e:Bethe_repres}
\bbE[Z_n(\mathbf{x})] = \e^{n \Psi_{\mathrm{Bethe}}(\mathbf{x}) +o(n) }.
\end{equation}
Thus, in the case of $d$-regular graphs generated by the configuration model, the generally complicated variational characterization \eqref{equation:Bethe_prediction} has a simple and rigorous probabilistic interpretation.

\begin{remark}
    Dembo, Montanari, Sly and Sun~\cite{Sun} establish the annealed pressure of general factor models on the configuration model.
    {For ferromagnetic Potts models, their formula coincides with a previously established lower bound on the quenched pressure for locally regular, tree-like graph sequences \cite{Dembo2}. This coincidence identifies the quenched pressure and establishes quenched-annealed agreement for ferromagnetic Potts models on random regular graphs generated by the configuration model. More generally, their results apply to factor models on arbitrary graph sequences that converge locally to the infinite $d$-regular tree satisfying a set of conditions.}
\end{remark}




\paragraph{\bf Variational formula via matrix norm.} We now turn to a second representation of the annealed pressure, formulated for the interaction-matrix models of \Cref{d:ferro} rather than in the factor-model formalism used above. This representation is obtained by decomposing the partition function according to the empirical spin distribution. We discuss it in greater detail because its matrix-norm formulation will be used later to analyze the second moment of the partition function.  

Consider a ferromagnetic matrix model $\mathbf{B}$ with $q$ {spins} and,
following \cite[(15)]{BIS}, define $\Phi: [0,\infty)^q \backslash \{0\} \to \bbR$ by 

\begin{equation} \label{e:magic}
	\Phi(\boldsymbol{u}) := \frac{d}{2}\log \left( \sum_{i,j=1}^q B_{i,j} u_iu_j\right) - (d-1) \log \left( \sum_{i=1}^q u_i^{\frac{d}{d-1}}\right).
\end{equation}
In the notation, we suppress the dependence of $\Phi$ on $d$ and $\mathbf{B}$. {The function $\Phi$ in \eqref{e:magic} is invariant under scaling, i.e., $\Phi(\lambda \boldsymbol{u}) = \Phi(\boldsymbol{u})$ for any $\lambda >0$. Therefore, we will also consider $\Phi$ as a function defined on the equivalence classes of the half-rays in $[0,\infty)^q$ and swap between these two definitions according to whichever is more convenient.}

\begin{theorem}[{\cite[Lemmas 11 and  12]{BIS}}]\label{t:rep_3} Let $d \ge 2$. For any ferromagnetic model on a {sequence of} random $d$-regular graphs with interaction matrix $\mathbf{B}$, the annealed pressure {exists and} is given by
	\begin{equation}
		\varphi^{\mathrm{an}} = \max_{\boldsymbol{u} \in [0,\infty)^q \backslash \{0\}} \Phi(\boldsymbol{u}).
	\end{equation}
	In particular, this leads to the matrix-norm representation of the annealed pressure as
	\begin{equation}\label{e:annealed_norm}
		\varphi^{\mathrm{an}} =  d\log || \hat{\mathbf{B}}||_{\frac{d}{d-1}\to 2},
	\end{equation}
    where $\hat{\mathbf{B}}$ satisfies 
	\[
	  \mathbf{B} = \hat{\mathbf{B}}^T\hat{\mathbf{B}}.
	\]
\end{theorem}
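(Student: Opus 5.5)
The plan is to start from the Bethe representation of \Cref{proposition:rep_1}, viewing the matrix model as a factor model with $\psi(i,j)=B_{i,j}$ and $\bar\psi\equiv1$ (\Cref{remark:model_transform}). This gives
\[
\varphi^{\mathrm{an}}=\sup_{\mathbf{x}\in\calP_{q\times q}}\Big\{\tfrac d2\big[\langle\log \mathbf{B}\rangle_{\mathbf{x}}+H(\mathbf{x})\big]-(d-1)H(\boldsymbol{\alpha}(\mathbf{x}))\Big\},
\]
and I will split the supremum into an outer maximisation over the empirical spin distribution $\boldsymbol{\alpha}\in\calP_q$ and an inner maximisation over symmetric $\mathbf{x}$ with marginal $\boldsymbol{\alpha}$. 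By \Cref{proposition:rep_1} the maximiser is interior, so I may work with $\alpha_i>0$ and $x_{ij}>0$ throughout.

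\emph{Step 1 (inner problem).} For fixed $\boldsymbol{\alpha}$, the map $\mathbf{x}\mapsto\langle\log\mathbf{B}\rangle_{\mathbf{x}}+H(\mathbf{x})$ is strictly concave. Its maximiser under the marginal constraints has, by Lagrange multipliers, the form $x_{ij}=B_{i,j}v_iv_j/\sum_{k,l}B_{k,l}v_kv_l$ for some $\boldsymbol{v}\in(0,\infty)^q$. Existence of such $\boldsymbol{v}$ is a symmetric Sinkhorn-type scaling, which is guaranteed by strict positivity of $\mathbf{B}$. Substituting back, the inner value equals $\log\sum_{i,j}B_{i,j}v_iv_j-2\sum_i\alpha_i\log v_i$. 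Equivalently, by Gibbs' inequality, it is the minimum of this expression over all $\boldsymbol{v}>0$. Hence
\[
\varphi^{\mathrm{an}}=\max_{\boldsymbol{\alpha}}\min_{\boldsymbol{v}}\Big\{\tfrac d2\log\sum_{i,j}B_{i,j}v_iv_j-d\sum_i\alpha_i\log v_i+(d-1)\sum_i\alpha_i\log\alpha_i\Big\}.
\]

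\emph{Step 2 (outer problem, the main obstacle).} I need to eliminate $\boldsymbol{\alpha}$ and land on $\max_{\boldsymbol{u}}\Phi(\boldsymbol{u})$; the first thing I would try is to prove two matching inequalities.
\begin{itemize}
\item[(a)] Fix $\boldsymbol{u}>0$, set $p=d/(d-1)$, and take $\alpha_i=u_i^{p}/\sum_j u_j^{p}$ in the max--min. Then minimise the bracket over $\boldsymbol{v}$, or more simply bound the inner minimum from below via the concavity/Gibbs argument. I expect this to give exactly $\Phi(\boldsymbol{u})$, because $(d-1)\sum_i\alpha_i\log\alpha_i-d\sum_i\alpha_i\log u_i=-(d-1)\log\sum_j u_j^{p}$. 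This yields $\varphi^{\mathrm{an}}\ge\sup_{\boldsymbol{u}}\Phi(\boldsymbol{u})$, provided I can control the $\boldsymbol{v}$-minimisation; that control is the delicate point.
\item[(b)] For the reverse inequality, evaluate the inner expression at the maximising pair $(\boldsymbol{\alpha}^*,\boldsymbol{v}^*)$. Stationarity in $\boldsymbol{\alpha}$ gives $(d-1)\log\alpha_i^*-d\log v_i^*=\text{const}$, i.e.\ $\alpha_i^*\propto (v_i^*)^{p}$. Plugging this in shows the value equals $\Phi(\boldsymbol{v}^*)\le\sup\Phi$.
\end{itemize}
The care needed here is the legitimacy of the stationarity/saddle argument: $\boldsymbol{\alpha}$ enters through a concave entropy term, but the min over $\boldsymbol{v}$ may destroy concavity. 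If the saddle argument fails, I would fall back on the direct counting proof of \cite[Lemmas 11--12]{BIS}, computing $\bbE[Z_n(\boldsymbol{\alpha})]$ by summing over pairings via coefficient extraction and then applying Laplace's method. In either case the boundary $u_i=0$ is handled by continuity of $\Phi$ on the projectivised compact set, so the maximum exists.

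\emph{Step 3 (matrix norm).} Since $\mathbf{B}\succeq0$, write $\mathbf{B}=\hat{\mathbf{B}}^T\hat{\mathbf{B}}$, for example with $\hat{\mathbf{B}}=\mathbf{B}^{1/2}$. Then $\sum_{i,j}B_{i,j}u_iu_j=\|\hat{\mathbf{B}}\boldsymbol{u}\|_2^2$ and $(d-1)\log\sum_iu_i^{p}=d\log\|\boldsymbol{u}\|_p$. Therefore
\[
\Phi(\boldsymbol{u})=d\log\big(\|\hat{\mathbf{B}}\boldsymbol{u}\|_2/\|\boldsymbol{u}\|_p\big).
\]
To pass from $\boldsymbol{u}\ge0$ to all real $\boldsymbol{u}\neq0$, I use that $\mathbf{B}$ has positive entries: $\boldsymbol{u}^T\mathbf{B}\boldsymbol{u}\le|\boldsymbol{u}|^T\mathbf{B}|\boldsymbol{u}|$, while $\||\boldsymbol{u}|\|_p=\|\boldsymbol{u}\|_p$. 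Hence the supremum defining $\|\hat{\mathbf{B}}\|_{p\to2}$ is attained on $[0,\infty)^q$, which gives \eqref{e:annealed_norm}.
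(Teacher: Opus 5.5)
\textbf{The gap is in Step 2(a): the inequality it needs is false for general $\boldsymbol{u}$.} Write $F(\boldsymbol{\alpha},\boldsymbol{v})$ for your bracket, so that Step 1 gives $\varphi(\boldsymbol{\alpha})=\min_{\boldsymbol{v}}F(\boldsymbol{\alpha},\boldsymbol{v})$. Your own computation shows $F(g(\boldsymbol{u}),\boldsymbol{u})=\Phi(\boldsymbol{u})$. Since $\boldsymbol{v}=\boldsymbol{u}$ is a competitor in that minimum, this yields $\varphi(g(\boldsymbol{u}))\le\Phi(\boldsymbol{u})$, which is the opposite of what (a) needs. Gibbs' inequality cannot help either, because it only bounds the inner value from above.

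The inequality is in fact strict for generic $\boldsymbol{u}$. The derivative $\partial_{v_i}F(g(\boldsymbol{u}),\boldsymbol{v})$ at $\boldsymbol{v}=\boldsymbol{u}$ equals $d\bigl[(\mathbf{B}\boldsymbol{u})_i/\langle\boldsymbol{u},\mathbf{B}\boldsymbol{u}\rangle-u_i^{p-1}/\sum_j u_j^{p}\bigr]$. This vanishes only when $\boldsymbol{u}$ satisfies the stationarity equation \eqref{e:stationary_Phi} of $\Phi$. Otherwise, moving $\boldsymbol{v}$ off $\boldsymbol{u}$ lowers $F$, so $\varphi(g(\boldsymbol{u}))<\Phi(\boldsymbol{u})$. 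No ``control of the $\boldsymbol{v}$-minimisation'' can rescue (a) for arbitrary $\boldsymbol{u}$, so the lower bound $\varphi^{\mathrm{an}}\ge\sup\Phi$ is unproved. Falling back on the counting proof of the cited lemmas just defers to the source.

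\textbf{What (a) actually proves is the upper bound.} For interior $\boldsymbol{\alpha}$, take $u_i=\alpha_i^{1/p}$; then $\varphi(\boldsymbol{\alpha})\le\Phi(\boldsymbol{u})$. Since the maximisers of $\varphi$ are interior by \Cref{proposition:rep_1}, this gives $\varphi^{\mathrm{an}}\le\sup\Phi$, and (b) with its envelope-theorem concerns becomes unnecessary.

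\textbf{The lower bound needs a different choice of $\boldsymbol{\alpha}$.} Given $\boldsymbol{u}>0$, set $\alpha_i:=u_i(\mathbf{B}\boldsymbol{u})_i/\langle\boldsymbol{u},\mathbf{B}\boldsymbol{u}\rangle$.
\begin{itemize}
\item The matrix $x_{ij}=B_{i,j}u_iu_j/\langle\boldsymbol{u},\mathbf{B}\boldsymbol{u}\rangle$ has marginal $\boldsymbol{\alpha}$. Gibbs is therefore tight, and $\varphi(\boldsymbol{\alpha})=F(\boldsymbol{\alpha},\boldsymbol{u})$.
\item One checks $F(\boldsymbol{\alpha},\boldsymbol{u})-\Phi(\boldsymbol{u})=(d-1)\sum_i\alpha_i\log(\alpha_i/g_i(\boldsymbol{u}))\ge 0$. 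Hence $\varphi^{\mathrm{an}}\ge\Phi(\boldsymbol{u})$, and continuity handles boundary $\boldsymbol{u}$.
\item Alternatively, apply (a) only at a maximiser of $\Phi$. This requires first showing it is interior and stationary, as the paper does, and then using convexity of $F(\boldsymbol{\alpha},\cdot)$ in $\log\boldsymbol{v}$ so that stationarity at $\boldsymbol{v}=\boldsymbol{u}$ makes it the minimiser.
\end{itemize}

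\textbf{Comparison with the paper.} With this repair your route genuinely differs from the paper's. The paper matches stationary points of $\varphi$ and $\Phi$ through Lagrange multipliers (\Cref{lemma:galanis}) and proves both maxima are interior. The repaired argument is instead a direct check that $\sup_{\boldsymbol{\alpha}}\inf_{\boldsymbol{v}}F=\sup_{\boldsymbol{v}}\inf_{\boldsymbol{\alpha}}F$ via two Gibbs inequalities. It needs no stationarity analysis and no Sinkhorn existence result.

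\textbf{Step 3 is fine.} It in fact supplies the justification, via $\boldsymbol{u}^T\mathbf{B}\boldsymbol{u}\le|\boldsymbol{u}|^T\mathbf{B}|\boldsymbol{u}|$, for restricting the $p\to2$ norm to nonnegative vectors, which the paper leaves implicit.
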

We next outline how the matrix-norm representation of Theorem \ref{t:rep_3} is derived in \cite{BIS}. To state the {key} lemma {on which Theorem \ref{t:rep_3} is based}, 
we define $\varphi: \calP_q \to \bbR$ \cite[(13)]{BIS}, 
\begin{equation}\label{e:objective}
	\varphi(\boldsymbol{\alpha}):= \sup_{\mathbf{x} \in \calP_{q\times q}:\mathbf{x}\sim \boldsymbol{\alpha}}\{ - (d-1)H(\boldsymbol{\alpha}) +(d/2) [ \langle \log \mathbf{B}\rangle_{\mathbf{x}}  +H(\mathbf{x})] \} , 
\end{equation}
where $\mathbf{x}\sim \boldsymbol{\alpha}$ means that $\mathbf{x}$ is subject to the constraints
\begin{equation}
	\alpha_i  = \sum_{j} x_{ij}, \quad i \in [q].
\end{equation}

\begin{rem}\label{remark:equal_Bethe}
Recall from 
   \Cref{remark:model_transform} that matrix models on $d$-regular graphs are essentially equivalent to factor models. Starting with any matrix model $\mathbf{B}$, setting $\psi = \mathbf{B}$ and $\bar{\psi} \equiv 1$, it is immediate to check that 
   \eqref{e:objective} reduces to the Bethe rate function \eqref{e:Bethe_local}.  {In terms of the function $\varphi$, Proposition~\ref{proposition:rep_1} thus implies that} the annealed pressure {of a suitable class of spin systems} satisfies the constrained optimization problem:
 \begin{equation}\label{e:constrained_optimization}
	\varphi^{\mathrm{an}} = \sup_{\boldsymbol{\alpha}} \varphi(\boldsymbol{\alpha}) = \sup_{\boldsymbol{\alpha}} \sup_{\mathbf{x} \sim\, \boldsymbol{\alpha}} \Psi_{\mathrm{Bethe}}(\mathbf{x}). 
 \end{equation} 
 {Equation \eqref{e:constrained_optimization} shows that $\varphi$ can be interpreted as the Bethe rate function maximized over a given empirical vertex distribution $\boldsymbol{\alpha}$, where the variable $\mathbf{x}$ is again interpreted as the empirical edge distribution (cf.~\Cref{d:Beth_polytope}).} {Moreover, for each fixed $\boldsymbol{\alpha}$, the inner function is strictly concave in $\mathbf{x}$, so its stationarity conditions uniquely determine the inner maximizer. By the envelope theorem, the remaining Lagrange conditions are precisely the stationarity conditions for \(\varphi\). }
\end{rem}

In line with Remark~\ref{remark:equal_Bethe}, in \cite{BIS}, the expression \eqref{e:objective} is derived by estimating the asymptotics of the Boltzmann mass contributed by a fixed empirical vertex distribution.

\begin{lemma}
	[{\cite[Lemma 11]{BIS}}]\label{lemma:galanis} 
 Let $d \ge 2$. Define $g: [0,\infty)^q\setminus \{0\} \to {\mathcal P_q}$ whose $i$-th component is given by
 \begin{equation}
	g_i(\boldsymbol{u}) := \frac{u_i^{\frac{d}{d-1}}}{\sum_j u_j^{\frac{d}{d-1}} }.
 \end{equation} 
 For any ferromagnetic model, the stationary points of $\varphi$ and $\Phi$ can be put into one-to-one correspondence through 
	\begin{equation}
		\label{e:connection}
		\alpha_i = g_i(\boldsymbol{u}) 
	\end{equation}
	(regarding $\boldsymbol{u}$ as an equivalence class). Specifically, given $\boldsymbol{\alpha}$ that is a stationary point of $\varphi$, every vector in the half-ray $g^{-1}(\boldsymbol{\alpha})$ is a stationary point of $\Phi$. Conversely, given any $\boldsymbol{u} \in [0,\infty)^q\setminus\{\boldsymbol0\}$ that is a stationary point of $\Phi$, $g(\boldsymbol{u}) $ is a stationary point of $\varphi$. Moreover, $g$ restricted to the set of stationary points of $\Phi$ is a bijection, modulo the half-ray equivalence relation. 
	At the corresponding stationary points, where $\boldsymbol{\alpha} = g(\boldsymbol{u})$, the two functions satisfy 
	\begin{equation}
		\varphi(\boldsymbol{\alpha}) = \Phi(\boldsymbol{u}). 
	\end{equation}
\end{lemma}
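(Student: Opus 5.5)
The plan is to solve the inner maximization in \eqref{e:objective} explicitly for fixed $\boldsymbol{\alpha}$. We then use the envelope theorem (cf.\ \Cref{remark:equal_Bethe}) to write the stationarity conditions of $\varphi$ in closed form, and finally match them against the stationarity conditions of $\Phi$ under the substitution $\alpha_i=g_i(\boldsymbol{u})$.

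\emph{Step 1 (inner problem).} Fix $\boldsymbol{\alpha}$ in the interior of $\calP_q$; boundary points are excluded by the argument in the proof of \Cref{proposition:rep_1}. The map $\mathbf{x}\mapsto \frac d2[\langle\log\mathbf{B}\rangle_{\mathbf{x}}+H(\mathbf{x})]$ is strictly concave on the affine slice $\{\mathbf{x}\sim\boldsymbol{\alpha}\}$. I will introduce multipliers $\lambda_i$ for the constraints $\sum_j x_{ij}=\alpha_i$. The first-order conditions give the product form
\[
x_{ij}=\frac{B_{ij}u_iu_j}{\boldsymbol{u}^T\mathbf{B}\boldsymbol{u}},\qquad u_i=\e^{-c\lambda_i},
\]
for a constant $c>0$ that I expect to be $c=1/d$ once the symmetric double counting of $x_{ij}=x_{ji}$ is handled carefully. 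The marginal constraint then reads
\begin{equation*}
\alpha_i=\frac{u_i(\mathbf{B}\boldsymbol{u})_i}{\boldsymbol{u}^T\mathbf{B}\boldsymbol{u}}.
\end{equation*}
Existence and uniqueness of such a positive $\boldsymbol{u}$ (up to scaling) follows from strict concavity, since the maximizer is interior.

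\emph{Step 2 (stationarity of $\varphi$).} By the envelope theorem, $\partial\varphi/\partial\alpha_i=(d-1)(\log\alpha_i+1)+\lambda_i$. Stationarity on the simplex therefore means that $(d-1)\log\alpha_i-d\log u_i$ does not depend on $i$. Equivalently, $\alpha_i\propto u_i^{d/(d-1)}$, that is, $\boldsymbol{\alpha}=g(\boldsymbol{u})$.

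\emph{Step 3 (stationarity of $\Phi$).} Differentiating \eqref{e:magic} shows that $\nabla\Phi(\boldsymbol{u})=0$ if and only if
\[
\frac{(\mathbf{B}\boldsymbol{u})_i}{\boldsymbol{u}^T\mathbf{B}\boldsymbol{u}}=\frac{u_i^{1/(d-1)}}{\sum_j u_j^{d/(d-1)}}\quad\text{for all } i.
\]
Multiplying by $u_i$, this says exactly that $u_i(\mathbf{B}\boldsymbol{u})_i/\boldsymbol{u}^T\mathbf{B}\boldsymbol{u}=g_i(\boldsymbol{u})$. This is the conjunction of the marginal constraint from Step 1 with the relation from Step 2.

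Hence a stationary $\boldsymbol{\alpha}$ of $\varphi$ yields, via its multiplier vector $\boldsymbol{u}$, a stationary point of $\Phi$. Every positive multiple of this $\boldsymbol{u}$ is also stationary, because $\Phi$ is scale invariant. Conversely, given a stationary $\boldsymbol{u}$ of $\Phi$, set $\boldsymbol{\alpha}=g(\boldsymbol{u})$. Then the product-form $\mathbf{x}$ built from $\boldsymbol{u}$ satisfies the inner first-order conditions, so by strict concavity it is the inner maximizer, and $\boldsymbol{\alpha}$ is stationary for $\varphi$.

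Bijectivity modulo rays follows because $g$ is injective on rays: $u_i\propto \alpha_i^{(d-1)/d}$. One point needs checking here. If a stationary $\boldsymbol{u}$ had zero coordinates, then $(\mathbf{B}\boldsymbol{u})_i>0$, because $\mathbf{B}$ has positive entries, and this contradicts the stationarity equation above at that coordinate. So stationary points are interior and no boundary cases arise.

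\emph{Step 4 (values).} I will substitute the product form into $\varphi$:
\[
H(\mathbf{x})+\langle\log\mathbf{B}\rangle_{\mathbf{x}}=\log(\boldsymbol{u}^T\mathbf{B}\boldsymbol{u})-2\sum_i\alpha_i\log u_i.
\]
Together with $\log\alpha_i=\frac{d}{d-1}\log u_i-\log\sum_ju_j^{d/(d-1)}$, the terms in $\sum_i\alpha_i\log u_i$ cancel exactly. What remains is $\frac d2\log(\boldsymbol{u}^T\mathbf{B}\boldsymbol{u})-(d-1)\log\sum_j u_j^{d/(d-1)}=\Phi(\boldsymbol{u})$.

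The main obstacle is the constant bookkeeping in Step 1. This means correctly treating the symmetric variables $x_{ij}=x_{ji}$ and the factor $d/2$, so that the multiplier exponent $c$ makes the relation from Step 2 line up with $g$ and the cancellation in Step 4 is exact. I will first verify this for $q=2$ before writing the general computation.
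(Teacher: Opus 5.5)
Your proposal is correct and follows essentially the paper's route. Both arguments use the Lagrange conditions to get the product form $x_{ij}\propto B_{ij}u_iu_j$ together with $\alpha_i\propto u_i^{d/(d-1)}$, and match this against the stationarity equation of $\Phi$ multiplied by $u_i$. Both also use the same positivity argument to rule out zero coordinates, and the same value computation to obtain $\varphi(\boldsymbol{\alpha})=\Phi(\boldsymbol{u})$.

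The only difference is organizational. You solve the inner problem for fixed $\boldsymbol{\alpha}$ and obtain the outer condition from the envelope theorem; your constant is indeed $c=1/d$. The paper instead writes one joint Lagrangian in $(\mathbf{x},\boldsymbol{\alpha})$, with an extra multiplier $\mu$ for the normalization, and fixes the scale by $\sum_{k,l}B_{kl}u_ku_l=1$.
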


\begin{proof}[Proof of \Cref{lemma:galanis}] 
The proof is adapted from \cite[Section 4.1]{Galanis}. We first derive the stationarity conditions of the two functions $\varphi$ and $\Phi$  through differentiation and then show that they are equivalent (after choosing a suitable representative for each stationary half-ray of $\Phi$). {For notational simplicity, throughout the proof, we abbreviate $p := d/(d-1)$.} By Remark~\ref{remark:equal_Bethe}, optimizing $\varphi$ reduces to optimizing $\Psi_{\mathrm{Bethe}}$.  We invoke the Lagrange multiplier method to solve the constrained problem \eqref{e:constrained_optimization} and identify the stationary condition for the function $\varphi$  defined in \eqref{e:objective}. 
Define the Lagrangian of $\Psi_{\mathrm{Bethe}}$ to be 
\begin{align}
\mathcal L(\mathbf{x},\boldsymbol{\alpha}, \boldsymbol{\lambda},\mu) &:= (d-1)\sum_i\alpha_i\log\alpha_i +\frac d2\sum_{i,j}x_{ij} \bigl(\log B_{ij}-\log x_{ij}\bigr) -\sum_i\lambda_i \left(\sum_jx_{ij}-\alpha_i\right) {-\mu\left(\sum_{i,j}x_{ij}-1\right)},
\end{align}	
where we parametrize \(\mathbf{x}\) through the entries \(x_{ij}\), \(1\leq i\leq j\leq q\), and set \(x_{ji}=x_{ij}\) for \(i<j\). 
In particular, there is no need for additional multipliers accounting for the condition that $\mathbf{x}$ is symmetric. 
Differentiating with respect to the off-diagonal variables \(x_{ij}\), \(i<j\), and with respect to \(\alpha_i\), \(\lambda_i\), and \(\mu\), yields the Lagrange conditions
\begin{align}
	\frac{\partial\calL}{\partial x_{ij}}(\mathbf{x},\boldsymbol{\alpha}, \boldsymbol{\lambda},\mu)  &=  d(\log B_{ij} - \log x_{ij}  -1) - \lambda_i -\lambda_j - 2 \mu = 0. \label{e:Lagrange_x} \\
    \frac{\partial\calL}{\partial \alpha_i}(\mathbf{x},\boldsymbol{\alpha}, \boldsymbol{\lambda},\mu)  &= (d-1)(\log \alpha_i +1)+\lambda_i = 0, \label{e:Lagrange_al} \\
     \frac{\partial\calL}{\partial \lambda_i}(\mathbf{x},\boldsymbol{\alpha}, \boldsymbol{\lambda},\mu)  &= -\left(\sum_jx_{ij}-\alpha_i \right)=0, \label{e:Lagrange_la} \\
     \frac{\partial\calL}{\partial \mu}(\mathbf{x},\boldsymbol{\alpha}, \boldsymbol{\lambda},\mu)  &= - \left(\sum_{i,j}x_{ij}-1\right) = 0. \label{e:Lagrange_mu}
\end{align}
For $i=j$, $\frac{\partial\calL}{\partial x_{ij}}(\mathbf{x},\boldsymbol{\alpha}, \boldsymbol{\lambda},\mu)  =  \frac{d}{2}(\log B_{ij} - \log x_{ij}  -1) - \lambda_i  -  \mu$, and setting this derivative to zero and multiplying by $2$ gives back condition \eqref{e:Lagrange_x}, which is why we only work with that condition for all $1\leq i \leq j \leq q$ in the following. 
Let $ u_i = \exp(- (\lambda_i{+\mu}) /d  -1/2)$ for $i \in [q]$. Then \eqref{e:Lagrange_al} and \eqref{e:Lagrange_x} become 
\[
\alpha_i = \exp\left(\frac{p\mu}{d}+\frac{p}{2}-1\right) u_i^p \quad \text{and }\quad x_{ij} = B_{ij} u_i u_j.
\]
Because $\boldsymbol{\alpha}$ is a probability vector, the last two identities reduce to 
\begin{equation}\label{e:Lagrange_condition}
	\alpha_i = \frac{u_i^p}{\sum_j u_j^p }\quad \text{and }\quad x_{ij} = B_{ij} u_i u_j  . 
\end{equation} 
Plugging \eqref{e:Lagrange_condition} into \eqref{e:Lagrange_la} and \eqref{e:Lagrange_mu}, respectively, yields
\begin{equation}\label{e:marginal_constraint}
	\frac{u_i^p}{\sum_j u_j^p } = u_i \sum_j B_{ij} u_j,
\end{equation}
and 
\begin{equation}\label{e:normalization}
	1 =  \sum_{i,j} B_{ij} u_i u_j.
\end{equation}

We conclude that a stationary point $\boldsymbol{\alpha}$ of $\varphi$ must be specified by some vector $\boldsymbol{u}$ through \eqref{e:Lagrange_condition} that satisfies the constraints \eqref{e:marginal_constraint} and \eqref{e:normalization}. 

On the other hand, differentiating the function $\Phi$ defined in \eqref{e:magic}   gives the stationarity conditions
\[
\frac{\partial \Phi}{\partial u_i}(\boldsymbol{u})  = d\frac{\sum_j B_{ij}u_j}{\sum_{k,l}B_{kl}u_ku_l} -  (d-1) \frac{p u_i^{{p-1}}}{\sum_j u_j^p } = 0.
\] 
Simplifying this equation for $i \in [q]$ gives
\begin{equation}\label{e:stationary_Phi}
	\frac{\sum_j B_{ij}u_j}{\sum_{k,l}B_{kl}u_ku_l} = \frac{ u_i^{{p-1} }}{\sum_j u_j^p }.
\end{equation}
Recall that $\Phi$ is invariant under rescaling $\mathbf{u}$ by positive constants, and \eqref{e:stationary_Phi} is preserved under such scaling as well. Moreover, if \(u_i=0\) for some \(i\), then strict positivity of \(\mathbf B\) implies that the one-sided derivative of \(\Phi\) in the direction of the $i$th unit vector is strictly positive; hence every stationary point of \(\Phi\) is strictly positive.
Since \(\mathbf B\) is strictly positive, every nonzero stationary point $\boldsymbol{u}$ can be rescaled so that \(\sum_{k,l}B_{kl}u_ku_l=1.\) For this representative, multiplying \eqref{e:stationary_Phi} by \(u_i\) yields 
\begin{equation}\label{e:recursion_2}
	u_i \sum_j B_{ij} u_j = \frac{u_i^p }{\sum_j u_j^p }.
\end{equation}
Thus, after fixing the scaling in this way, the stationarity condition for \(\Phi\) coincides with condition \eqref{e:marginal_constraint} obtained from the constrained optimization of \(\Psi_{\mathrm{Bethe}}\).

 Next, we show that the map \eqref{e:connection} gives a bijection between the stationary points of $\Phi$ and $\varphi$. Suppose $\boldsymbol{\alpha}$ is a stationary point of $\varphi$, thus corresponding to a stationary point $(\boldsymbol{x}, \boldsymbol{\alpha},\boldsymbol{\lambda},\mu)$ of $\calL$, and let $\boldsymbol{u}$ be defined by  $ u_i = \exp(- (\lambda_i{+\mu}) /d  -1/2)$. 
 Then $\boldsymbol{u}$ satisfies \eqref{e:Lagrange_condition} and \eqref{e:normalization}, and therefore  \eqref{e:recursion_2}.
 Thus, the vector $\boldsymbol{u}$ defined in this way is stationary for $\Phi$. Since \eqref{e:stationary_Phi} is preserved under positive scaling, every positive multiple of \(\boldsymbol{u}\), and hence every vector in \(g^{-1}(\boldsymbol{\alpha})\), is stationary for \(\Phi\). 
 
 Conversely, assume $\boldsymbol{u}$ is a stationary point of $\Phi$ 
 with \(\sum_{k,l}B_{kl}u_ku_l=1\) and {let $\boldsymbol{\alpha} = g(\boldsymbol{u})$.} Since $\boldsymbol{u}$ satisfies  \eqref{e:recursion_2}, it also satisfies the identical condition \eqref{e:marginal_constraint} as well as \eqref{e:normalization}. Moreover, setting $x_{ij} = B_{ij}u_iu_j$ yields a pair $(\boldsymbol{\alpha},\boldsymbol{x})$ that satisfies all conditions \eqref{e:Lagrange_condition} - \eqref{e:normalization}. Since the preceding change of variables is reversible, suitable multipliers \(\boldsymbol{\lambda}\) and \(\mu\) can be recovered, and the full Lagrange system is satisfied. We have thus constructed a stationary point $\boldsymbol{\alpha}$ of $\varphi$.
 This establishes the correspondence between the stationary points.  Moreover, \(g(\boldsymbol{u})=g(\boldsymbol{v})\) if and only if \(\boldsymbol{v}=c\boldsymbol{u}\) for some \(c>0\). Hence, \(g\) induces a bijection modulo the half-ray equivalence relation.
 
 Finally, we show that $\varphi(\boldsymbol{\alpha}) = \Phi(\boldsymbol{u})$ at stationary points related by \eqref{e:connection}. Let $\boldsymbol{\alpha}$ and $\boldsymbol{u}$ be a pair of corresponding stationary points. Then  
\begin{align*}
	\Phi(\boldsymbol{u}) &= \Phi\left(\frac{\boldsymbol{u}}{\sqrt{\sum_{k,l}B_{kl}u_ku_l}}\right) = 
    - (d-1) \log \left( \sum_{i=1}^q u_i^{{p}}\right) \stackrel{\eqref{e:Lagrange_condition}}{=} 
    (d-1) \sum_{i=1}^q \alpha_i  \log \left(\frac{\alpha_i}{u_i^p}\right) \\
	&\stackrel{\eqref{e:Lagrange_la}}{=} -(d-1)H(\boldsymbol{\alpha})- d\sum_{i,j}x_{ij} \log u_i = -(d-1)H(\boldsymbol{\alpha})- \frac{d}{2}\sum_{i,j=1}^q x_{ij} \log (u_iu_j) \\
	&\stackrel{\eqref{e:Lagrange_condition}}{=} -(d-1)H(\boldsymbol{\alpha})- \frac{d}{2}\sum_{i,j=1}^q x_{ij} \log\left( \frac{x_{ij}}{B_{ij}}\right) \stackrel{\text{Remark }\ref{remark:equal_Bethe}}{=}\varphi(\boldsymbol{\alpha}),
\end{align*}
where the fourth equality uses the symmetry of \(\mathbf{x}\). This completes the proof.
\end{proof}

\begin{rem}
	{Let $\boldsymbol{u} = (u_i)_{i \in [q]} \in (0,\infty)^q$ be the auxiliary vector associated with a stationary point $\boldsymbol{\alpha}$ of $\varphi$ via \eqref{e:Lagrange_condition}.} By combining \eqref{e:Lagrange_condition} and \eqref{e:marginal_constraint}, we get the fixed-point equation 
\begin{equation}\label{e:recursion}
	 u_i =\frac{1}{z_d}\left( \sum_{j }B_{ij} u_j\right)^{d-1}.
\end{equation}
	The fixed-point equation \eqref{e:recursion} that appears as a stationary condition for $\varphi(\boldsymbol{\alpha})$ turns out to exactly be the Bethe recursion defined in \Cref{d:BP}. This curious connection can be interpreted as an infinite analogue of \cite[Theorem 2]{BP}. See also the explanation below \Cref{d:Beth_polytope}. 
\end{rem}

\begin{proof}
	[Proof of \Cref{t:rep_3}] 
    First, observe that both $\varphi$ and $\Phi$ attain their maxima. By Remark~\ref{remark:equal_Bethe}, maximizing $\varphi$ is equivalent to maximizing $\Psi_{\mathrm{Bethe}}$, which is defined on the compact set $\calP_{q \times q}$, and thus attains its maxima. On the other hand, since $\Phi$ is invariant under rescaling $\mathbf{u}$ by positive constants, 
    \[
    \sup_{\boldsymbol{u} \in [0,\infty)^q \backslash \{0\}} \Phi(\boldsymbol{u}) = \sup_{\boldsymbol{u} \in [0,\infty)^q: \|u\|_2 = 1} \Phi(\boldsymbol{u}) = \max_{\boldsymbol{u} \in [0,\infty)^q: \|u\|_2 = 1} \Phi(\boldsymbol{u}),
    \]
    since the Euclidean unit sphere is compact.
    
    We next show that the local maxima of $\Phi$ and $\varphi$ must be stationary points, which allows us to use \Cref{lemma:galanis}. By Remark~\ref{remark:equal_Bethe}, for $\varphi$, it suffices to show that the local maxima of $\Psi_{\mathrm{Bethe}}$ are interior points, which was already done in the proof of Proposition~\ref{proposition:rep_1} (by Remark~\ref{remark:model_transform}, positivity of $\mathbf B$ implies positivity of the corresponding $\psi, \bar \psi$). For $\Phi$, {observe that it is sufficient to restrict to vectors $\mathbf{u}$ with $\|\mathbf u\|_2 = 1$, say.} We also proceed by contradiction. Suppose $\boldsymbol{u}$ is a local maximum of $\Phi$ with $\|\mathbf u\|_2 = 1$ and has some coordinate $u_i = 0$. We first note that by positivity of $\mathbf{B}$ and since $\mathbf u \not= 0$, $\sum_{ij} B_{ij} u_i u_j > 0$ and $\sum_{j} B_{ij} u_j > 0$. 
Therefore,
\[
\frac{\partial \Phi}{\partial u_i}(\boldsymbol{u})  = d\frac{\sum_j B_{ij}u_j}{\sum_{k,l}B_{kl}u_ku_l} -  (d-1) \frac{p u_i^{\frac{1}{d-1}}}{\sum_j u_j^{\frac{d}{d-1}} } = d\frac{\sum_j B_{ij}u_j}{\sum_{k,l}B_{kl}u_ku_l} >0.
\]
Hence, for all sufficiently small \(t>0\), \(\Phi(\boldsymbol{u}+t\boldsymbol{e}_i)>\Phi(\boldsymbol{u})\). By scale invariance, the same holds after normalizing \(\boldsymbol{u}+t\boldsymbol{e}_i\) to have unit norm, contradicting the local maximality of \(\boldsymbol{u}\) on \(\{\boldsymbol{v}\geq0:\|\boldsymbol{v}\|_2=1\}\).

Therefore, by \Cref{remark:equal_Bethe} and \Cref{lemma:galanis}, 
	\[
	\varphi^{\text{an}} = \max_{\boldsymbol{\alpha}} \varphi(\boldsymbol{\alpha}) = \max_{\boldsymbol{u} \ge 0: \boldsymbol{u} \ne 0} \Phi(\boldsymbol{u}). 
	\]
    Finally, by positive-semidefiniteness of $\mathbf{B}$, there exists $\hat{\mathbf{B}}$ with $\mathbf{B} = \hat{\mathbf{B}}^T\hat{\mathbf{B}}.$ 
    Recall that $p=d/(d-1)$. Since 
	\[
	\e^{\Phi(\boldsymbol{u}) } = \frac{\langle \boldsymbol{u},\mathbf{B} \boldsymbol{u}\rangle ^{d/2} }{||\boldsymbol{u}||_p^{d}} = \frac{\langle \boldsymbol{u},\hat{\mathbf{B}}^T \hat{\mathbf{B}} \boldsymbol{u}\rangle ^{d/2} }{||\boldsymbol{u}||_p^{d}} = \left( \frac{||\hat{\mathbf{B}}\boldsymbol{u}||_2 }{||\boldsymbol{u}||_p}\right)^d,
	\]
	we have 
	\[
	{\e^{\varphi^{\mathrm{an}}/d}} = \max_{ \boldsymbol{u} \ge 0: \boldsymbol{u} \ne 0 } \e^{\Phi(\boldsymbol{u})/d} = \max_{ \boldsymbol{u} \ge 0: \boldsymbol{u} \ne 0} \frac{||\hat{\mathbf{B}}\boldsymbol{u}||_2 }{||\boldsymbol{u}||_p} = ||\hat{\mathbf{B}}||_{p\to 2},
	\]
    which proves \eqref{e:annealed_norm}.
\end{proof}

\subsubsection{Second moment}\label{subsubsection:regular_second moment}
 In this section, we prove the existence of $\Psi_2$ from \Cref{d:second_moment} {for spin models on random regular graphs} and provide two representations for it. The results harness the results from the previous section along with the observation that the second moment of the partition function can be viewed as the partition function of a \textit{pair model} on the same random graph (see for example \cite{BIS}). 
 
 {Throughout the section, we work with the interaction-matrix models of \Cref{d:ferro} and again assume that $\calX=[q]$.}
 To formulate the main observation about $Z_n^2$, we first introduce the relevant matrix operation.
For two matrices $\mathbf{A},\mathbf{C}\in\bbR^{[q]\times [q]}$, their \emph{tensor product} $\mathbf{A}\otimes\mathbf{C}\in\bbR^{[q]^2\times [q]^2}$ is the matrix defined by
\begin{equation}\label{eq:tensor_product_definition}
(\mathbf{A}\otimes\mathbf{C})_{(a,b),(c,d)}
    :=A_{a,c}C_{b,d},
    \qquad (a,b),(c,d)\in [q]^2.
\end{equation}
In particular, $\mathbf{B}\otimes\mathbf{B}$ assigns  the product of their two original interaction weights to a pair of spins.

{Let $\mathbf{B}$ denote the interaction matrix of a spin model on a finite graph $G=(V,E)$.}
Set $\pmb{\Omega}:=([q]^2)^V$ and associate with each pair $\bsigma^{(1)},\bsigma^{(2)}\in\Omega$ the paired configuration $\pmb{\sigma}=(\bsigma^{(1)},\bsigma^{(2)})\in\pmb{\Omega}$.
Then the second moment of the partition function satisfies
 \begin{align*}
 Z_{G,\mathbf{B}}^2  &= \sum_{\bsigma^{(1)},\bsigma^{(2)} \in \Omega} \prod_{\{ i,j\} \in E} B_{\sigma^{(1)}_i,\sigma^{(1)}_j} \prod_{\{ s,t\} \in E} B_{\sigma^{(2)}_s,\sigma^{(2)}_t} = \sum_{\sigma^{(1)},\sigma^{(2)} \in \Omega} \prod_{\{ i,j\} \in E} (\mathbf{B}\otimes \mathbf{B})_{(\sigma^{(1)}_i, \sigma^{(2)}_i),(\sigma^{(1)}_j,\sigma^{(2)}_j)}\\
 &= \sum_{\pmb{\sigma} \in \pmb{\Omega}} \prod_{\{ i,j\} \in E} (\mathbf{B}\otimes \mathbf{B})_{\pmb{\sigma}_i,\pmb{\sigma}_j},
 \end{align*}
The last line is exactly the partition function of a model with spin set $[q]^2$ and interaction matrix $\mathbf{B}\otimes\mathbf{B}$ on the same graph $G$.
 
 \begin{remark}
    [Product of ferromagnetic models is ferromagnetic] \label{rem:ferro_product}
     Consider a ferromagnetic model specified by $\mathbf{B}$ with alphabet $[q]$. We show that the model specified by the tensor product $\mathbf{B}\otimes \mathbf{B}$ with alphabet $[q]\times [q]$ is ferromagnetic. This amounts to showing that the tensor product of a positive semidefinite matrix with itself is still positive semidefinite. To see this, let $\mathbf{P}$ be a change-of-basis matrix such that $\mathbf{P} \mathbf{B}\mathbf{P}^{-1} = \mathbf{\Lambda}$, where $\mathbf{\Lambda}$ is a diagonal matrix whose entries are all nonnegative. Therefore, $(\mathbf{P} \otimes \mathbf{P} ) (\mathbf{B}\otimes \mathbf{B})(\mathbf{P} \otimes \mathbf{P} )^{-1} = \mathbf{\Lambda} \otimes \mathbf{\Lambda}$. Clearly, the tensor product of a diagonal matrix whose entries are nonnegative with itself also has nonnegative entries, which proves that $\mathbf{B}\otimes \mathbf{B}$ is positive semidefinite.
 \end{remark}
 
The partition-function interpretation of $\Psi_2$ enables an application of the first-moment representations in \Cref{proposition:rep_1} and \Cref{t:rep_3} to $\Psi_2$ as well. {Using \Cref{remark:model_transform} to go between factor and matrix models on regular graphs,} we first state the Bethe variational representation of $\Psi_2$: 

\begin{proposition}[Bethe variational representation of $\Psi_2$] \label{proposition:second_moment_Bethe}
{Fix $d \ge 2$ and let $\pmb{\psi} = (\psi,\bar{\psi})$  be a factor model as in Definition~\ref{d:factor}
    on a sequence $(G_n)_n$ of random $d$-regular graphs generated by the configuration model. Let $\mathbf{B}$ be the corresponding interaction matrix and $\pmb{\psi}^{\otimes 2}$ be the factor model associated with $\mathbf B \otimes \mathbf B$ (cf.~\Cref{remark:model_transform}).}
Let $\Psi^{(2)}_{\mathrm{Bethe}}$ be the Bethe rate function in \eqref{e:Bethe_local} of the factor model $\pmb{\psi}^{\otimes 2}$ on $(G_n)_n$. 
Then the limit $\Psi_2:=\lim_{n\to\infty} \frac{1}{n}\log \bbE[Z_n^2]$ exists and is given by  
\[
\Psi_2 = \sup_{\mathbf{x} \in \calP_{q^2 \times q^2}} \Psi_{\mathrm{Bethe}}^{(2)}(\mathbf{x}).
\]
  Moreover, the maximum is attained at an interior point of $\calP_{q^2\times q^2}$.
\end{proposition}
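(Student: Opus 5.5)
The plan is to reduce the statement to \Cref{proposition:rep_1} applied to a different factor model on the same graphs. First, by the computation preceding \Cref{rem:ferro_product}, for every realization of $G_n$ we have the deterministic identity
\[
Z_{n,\mathbf B}^2=Z_{n,\mathbf B\otimes\mathbf B},
\]
where the right-hand side is the partition function of the matrix model with alphabet $[q]^2$ and interaction matrix $\mathbf B\otimes\mathbf B$. Since $G_n$ is $d$-regular, \Cref{remark:model_transform} allows us to pass between the original factor model $\pmb\psi$ and its interaction matrix $\mathbf B$ without changing the partition function. The same remark, applied to $\mathbf B\otimes\mathbf B$ (with trivial vertex factor), shows that $Z_n^2$ is exactly the partition function of the factor model $\pmb\psi^{\otimes2}$ on $G_n$. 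Consequently $\bbE[Z_n^2]=\bbE[Z_{n,\pmb\psi^{\otimes 2}}]$, and $\frac1n\log\bbE[Z_n^2]$ is the finite-volume annealed pressure of $\pmb\psi^{\otimes2}$.

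Next I verify the hypotheses of \Cref{proposition:rep_1} for $\pmb\psi^{\otimes2}$. This is a factor model in the sense of \Cref{d:factor} on the finite alphabet $\calX=[q]^2$ (so $q$ is replaced by $q^2$).
\begin{itemize}
\item Its pair factor $(\mathbf B\otimes\mathbf B)_{(a,b),(c,e)}=B_{ac}B_{be}$ is strictly positive, because $\mathbf B$ has strictly positive entries. These entries come from $\psi(\bar\psi\bar\psi)^{1/d}>0$ when $\mathbf B$ is obtained from $\pmb\psi$.
\item The pair factor is symmetric, since $\mathbf B$ is symmetric.
\item The vertex factor is $\bar\psi\equiv1>0$.
\end{itemize}
Notably, ferromagneticity (\Cref{rem:ferro_product}) is not needed here, because \Cref{proposition:rep_1} covers arbitrary positive factor models. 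Applying \Cref{proposition:rep_1} then shows that the limit of the annealed pressure of $\pmb\psi^{\otimes2}$ exists and equals $\sup_{\mathbf x\in\calP_{q^2\times q^2}}\Psi^{(2)}_{\mathrm{Bethe}}(\mathbf x)$. By the first step this limit is $\Psi_2$, and the interiority of the maximizer is the last assertion of \Cref{proposition:rep_1}.

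The only step needing care is the bookkeeping in the transformation. $\Psi^{(2)}_{\mathrm{Bethe}}$ must be computed with the specification $(\psi^{(2)},\bar\psi^{(2)})=(\mathbf B\otimes\mathbf B,1)$, or with any equivalent specification that yields the same partition function on $d$-regular graphs. Two such specifications give the same $\Psi_{\mathrm{Bethe}}$ on $d$-regular graphs: by \eqref{eq_marg_reg}, $\langle\log\bar\psi\rangle_{\boldsymbol\alpha}=\frac d2\langle \tfrac1d(\log\bar\psi(i)+\log\bar\psi(j))\rangle_{\mathbf x}$, so the vertex term can be moved into the edge term. Hence the choice of representation does not affect the supremum, and there is no real obstacle beyond this verification.
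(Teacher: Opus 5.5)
Your proposal is correct and follows the paper's own proof: $Z_n^2$ is the partition function of the paired model with interaction matrix $\mathbf B\otimes\mathbf B$ (strictly positive and symmetric) on the same graph, so \Cref{proposition:rep_1} applied to $\pmb\psi^{\otimes 2}$ on the alphabet $[q]^2$ gives existence of the limit, the variational formula and interiority of the maximizer. One small overstatement in your closing remark: not every specification with the same partition function yields the same $\Psi_{\mathrm{Bethe}}$ (a relabeling of the spins permutes it), but you only need invariance of the supremum, and the statement already fixes the specification $(\mathbf B\otimes\mathbf B,1)$.
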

\begin{proof} This is a direct consequence of the preceding observation and \Cref{proposition:rep_1}.
\end{proof}

Next, recall that given a ferromagnetic interaction matrix $\mathbf{B}$,  $\hat{\mathbf{B}} $ denotes any matrix such that $\mathbf{B} = \hat{\mathbf{B}}^T\hat{\mathbf{B}}$. Using \Cref{rem:ferro_product}, we state the representation of $\Psi_2$ corresponding to \Cref{t:rep_3}: 

\begin{proposition}[Matrix-norm representation of $\Psi_2$] \label{p:secondmoment_BIS}
    Let $d \ge 2$. For any ferromagnetic model on a {sequence of} random $d$-regular graphs with interaction matrix $\mathbf{B}$, 
    $\Psi_2$ exists and is given by 
	\begin{equation}
		\Psi_2 = \max_{\boldsymbol{u} \in [0,\infty)^{q^2}\backslash \{0\}} \Phi_{{\mathbf{B}^{\otimes 2}}}(\boldsymbol{u}).
	\end{equation}
	Here, $\Phi_{{\mathbf{B}^{\otimes 2}}}$ denotes the function $\Phi$ defined in \eqref{e:magic} with interaction matrix $\mathbf{B}^{\otimes 2}$. 
	In particular, 
	\begin{equation} \label{e:nice_sec_mo}
		\Psi_2= {d\log || \hat{\mathbf{B}} \otimes \hat{\mathbf{B}}||_{\frac{d}{d-1}\to 2} = 2d\log || \hat{\mathbf{B}} ||_{\frac{d}{d-1}\to 2}}
	\end{equation}
\end{proposition}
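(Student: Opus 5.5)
The plan is to reduce the statement to \Cref{t:rep_3} applied to the pair model, and then prove a multiplicativity property of the induced norm $\|\cdot\|_{p\to 2}$, where $p:=d/(d-1)\in(1,2]$.

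\textbf{Step 1: reduce to the pair model.} By the computation preceding \Cref{rem:ferro_product}, $Z_n^2$ is the partition function of the matrix model with spin set $[q]^2$ and interaction matrix $\mathbf{B}\otimes\mathbf{B}$. This matrix has strictly positive entries. By \Cref{rem:ferro_product} it is positive semidefinite, so it is ferromagnetic in the sense of \Cref{d:ferro}. Applying \Cref{t:rep_3} to this model (with $q^2$ in place of $q$) gives existence of $\Psi_2$ and the first displayed identity.

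For the first equality in \eqref{e:nice_sec_mo}, the mixed-product rule gives
\[
(\hat{\mathbf{B}}\otimes\hat{\mathbf{B}})^T(\hat{\mathbf{B}}\otimes\hat{\mathbf{B}})=(\hat{\mathbf{B}}^T\hat{\mathbf{B}})\otimes(\hat{\mathbf{B}}^T\hat{\mathbf{B}})=\mathbf{B}\otimes\mathbf{B}.
\]
So $\hat{\mathbf{B}}\otimes\hat{\mathbf{B}}$ is an admissible square root, and \eqref{e:annealed_norm} yields $\Psi_2=d\log\|\hat{\mathbf{B}}\otimes\hat{\mathbf{B}}\|_{p\to2}$.

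There is one small consistency point. The proof of \Cref{t:rep_3} maximizes over $\boldsymbol{u}\ge 0$, while $\|\cdot\|_{p\to 2}$ is a supremum over all real vectors. These agree because the entries of $\mathbf{B}$ (and of $\mathbf{B}\otimes\mathbf{B}$) are positive. Hence $\langle|\boldsymbol{z}|,\mathbf{B}|\boldsymbol{z}|\rangle\ge\langle \boldsymbol{z},\mathbf{B}\boldsymbol{z}\rangle$, while the $p$-norm is unchanged when $\boldsymbol{z}$ is replaced by $|\boldsymbol{z}|$. So the supremum is attained on nonnegative vectors.

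\textbf{Step 2: the identity $\|\hat{\mathbf{B}}\otimes\hat{\mathbf{B}}\|_{p\to2}=\|\hat{\mathbf{B}}\|_{p\to2}^2$.}

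The lower bound uses product vectors. Take $\boldsymbol{z}=\boldsymbol{u}\otimes\boldsymbol{v}$. Then $\|\boldsymbol{u}\otimes\boldsymbol{v}\|_r=\|\boldsymbol{u}\|_r\|\boldsymbol{v}\|_r$ for every $r$, and $(\mathbf{A}\otimes\mathbf{C})(\boldsymbol{u}\otimes\boldsymbol{v})=\mathbf{A}\boldsymbol{u}\otimes\mathbf{C}\boldsymbol{v}$. Taking near-optimal $\boldsymbol{u},\boldsymbol{v}$ gives $\|\mathbf{A}\otimes\mathbf{C}\|_{p\to 2}\ge\|\mathbf{A}\|_{p\to2}\|\mathbf{C}\|_{p\to2}$.

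The upper bound is the step I expect to be the main obstacle, since induced norms with $p\neq 2$ are not multiplicative in general. My plan is to use $p\le 2$ through Minkowski's integral inequality. Identify $\boldsymbol{z}\in\bbR^{[q]^2}$ with a matrix $\mathbf{Z}=(z_{ab})$ with rows $\boldsymbol{z}_a$. Then $(\mathbf{A}\otimes\mathbf{C})\boldsymbol{z}$ corresponds to $\mathbf{A}\mathbf{W}$, where $\mathbf{W}$ has rows $\boldsymbol{w}_a=\mathbf{C}\boldsymbol{z}_a$. Applying the bound for $\mathbf{A}$ column by column gives
\[
\|\mathbf{A}\mathbf{W}\|_F^2=\sum_b\|\mathbf{A}\mathbf{W}_{\cdot b}\|_2^2\le\|\mathbf{A}\|_{p\to2}^2\sum_b\|\mathbf{W}_{\cdot b}\|_p^2 .
\]
Since $2/p\ge1$, Minkowski's inequality lets us swap the mixed norms: the $\ell_2$-norm over $b$ of the $\ell_p$-norm over $a$ is at most the $\ell_p$-norm over $a$ of the $\ell_2$-norm over $b$. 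This bounds the right-hand side by $\|\mathbf{A}\|_{p\to2}^2\bigl(\sum_a\|\mathbf{C}\boldsymbol{z}_a\|_2^p\bigr)^{2/p}$.

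Finally, apply the bound for $\mathbf{C}$ row by row:
\[
\Bigl(\sum_a\|\mathbf{C}\boldsymbol{z}_a\|_2^p\Bigr)^{1/p}\le\|\mathbf{C}\|_{p\to2}\Bigl(\sum_a\|\boldsymbol{z}_a\|_p^p\Bigr)^{1/p}=\|\mathbf{C}\|_{p\to2}\|\boldsymbol{z}\|_p .
\]
Combining this with the previous display and taking $\mathbf{A}=\mathbf{C}=\hat{\mathbf{B}}$ gives the upper bound. Together with the lower bound, this yields $\Psi_2=2d\log\|\hat{\mathbf{B}}\|_{p\to2}$, and hence $\Psi_2=2\varphi^{\mathrm{an}}$ by \eqref{e:annealed_norm}.
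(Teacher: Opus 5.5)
Your proof is correct and follows the same route as the paper. You realize $Z_n^2$ as the partition function of the strictly positive ferromagnetic pair model with matrix $\mathbf{B}\otimes\mathbf{B}$, apply \Cref{t:rep_3}, use the mixed-product rule to see that $\hat{\mathbf{B}}\otimes\hat{\mathbf{B}}$ is an admissible square root, and conclude from $\|\hat{\mathbf{B}}\otimes\hat{\mathbf{B}}\|_{p\to2}=\|\hat{\mathbf{B}}\|_{p\to2}^2$.

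You differ from the paper in two small ways. First, the paper simply cites Bennett's tensor-product result for this norm identity, while you prove it directly: product vectors give the lower bound, and Minkowski's mixed-norm inequality with $p=d/(d-1)\le 2$ gives the upper bound. Second, you use positivity of the entries of $\mathbf{B}$ to check that the maximum over nonnegative vectors in \Cref{t:rep_3} equals the induced norm over all real vectors, a step the paper leaves implicit.
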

\begin{proof}
{This again follows from the previous observation and \Cref{t:rep_3}, using \Cref{rem:ferro_product} and the fact that $\hat{\mathbf{B}} \otimes \hat{\mathbf{B}}$ satisfies 
\begin{align*}
    \left(\hat{\mathbf{B}} \otimes \hat{\mathbf{B}}\right)^T \left(\hat{\mathbf{B}} \otimes \hat{\mathbf{B}}\right) = \mathbf{B} \otimes \mathbf{B}
\end{align*}
 and $|| \hat{\mathbf{B}} \otimes \hat{\mathbf{B}}||_{\frac{d}{d-1} \to 2}  = ||\hat{\mathbf{B}} ||_{\frac{d}{d-1} \to 2}^2$ (e.g., via  Bennett's tensor-product result \cite[Proposition~10.4]{Bennett}).}
\end{proof}

\subsection{{Quenched-annealed agreement and second moments}}\label[subsection]{subsection:regular_compare}

{Having focused in the previous section on representations of the annealed pressure, we now turn to its relation with the quenched pressure (recall~\Cref{d:ppp}). A striking feature of the ferromagnetic Potts model on random $d$-regular graphs is that its quenched and annealed pressures coincide throughout the parameter regime.  In \Cref{subsubsection:quenched_equals_anneealed}, we first review existing results on quenched--annealed agreement. Then, in \Cref{subsubsection:second_moment}, we give a direct proof based on the second-moment criterion in \Cref{proposition:Frieze}. Combined with concentration of the quenched pressure, this relation yields a direct second-moment proof of quenched--annealed agreement for ferromagnetic models on random $d$-regular graphs.}


\subsubsection{Quenched pressure equals annealed}\label{subsubsection:quenched_equals_anneealed}

Several proofs of the agreement of the quenched and annealed pressures on regular graphs are available, based on different methods and applying at different levels of generality. In this section, we summarize their common conclusion for the Potts model in \Cref{p:q_is_a} and subsequently review approaches based on the Bethe prediction, small subgraph conditioning, and the random-cluster representation. 

\begin{proposition}[ {\cite{Dembo,Dembo2,Sun,can,BIS,Hungarian,Remco_rcm}} ]\label{p:q_is_a}
{Let $\beta,B\geq 0$, $d\geq 2$, and $q\geq 2$. For the $q$-state Potts model on a random $d$-regular graph, the thermodynamic limits of the annealed and quenched pressures exist. Denoting these limits by $\varphi^{\mathrm{an}-\mathrm{Potts}}(d,q,\beta,B)$ and $\varphi^{\mathrm{qu}-\mathrm{Potts}}(d,q,\beta,B)$, respectively, it holds true that}
	\begin{equation}\label{e:Potts}
		\varphi^{\mathrm{an}-\mathrm{Potts}} (d,q,\beta,B)=\varphi^{\mathrm{qu}-\mathrm{Potts}}(d,q,\beta,B).
	\end{equation}
\end{proposition}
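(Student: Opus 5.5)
The plan is to deduce \eqref{e:Potts} from Frieze's trick, \Cref{proposition:Frieze}, on the configuration model $\mathrm{CM}_n(d)$. This requires three inputs: existence of $\varphi^{\mathrm{an}}$, existence of $\Psi_2$ with $\Psi_2=2\varphi^{\mathrm{an}}$, and the exponential concentration \eqref{e:concentration}.

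\textbf{Step 1: reduce to a ferromagnetic matrix model.} By \Cref{example:concentration}, on $d$-regular graphs the Potts model with parameters $\beta,B\ge0$ is the matrix model with interaction matrix $\mathbf{B}_B=\mathbf{D}^T\mathbf{B}\mathbf{D}$. This matrix is positive semidefinite with strictly positive entries, so the matrix model is ferromagnetic in the sense of \Cref{d:ferro}. Write $\mathbf{B}_B=\hat{\mathbf B}^T\hat{\mathbf B}$.

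\textbf{Step 2: existence of the limits and the second-moment identity.} \Cref{t:rep_3} gives that $\varphi^{\mathrm{an}}$ exists and equals $d\log\|\hat{\mathbf B}\|_{p\to2}$, where $p=d/(d-1)$. Since $Z_n^2$ is the partition function of the pair model with matrix $\mathbf{B}_B\otimes\mathbf{B}_B$, and this model is ferromagnetic by \Cref{rem:ferro_product}, \Cref{p:secondmoment_BIS} gives that $\Psi_2$ exists and equals $2d\log\|\hat{\mathbf B}\|_{p\to2}$. Hence $\Psi_2=2\varphi^{\mathrm{an}}$.

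The substantive ingredient is the multiplicativity $\|\hat{\mathbf B}\otimes\hat{\mathbf B}\|_{p\to2}=\|\hat{\mathbf B}\|_{p\to2}^2$, and this is the step I regard as the main obstacle. The inequality "$\ge$" follows by testing on product vectors $\boldsymbol u\otimes\boldsymbol u$. For "$\le$", I would invoke Bennett's result for $p\le 2$, which is exactly our range since $p=d/(d-1)\in(1,2]$. A self-contained argument would factor $\hat{\mathbf B}\otimes\hat{\mathbf B}=(\hat{\mathbf B}\otimes \mathbf I)(\mathbf I\otimes\hat{\mathbf B})$. One then bounds $\mathbf I\otimes\hat{\mathbf B}$ as a map $\ell^p(\ell^p)\to\ell^p(\ell^2)$ and $\hat{\mathbf B}\otimes\mathbf I$ as a map $\ell^p(\ell^2)\to\ell^2(\ell^2)$. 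The second bound uses Minkowski's integral inequality, which needs $p\le2$.

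The maximization in \Cref{t:rep_3} is over nonnegative $\boldsymbol u$, while the induced norm is defined over all vectors. These agree for matrices with nonnegative entries such as $\mathbf{B}_B\otimes\mathbf{B}_B$, because $|\boldsymbol u^T\mathbf{B}\boldsymbol u|\le|\boldsymbol u|^T\mathbf{B}|\boldsymbol u|$. So I would state the identity at the level of $\max_{\boldsymbol u\ge0}\Phi$ to avoid this issue.

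\textbf{Step 3: concentration.} \eqref{e:concentration} for Potts on $\mathrm{CM}_n(d)$ is \Cref{ex:Potts_con}: Azuma--Hoeffding with bounded differences $2\beta$ over $dn/2$ steps gives $\bbP(|\log Z_n-\bbE\log Z_n|>\delta n)\le 2\e^{-\delta^2 n/(4d\beta^2)}$. Applying \Cref{proposition:Frieze} then yields $\varphi^{\mathrm{qu}}=\varphi^{\mathrm{r\text{-}qu}}=\varphi^{\mathrm{an}}$ for $\mathrm{CM}_n(d)$.

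\textbf{Step 4 (uniform simple graph).} Let $S_n$ be the event that $\mathrm{CM}_n(d)$ is simple, so $\bbP(S_n)\to c_d>0$. First, $\bbE[Z_n\mid S_n]\le \bbE[Z_n]/\bbP(S_n)$, so the annealed pressure of the simple graph is at most that of $\mathrm{CM}_n(d)$. Second, concentration transfers to the simple graph, because exponentially small probabilities stay exponentially small after conditioning on an event of probability bounded below. Finally, with probability tending to one under $\mathrm{CM}_n(d)$, and hence also given $S_n$, we have $\frac1n\log Z_n\ge\varphi^{\mathrm{an}}-\ve$. Together with Jensen's inequality, these squeeze both the quenched and annealed pressures of the simple graph to $\varphi^{\mathrm{an}}$.
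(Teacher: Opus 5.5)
Your proposal is correct and is essentially the paper's own second-moment proof of this statement. That proof is Theorem~\ref{t:main1} applied to the Potts matrix $\mathbf{B}_B$ of Example~\ref{example:concentration}: $\Psi_2=2\varphi^{\mathrm{an}}$ from the matrix-norm representations with Bennett's multiplicativity, concentration from Example~\ref{ex:Potts_con}, then Frieze's trick, followed by the transfer in Theorem~\ref{t:main2}; it is not the Bethe-prediction sandwich $\varphi^{\mathrm{an}}=\Phi^*_{\mathrm{Bethe}}\le\varphi^{\mathrm{qu}}\le\varphi^{\mathrm{an}}$ from the cited literature, which the paper only sketches. Your extra details are all sound: the restriction to $\boldsymbol u\ge 0$ via entrywise nonnegativity, the Minkowski argument that needs $p\le 2$, and the direct Jensen squeeze for the simple graph, which replaces re-running Frieze's trick with $\Psi_{2,\mathrm{UG}}$.
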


	The Ising part $(q=2)$ is shown in \cite[Proposition 3.2]{can} (explained in more detail in  \Cref{s:ising models}) and also in \cite{Dembo,Dembo2,Sun}, as explained in the discussion below \Cref{theorem:bethe_variational}. The equality \eqref{e:Potts} is proved in three different lines of research. Dembo, Montanari and Sun~\cite{Dembo2,Sun} use Bethe interpolation, Galanis et al.~\cite{BIS} prove this for a general ferromagnetic model by showing the existence of stationary points with a negative definite Hessian, and Can and van der Hofstad~\cite[Corollary 1.15]{Remco_rcm} prove it again by exploiting the connection between the random-cluster partition function and that of the Potts model \cite[Lemma 1.1]{Remco_rcm}.

The two pressures are naturally approached in different ways. The quenched pressure describes the behavior of the partition function for a typical graph realization. For locally tree-like graph sequences (see \Cref{s:local_def}), local weak convergence provides a useful framework for studying its thermodynamic limit, while on specific random graph models, methods such as small subgraph conditioning can provide finer information about its relation to the annealed pressure. The annealed pressure, by contrast, involves averaging the partition function over the random graph and may therefore depend on features of the graph distribution that are not captured by the local limit. On random regular graphs, it admits the variational representations reviewed in the previous section. Although both pressures admit variational characterizations in the settings considered below, these characterizations generally take rather different forms, and their equality is not transparent from the formulas alone.

{Although our focus is on random $d$-regular graphs, some of the results reviewed below concerning the quenched pressure apply more generally to locally tree-like graph sequences. We state these results in the form needed here and indicate their wider scope where relevant. We also note a difference in terminology: in \cite{Dembo2,Sun}, the normalized logarithm of the partition function is referred to as the \emph{free energy density}, whereas throughout this paper we call this quantity the \emph{pressure}. We nevertheless retain the standard terms \emph{Bethe free-energy functional} and \emph{Bethe free-energy prediction} below.}

\paragraph{\bf Bethe free energy prediction.} 
{The first approach compares two different variational formulations. The Bethe prediction for the quenched pressure is expressed in terms of the Bethe free-energy functional evaluated at fixed points of the belief-propagation recursion, whereas the annealed pressure on random $d$-regular graphs admits the variational representation over empirical spin and edge distributions established in \Cref{proposition:rep_1}. Although these formulations involve apparently different optimization problems, Dembo, Montanari, Sly and Sun~\cite{Dembo2,Sun} establish a correspondence between them and show that their optimal values agree. This allows results comparing the quenched pressure with the Bethe prediction to be combined with the annealed variational formula. We follow the formulation of \cite{Sun} for the general factor model $(\psi,\bar{\psi})$ introduced in \Cref{d:factor}.}

\begin{definition}[{Belief propagation recursion} {\cite[Definition 1.3]{Sun}}]\label{d:BP} 
	The \textit{belief propagation} map $\mathsf{BP}: \calP_q\to \calP_q$ {associated with the factor model on} the infinite $d$-regular tree is defined as
\begin{equation}
  (\mathsf{BP} \boldsymbol{h}) (\sigma):= \frac{1}{z_{\boldsymbol{h}}} \bar{\psi}(\sigma) 
  \left( \sum_{\sigma'}\psi(\sigma,\sigma')  \boldsymbol{h} (\sigma')\right)^{d-1}, \qquad \boldsymbol{h} \in \calP_q, \sigma\in [q],
\end{equation}
	where $z_{\boldsymbol{h}}$ is the normalizing constant that makes the right-hand side a probability measure, i.e.,
	\begin{equation}
		z_{\boldsymbol{h}}= \sum_{\sigma\in [q]} \bar{\psi}(\sigma)	\left( \sum_{\sigma'}\psi(\sigma,\sigma')  \boldsymbol{h} (\sigma')\right)^{d-1}.
	\end{equation}
\end{definition}

{
A sequence of random $d$-regular graphs converges locally to the infinite $d$-regular tree. The belief-propagation recursion is motivated by a consistency relation for marginals of the factor model on the latter. After removing one edge incident to a vertex, the remaining component rooted at that vertex has $d-1$ branches. Conditional on the spin at the root, the contributions from these branches factorize, leading to the map $\mathsf{BP}$ above. Thus, if $\boldsymbol{h}$ describes the same cavity marginal on each branch, then consistency with the recursive structure of the tree requires
\( \mathsf{BP}\boldsymbol{h}=\boldsymbol{h}. \) }
{
The Bethe free-energy functional introduced below is parametrized by probability distributions $\boldsymbol{h}\in\calP_q$, interpreted as candidate cavity marginals.
}

\begin{definition}[Bethe free energy functional {\cite[Definition 1.5]{Sun}}] \label{d:Bethe_free}The \textit{Bethe free energy functional} of the factor model on the infinite $d$-regular tree is the map $\Phi_{\mathrm{Bethe}}:\calP_q\to \bbR$ defined by 
\begin{equation}
  \Phi_{\mathrm{Bethe}}(\boldsymbol{h})  :=  \log \left[ \sum_\sigma \bar{\psi}(\sigma )\left( \sum_{\sigma'}\psi(\sigma, \sigma')\boldsymbol{h}(\sigma') \right)^d\right] - \frac{d}{2}\log \left( \sum_{\sigma, \sigma'} \psi(\sigma , \sigma') \boldsymbol{h}(\sigma)\boldsymbol{h}(\sigma')\right) .
\end{equation}
\end{definition}
The \emph{Bethe prediction} for the quenched pressure is obtained by restricting to those $\boldsymbol{h}$ that satisfy the fixed-point equation:
\begin{equation}
	\Phi_{\mathrm{Bethe}}^* :=  \sup \{ \Phi_{\mathrm{Bethe}}(\boldsymbol{h}): \boldsymbol{h} \text{ is a fixed point of } \mathsf{BP} \}.
\end{equation} 
We say that the factor model on a sequence of random graphs $(G_n)_n$ \textit{satisfies the Bethe prediction} if the thermodynamic limit $\varphi^{\mathrm{qu}}$ exists and satisfies \cite[(7)]{Sun}
\begin{equation}
	 \varphi^{\text{qu}}=  \Phi_{\mathrm{Bethe}}^* . 
\end{equation}

In \Cref{proposition:rep_1}, we expressed the annealed pressure on random $d$-regular graphs as a variational problem over empirical spin and edge distributions. The following result identifies the value of this variational problem with the Bethe prediction:

\begin{theorem}[{ \cite[Proposition 1.7]{Sun}}]  \label{theorem:bethe_variational}
	 For factor models on random $d$-regular graphs, the corresponding Bethe variational problems have the same optimal value: 
	\[
	 \Phi_{\mathrm{Bethe}}^*  
     = \sup_{\mathbf{x} \in \calP_{q \times q}} \Psi_{\mathrm{Bethe}}(\mathbf{x}).
	\]
	Moreover, the local maxima (which are interior points) of $\Psi_{\mathrm{Bethe}}$ are in one-to-one correspondence with the fixed points of the belief-propagation map $\mathsf{BP}$. 
\end{theorem}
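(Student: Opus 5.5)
The plan is to connect the two variational problems through their stationary points, exactly as \Cref{lemma:galanis} connected $\varphi$ and $\Phi$, and then compare values at corresponding points. Let $\mathbf{x}$ be an interior local maximum of $\Psi_{\mathrm{Bethe}}$. Interior local maxima exist by the argument in the proof of \Cref{proposition:rep_1}. I will write down the Lagrange conditions for maximizing $\Psi_{\mathrm{Bethe}}$ over $\calP_{q\times q}$, with multipliers for the marginal constraints $\alpha_i=\sum_j x_{ij}$ and for the normalization. The substitution $u_i=\exp(-(\lambda_i+\mu)/d-1/2)$ from the proof of \Cref{lemma:galanis}, carrying the extra factor $\bar\psi$, should give
\[
x_{ij}=\frac{\psi(i,j)\,u_iu_j}{\sum_{k,l}\psi(k,l)u_ku_l},\qquad \alpha_i\propto \bar\psi(i)^{p'} u_i^{p}\ \text{(suitably)},\qquad \sum_j x_{ij}=\alpha_i,
\]
where $p=d/(d-1)$ and $p'$ is an exponent to be determined. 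Eliminating $\boldsymbol{\alpha}$ then yields a fixed-point relation of the form $u_i\propto \bar\psi(i)\bigl(\sum_j\psi(i,j)u_j\bigr)^{d-1}$, up to a power. Normalizing, $\boldsymbol{h}:=\boldsymbol{u}/\sum_j u_j$ is precisely a fixed point of $\mathsf{BP}$, in line with the remark after \Cref{lemma:galanis}. Conversely, given a BP fixed point $\boldsymbol{h}$, I will define
\[
x_{ij}\propto\psi(i,j)h_ih_j
\]
and check the Lagrange system directly. The marginal identity follows from the fixed-point equation, since $\sum_j\psi(i,j)h_j\propto(h_i/\bar\psi(i))^{1/(d-1)}$. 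Together these two directions give the claimed one-to-one correspondence, once I note that strict concavity of the inner problem in $\mathbf{x}$ for fixed $\boldsymbol{\alpha}$ (\Cref{remark:equal_Bethe}) makes $\mathbf{x}$ uniquely determined by $\boldsymbol{h}$.

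Next I will evaluate $\Psi_{\mathrm{Bethe}}$ at a stationary $\mathbf{x}$ and show it equals $\Phi_{\mathrm{Bethe}}(\boldsymbol{h})$. This is a substitution computation modeled on the last display in the proof of \Cref{lemma:galanis}. Substitute $\log x_{ij}=\log\psi(i,j)+\log h_i+\log h_j-\log Z_e$ with $Z_e=\sum\psi h h$, and $\log\alpha_i$ from the fixed-point form. The energy terms $\langle\log\psi\rangle_{\mathbf{x}}$ and $\langle\log\bar\psi\rangle_{\boldsymbol{\alpha}}$ then cancel against the entropy terms. What remains should be $\log Z_v-\frac d2\log Z_e$, where $Z_v=\sum_\sigma\bar\psi(\sigma)\bigl(\sum_{\sigma'}\psi h\bigr)^d$, because $\alpha_\sigma\propto h_\sigma\sum_{\sigma'}\psi(\sigma,\sigma')h_{\sigma'}$ has normalizer proportional to $Z_v$.

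The supremum statement then follows. By compactness and interiority, $\sup\Psi_{\mathrm{Bethe}}$ is attained at an interior point, which is a stationary point and hence corresponds to a BP fixed point; this gives $\sup\Psi_{\mathrm{Bethe}}\le\Phi^*_{\mathrm{Bethe}}$. Conversely, every BP fixed point yields a feasible $\mathbf{x}$ with the same value, so $\Phi^*_{\mathrm{Bethe}}\le\sup\Psi_{\mathrm{Bethe}}$.

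The main obstacle is the precise statement of the correspondence. Stationary points of $\Psi_{\mathrm{Bethe}}$ correspond to all BP fixed points, while the theorem speaks of local maxima. I would first try to prove the bijection at the level of stationary points and fixed points, which is all that the value identity needs. I would then treat the refinement to local maxima as a statement about the Hessian, invoking \cite[Proposition 1.7]{Sun} for that part rather than reproving the second-order analysis. Keeping track of the exponents of $\bar\psi$ in the substitution is the other fiddly point, but it is routine.
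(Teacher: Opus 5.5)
Your argument is correct and follows the natural route. The paper gives no proof here and only cites \cite[Proposition~1.7]{Sun}, but your computation is exactly the proof of \Cref{lemma:galanis} with $\bar\psi$ carried along. Three details tighten it:
\begin{itemize}
\item The $\alpha_i$-equation gives $p'=-1/(d-1)$. After that, $u_i\propto\bar\psi(i)\bigl(\sum_j\psi(i,j)u_j\bigr)^{d-1}$ holds exactly, not just up to a power.
\item The value identity closes via $Z_e=\sum_i h_i\sum_j\psi(i,j)h_j=Z_v/z_{\boldsymbol h}$, which follows from the fixed-point equation.
\item Injectivity needs no concavity, since $h_i\propto\sqrt{x_{ii}/\psi(i,i)}$ inverts $\boldsymbol h\mapsto\mathbf x$ directly.
\end{itemize}

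The step to change is your plan to import a second-order (Hessian) statement from \cite{Sun} for the clause on local maxima. What your computation proves, and what is true, is a bijection between \emph{interior stationary points} of $\Psi_{\mathrm{Bethe}}$ and fixed points of $\mathsf{BP}$. This bijection does not restrict to one between local maxima and \emph{all} fixed points, so no Hessian analysis can deliver the literal clause.

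Here is a counterexample. Take the zero-field model \eqref{def_spec_Potts} with $q=2$ and $d\ge 3$. The uniform vector is a $\mathsf{BP}$ fixed point for every $\beta$. With $\varphi$ from \eqref{e:objective}, a direct computation along $\boldsymbol\alpha=(t,1-t)$ gives $\partial_t^2\varphi(t,1-t)\big|_{t=1/2}=2(d-2)-2d\,\e^{-\beta}$. This is positive once $\e^\beta>d/(d-2)$, and then the corresponding $\mathbf x$ is a saddle of $\Psi_{\mathrm{Bethe}}$ rather than a local maximum.

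So the correct content of the last sentence is this: local maxima are interior, hence stationary, hence correspond injectively to $\mathsf{BP}$ fixed points. You already have everything needed for that, and nothing should be deferred.

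One caution on the interiority input you borrow from the proof of \Cref{proposition:rep_1}. Suppose a whole row $i$ of $\mathbf x$ vanishes, and row and column $i$ are the only zeros. Then the curve there, which raises all zero entries to $t$, changes $\Psi_{\mathrm{Bethe}}$ by $(q-\tfrac d2)(-t\log t)+O(t)$. This is negative for $d>2q$, so that curve does not rule out a boundary local maximum. Instead, move mass $t$ into a single pair $(i,j),(j,i)$ with $\alpha_j>0$, taking it from a positive entry. This yields a gain of $-t\log t+O(t)>0$ and restores the interiority you need.
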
 

{A more general version of this variational correspondence is given in \cite[Theorem~1.18]{Dembo2} for locally tree-like graphs. The correspondence itself is a property of the factor model on the limiting tree and does not identify the limiting quenched pressure of a particular graph sequence. However, in the random $d$-regular setting considered here, \Cref{proposition:rep_1} and \Cref{theorem:bethe_variational} imply that \begin{equation}\label{e:annealed_equals_Bethe} \varphi^{\mathrm{an}}=\Phi_{\mathrm{Bethe}}^*. \end{equation} It remains to compare the quenched pressure with $\Phi_{\mathrm{Bethe}}^*$.}

For the Ising model ($q=2$), Dembo, Montanari and Sun~\cite[Theorem~1.9]{Dembo2} verify the Bethe prediction for all $\beta,B\geq0$. Hence,
 by \Cref{theorem:bethe_variational}, we get 
\[
\varphi^{\mathrm{qu}-\mathrm{Ising}}(\beta,B)
=\varphi^{\mathrm{an}-\mathrm{Ising}}(\beta,B),
\quad \beta,B\ge 0.
\]
For integer $q\geq3$, Dembo, Montanari and Sun~\cite[Theorem~1.10]{Dembo2} only establish that the Bethe prediction is a lower bound for the quenched pressure:
\[
   \Phi^*_{\mathrm{Bethe}} \le \varphi^{\mathrm{qu}-\mathrm{Potts}}.
\]
However, in combination with \Cref{theorem:bethe_variational} and Jensen's inequality, for random regular graphs, this answers the agreement problem between the quenched and annealed Potts models:
\[
 \varphi^{\mathrm{an}-\mathrm{Potts}}
 =\Phi^*_{\mathrm{Bethe}}
 \le\varphi^{\mathrm{qu}-\mathrm{Potts}}
 \le\varphi^{\mathrm{an}-\mathrm{Potts}}.
\]

\paragraph{\bf Small subgraph conditioning.} A second approach, developed by Galanis et al.~\cite{BIS}, combines a second-moment analysis with the small subgraph conditioning technique. Unlike the preceding approach, it applies to general ferromagnetic matrix models on random $d$-regular graphs, subject to a nondegeneracy condition on a maximizing empirical spin distribution. 

Recall the function $\varphi$ defined in \eqref{e:objective}. If $Z_n(\boldsymbol{\alpha})$ denotes the contribution to the partition function from configurations with empirical spin distribution $\boldsymbol{\alpha}$, then $\varphi(\boldsymbol{\alpha})$ is the exponential growth rate of the expected contribution from configurations with empirical spin distribution $\boldsymbol{\alpha}$:
\[ \bbE[Z_n(\boldsymbol{\alpha})] = \e^{n\varphi(\boldsymbol{\alpha})+o(n)}. \]

Following \cite{BIS}, a maximizer $\boldsymbol{\alpha}$ of $\varphi$ is called a \emph{dominant phase}. Such a phase is called \emph{Hessian dominant} if the Hessian of $\varphi$, restricted to the tangent space of the simplex $\calP_q$, is negative definite at $\boldsymbol{\alpha}$. 

For a dominant phase $\boldsymbol{\alpha}$, Galanis et al.~\cite[Theorem~7]{BIS} show that the second moment of $Z_n(\boldsymbol{\alpha})$ has the same exponential growth rate as the square of its first moment. This agreement at exponential order alone does not imply that $Z_n(\boldsymbol{\alpha})$ is close to its expectation with high probability. In particular, fluctuations in the numbers of short cycles can cause the ratio \[ \frac{\bbE[Z_n(\boldsymbol{\alpha})^2]} {\bbE[Z_n(\boldsymbol{\alpha})]^2} \] to remain bounded away from one. Small subgraph conditioning accounts for these fluctuations by conditioning on the numbers of short cycles in the random graph. Under the Hessian-dominance assumption, it yields a high-probability lower bound on $Z_n(\boldsymbol{\alpha})$ that differs from its expectation only by a subexponential factor. After taking the normalized logarithm, this factor disappears, so the contribution from a dominant phase attains the annealed pressure with high probability. This gives the following result:

\begin{theorem}[{\cite[Theorem~8]{BIS}}] \label{theorem:BIS_agreement} Let $d\geq3$, and consider a ferromagnetic matrix model on a random $d$-regular graph. If the function $\varphi$ defined in \eqref{e:objective} admits a Hessian dominant maximizer, then the limiting quenched and annealed pressures coincide: \[ \varphi^{\mathrm{qu}}=\varphi^{\mathrm{an}}. \] \end{theorem}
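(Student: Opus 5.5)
The plan is to avoid small subgraph conditioning and deduce the statement from Frieze's trick (\Cref{proposition:Frieze}). Its two hypotheses are the second-moment identity \eqref{e:second_moment_condition} and the concentration bound \eqref{e:concentration}, and both are within reach of results already stated. The Hessian-dominance assumption should then not be needed at all. Its role in \cite{BIS} is to control the fluctuations of the restricted partition function $Z_n(\boldsymbol{\alpha})$ at the level of constants, which is finer information than the exponential-scale statement we require.

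\textbf{Step 1 (second moment).} Existence of $\varphi^{\mathrm{an}}$ and the formula $\varphi^{\mathrm{an}}=d\log\|\hat{\mathbf B}\|_{\frac{d}{d-1}\to2}$ come from \Cref{t:rep_3}. By \Cref{rem:ferro_product}, $\mathbf B\otimes\mathbf B$ is again a ferromagnetic matrix model with strictly positive entries. \Cref{p:secondmoment_BIS} then gives existence of $\Psi_2$ together with
\[
\Psi_2=d\log\|\hat{\mathbf B}\otimes\hat{\mathbf B}\|_{\frac{d}{d-1}\to2}=2d\log\|\hat{\mathbf B}\|_{\frac{d}{d-1}\to2}=2\varphi^{\mathrm{an}}.
\]
The only nontrivial input here is the multiplicativity of the $p\to2$ norm under tensor products for $p=d/(d-1)\le 2$, as cited from \cite{Bennett}. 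If I wanted to avoid that citation, I would check it directly. The lower bound follows by testing on product vectors $\boldsymbol u\otimes\boldsymbol v$. For the upper bound, apply $\hat{\mathbf B}\otimes\mathbf I$ and $\mathbf I\otimes\hat{\mathbf B}$ in turn and use Minkowski's integral inequality, which is valid because $p\le2$. This is where I expect the main subtlety to lie.

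\textbf{Step 2 (concentration).} I would repeat the Doob-martingale argument of \Cref{ex:Potts_con} for a general matrix model on $\mathrm{CM}_n(d)$. Pairings are revealed one at a time, and the same coupling produces two graphs that differ in at most two edges. For every configuration, the Boltzmann weight changes by a factor in $[\rho^{-2},\rho^{2}]$, where $\rho:=\max_{ij}B_{ij}/\min_{ij}B_{ij}<\infty$; this is where strict positivity of $\mathbf B$ enters. Hence $|\log Z_n-\log Z_n'|\le 2\log\rho$ almost surely, and the martingale differences are bounded by this constant. There are $dn/2$ steps, so Azuma--Hoeffding yields \eqref{e:concentration} with $c(\delta)=\delta^2/(4d\log^2\rho)$.

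\textbf{Step 3 (conclusion).} With both hypotheses verified, \Cref{proposition:Frieze} gives $\varphi^{\mathrm{r\text{-}qu}}=\varphi^{\mathrm{qu}}=\varphi^{\mathrm{an}}$, which is the claim; the restriction $d\ge3$ is not used. As a fallback that follows \cite{BIS} more literally, I would instead restrict to a Hessian-dominant $\boldsymbol\alpha$. The Hessian condition gives a Laplace-method estimate $\bbE[Z_n(\boldsymbol\alpha)^2]=O(1)\,\bbE[Z_n(\boldsymbol\alpha)]^2$. Paley--Zygmund then gives $Z_n\ge Z_n(\boldsymbol\alpha)\ge \e^{n\varphi^{\mathrm{an}}-o(n)}$ with probability bounded below, and concentration (Step 2) upgrades this to almost every $n$.
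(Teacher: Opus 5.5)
Your argument is correct within the paper's conventions, but it is not how \Cref{theorem:BIS_agreement} itself is obtained. It is, step for step, the paper's proof of \Cref{t:main1}: the identity $\Psi_2=2\varphi^{\mathrm{an}}$ from \Cref{t:rep_3}, \Cref{rem:ferro_product} and tensor-multiplicativity of $\|\cdot\|_{p\to 2}$, plus the switching/Azuma concentration of \Cref{ex:Potts_con}, fed into \Cref{proposition:Frieze}. That argument does imply the statement here, because \Cref{d:ferro} builds strict positivity of $\mathbf{B}$ into the notion of a matrix model.

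For this theorem the paper only cites \cite[Theorem~8]{BIS}, whose proof never uses the unrestricted $Z_n$. It works with $Z_n(\boldsymbol{\alpha})$ at a Hessian-dominant phase and compares its second moment with the squared first moment. It then applies small subgraph conditioning on short-cycle counts to get a high-probability lower bound $Z_n(\boldsymbol{\alpha})\ge \bbE[Z_n(\boldsymbol{\alpha})]\,\mathrm{e}^{-o(n)}$, with no separate concentration step.

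The trade-off is the one the paper states after \Cref{t:main1}. Your route drops the Hessian hypothesis, covers $d=2$, and yields almost-sure convergence of $\varphi^{\mathrm{r\text{-}qu}}_n$. It pays with strict positivity of $\mathbf{B}$, exactly where you flag it, in $|\log Z_n-\log Z_n'|\le 2\log\rho$. The original result in \cite{BIS} allows non-negative ergodic $\mathbf{B}$ with zero entries, where that bound is no longer available. Small subgraph conditioning sidesteps concentration entirely, and that is what buys the extra generality, at the price of $d\ge 3$ and Hessian dominance. Your Minkowski-based check of $\|\hat{\mathbf{B}}\otimes\hat{\mathbf{B}}\|_{p\to 2}=\|\hat{\mathbf{B}}\|_{p\to 2}^2$ is sound; it is precisely where $p=d/(d-1)\le 2$ is needed.

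Your fallback skips a step. The Hessian condition on $\varphi$ alone does not give $\bbE[Z_n(\boldsymbol{\alpha})^2]=O(1)\,\bbE[Z_n(\boldsymbol{\alpha})]^2$. You first need the paired exponent, over pair distributions with both marginals equal to $\boldsymbol{\alpha}$, to be maximized at $\boldsymbol{\alpha}\otimes\boldsymbol{\alpha}$ with value $2\varphi(\boldsymbol{\alpha})$, which is the matrix-norm content of \cite[Theorem~7]{BIS}. In any case the fallback still rests on your Step~2, so it does not extend the scope beyond your main argument.
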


The argument does not apply directly when $d=2$, as the verification of the small subgraph conditioning assumptions involves a matrix that becomes singular in this case. In particular, the proof does not establish condition~(A4) of \cite[Theorem~37]{BIS} for $d=2$. 

For the Potts model, the phase-diagram analysis in \cite[Theorem~5]{BIS} identifies the dominant phases and determines when they are Hessian dominant. Combined with \Cref{theorem:BIS_agreement}, this yields quenched--annealed agreement for all $\beta,B\geq0$, except possibly for a single value of $\beta$, where the required Hessian nondegeneracy may fail. 

\begin{corollary}\label{theorem:Potts_BIS} For any $d\geq3$ and $q\geq3$, the quenched and annealed pressures of the $q$-state Potts model on a random $d$-regular graph coincide for all $\beta,B\geq0$, except possibly when $\beta=\beta_u$, the uniqueness critical point. \end{corollary}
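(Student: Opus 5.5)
The corollary follows by combining \Cref{theorem:BIS_agreement} with the Potts phase-diagram analysis of \cite[Theorem~5]{BIS}. Fix $d\ge3$, $q\ge3$ and $\beta,B\ge0$.

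\emph{Step 1: reduce to a ferromagnetic matrix model.} By \Cref{example:concentration}, on a $d$-regular graph the Potts model with field $B$ coincides with the matrix model with interaction matrix $\mathbf{B}_B=\mathbf{D}^T\mathbf{B}\mathbf{D}$. Its entries are strictly positive, and it is positive semidefinite by Sylvester's law of inertia. So it is ferromagnetic in the sense of \Cref{d:ferro}, and \Cref{theorem:BIS_agreement} applies. It remains only to produce a Hessian dominant maximizer of $\varphi$ from \eqref{e:objective}.

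\emph{Step 2: locate the dominant phases.} By \Cref{t:rep_3} and \Cref{lemma:galanis}, maximizers of $\varphi$ correspond to maximizers of $\Phi$, equivalently to fixed points $\boldsymbol{u}$ of \eqref{e:recursion}. For the Potts matrix these fixed points can be taken, up to the $S_q$ symmetry (or the $S_{q-1}$ symmetry fixing colour $1$ when $B>0$), of the form $\boldsymbol{u}=(a,b,\dots,b)$. The analysis then reduces to a one-parameter recursion in the ratio $a/b$. Following \cite[Theorem~5]{BIS}:
\begin{itemize}
\item for $B>0$ there is a unique dominant phase, namely the one favouring colour $1$;
\item for $B=0$ the dominant phases are the disordered phase when $\beta<\beta_u$, and the $q$ ordered phases when $\beta>\beta_u$;
\item at $\beta=\beta_u$ they coexist.
\end{itemize}
Away from $\beta_u$, any dominant phase is a maximizer of $\varphi$. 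We do not need uniqueness: \Cref{theorem:BIS_agreement} only asks for one Hessian dominant maximizer.

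\emph{Step 3: Hessian nondegeneracy (the main obstacle).} We must show the Hessian of $\varphi$ on the tangent space of $\calP_q$ is negative definite at the chosen maximizer. Since $\varphi$ is a maximum, the Hessian is automatically negative semidefinite, so the task is to rule out a zero eigenvalue. I would compute the Hessian of $\Phi$ at $\boldsymbol{u}$ on the tangent space transverse to the scaling ray, and transfer it to $\varphi$ through the diffeomorphism $g$ via the envelope theorem of \Cref{remark:equal_Bethe}.

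By the symmetry of $(a,b,\dots,b)$, this Hessian splits into two blocks:
\begin{itemize}
\item a block on the $(q-2)$-dimensional space of vectors supported on the $b$-coordinates and summing to zero, which is a scalar multiple of the identity;
\item a one-dimensional radial block, in the direction changing $a$ against the $b$'s.
\end{itemize}
Each eigenvalue is an explicit expression of the form $(d-1)\,\cdot\,$(derivative of the BP map) $-1$, evaluated at the fixed point. Negativity of these expressions is the strict stability of the BP fixed point, and \cite{BIS} verify it for all parameters except where a maximizer becomes degenerate. From the phase diagram, that happens only at the coexistence point $\beta=\beta_u$ when $B=0$.

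If this check fails to be clean near the spinodal points of the non-dominant phases, note that those points are not maximizers, so their degeneracy is irrelevant. The remaining concern is that the dominant phase itself stays strictly stable there. I would confirm this by monotonicity of $\Phi$ along the one-parameter family.

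Combining Steps 1–3 with \Cref{theorem:BIS_agreement} gives $\varphi^{\mathrm{qu}}=\varphi^{\mathrm{an}}$ for all $(\beta,B)$ with $\beta\ne\beta_u$. For $B>0$ there is no exceptional value at all, since the dominant phase there is unique and nondegenerate.
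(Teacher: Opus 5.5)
Your overall route is the paper's. You absorb the field via $\mathbf{B}_B=\mathbf{D}^T\mathbf{B}\mathbf{D}$ (\Cref{example:concentration}), take a Hessian dominant maximizer from the Potts phase diagram \cite[Theorem~5]{BIS}, and invoke \Cref{theorem:BIS_agreement}. The trouble is the content you put into Steps~2--3.

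At $B=0$ the disordered phase does not give way to the ordered ones at $\beta_u$. It does so at the ordering threshold $\beta_o$, which for $q\ge3$ lies strictly between $\beta_u$ and the point $\beta_s$ where the disordered BP fixed point loses stability. On $(\beta_u,\beta_o)$ the disordered phase is still the unique dominant phase. At $\beta_u$ the ordered fixed points are born in a saddle--node bifurcation and are degenerate, but they are not maximizers of $\varphi$ there. At $\beta_o$ several maximizers coexist, but each is nondegenerate: coexistence is not degeneracy, and \Cref{theorem:BIS_agreement} needs only one Hessian dominant maximizer. So both links in your chain ``degenerate maximizer $\Leftrightarrow$ coexistence $\Leftrightarrow$ $\beta=\beta_u$'' are wrong. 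At zero field the degenerate fixed points occur only at $\beta_u$ and $\beta_s$, and neither is dominant when it degenerates. The exclusion of $\beta_u$ in the statement is harmless, not forced.

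The step that genuinely fails is $B>0$. The Potts analysis in \cite{BIS} is for zero external field, so it does not supply your claim of a unique dominant phase favouring colour~$1$, and that claim is false in general. On the ray $(a,b,\dots,b)$ the BP equation for $r=a/b$ reads $r=\e^{B}F(r)^{d-1}$ with $F(r)=(\e^\beta r+q-1)/(r+\e^\beta+q-2)$. For $q\ge3$, its region of two stable solutions contains $(\beta_u,\beta_s)\times\{0\}$ and extends into $B>0$. Hence the first-order transition persists in a small positive field. Along a curve $\beta_c(B)$, a weakly and a strongly biased phase, both favouring colour~$1$, are co-dominant. This curve ends at a cusp $(\beta^*,B^*)$ with $B^*>0$ (and $\beta^*<\beta_u$), where the two maxima merge. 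There the unique maximizer has slope exactly $1$ for the one-dimensional recursion, i.e.\ the Hessian of $\varphi$ has a zero eigenvalue in your radial block. So \Cref{theorem:BIS_agreement} is not applicable at that point, and no monotonicity of $\Phi$ along the family can restore negative definiteness. Your closing sentence that $B>0$ has no exceptional value is therefore incorrect. The Hessian-dominance route leaves $(\beta^*,B^*)$ uncovered, and the same is true of the one-line citation in the paper. To handle $B>0$ you need an argument that does not rely on nondegeneracy, e.g.\ \Cref{t:main1}, which gives quenched $=$ annealed for all $\beta,B\ge0$.
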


\paragraph{\bf Random cluster representation.} 
A third approach exploits the correspondence between the Potts and random-cluster partition functions; see \cite[Lemma~1.1]{Remco_rcm}. Bencs, Borb\'{e}nyi and Csikv\'{a}ri \cite{Hungarian} show that, on locally tree-like $d$-regular graph sequences, the random-cluster partition function at zero external field can be approximated up to a subexponential factor by the partition function of a suitable ferromagnetic two-spin model; see \cite[Theorem~2.5]{Hungarian}. Consequently, the two partition functions have the same limiting normalized logarithm. Their annealed and quenched results for the associated two-spin model \cite[Theorems~3.16 and~3.31]{Hungarian} then identify this limit with the annealed pressure on random $d$-regular graphs, thereby establishing quenched--annealed and, in fact, to noninteger $q>2$. Building on this rank-two approximation, Can and van der Hofstad~\cite[Corollary~1.15]{Remco_rcm} subsequently extended the result to all $\beta,B\geq0$

\subsubsection{{Second moment proof of the quenched-annealed agreement on the configuration model}} \label{subsubsection:second_moment}

We now present a direct second-moment proof of quenched--annealed agreement for ferromagnetic matrix models on the random $d$-regular configuration model, based on the matrix-norm representation of \cite{Galanis} and Frieze's trick \Cref{proposition:Frieze}. Let $(\mathrm{CM}_n(d))_{n\geq1}$ be a sequence of random $d$-regular configuration models defined on the same probability space. For an interaction matrix $\mathbf{B}$, we denote the corresponding random quenched, quenched, and annealed pressures by $\varphi_{\mathrm{CM}_n}^{\mathrm{r\text{-}qu}}(\mathbf{B})$, $\varphi_{\mathrm{CM}_n}^{\mathrm{qu}}(\mathbf{B})$, and $\varphi_{\mathrm{CM}_n}^{\mathrm{an}}(\mathbf{B})$, respectively, as in \Cref{d:ppp}. Their limiting quenched and annealed pressures are denoted by $\varphi_{\mathrm{CM}}^{\mathrm{qu}}(\mathbf{B})$ and $\varphi_{\mathrm{CM}}^{\mathrm{an}}(\mathbf{B})$, respectively. The following theorem gives the precise version of the first part of the informal statement in \Cref{thm_main_inf}. 
The corresponding result for the uniform simple $d$-regular graph is derived in the following subsection.

\begin{theorem}[Second moment and quenched--annealed agreement for the configuration model]\label{t:main1} 
Let $d\geq2$, and let $\mathbf{B}$ be a strictly positive ferromagnetic interaction matrix. For the random $d$-regular configuration model, the limiting annealed pressure and normalized logarithmic second moment exist and satisfy 
\begin{equation}\label{e:match} 
\Psi_{2,\mathrm{CM}} = 2\varphi_{\mathrm{CM}}^{\mathrm{an}}(\mathbf{B}). 
\end{equation} 
Moreover, the random quenched pressure converges almost surely to the annealed pressure: 
\begin{equation}\label{e:random_pressure_CM} \varphi_{\mathrm{CM}_n}^{\mathrm{r\text{-}qu}}(\mathbf{B}) \xrightarrow{\mathrm{a.s.}} \varphi_{\mathrm{CM}}^{\mathrm{an}}(\mathbf{B}). 
\end{equation}
Consequently, the limiting quenched pressure exists and satisfies 
\begin{equation}\label{QA} 
\varphi_{\mathrm{CM}}^{\mathrm{qu}}(\mathbf{B}) = \varphi_{\mathrm{CM}}^{\mathrm{an}}(\mathbf{B}). 
\end{equation}
In particular, these conclusions hold for the ferromagnetic Ising and Potts models. 
\end{theorem}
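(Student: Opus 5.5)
The proof combines \Cref{t:rep_3}, \Cref{p:secondmoment_BIS} and Frieze's trick (\Cref{proposition:Frieze}).

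\textbf{Step 1: the moment identity \eqref{e:match}.} Since $\mathbf{B}$ is strictly positive and positive semidefinite, \Cref{t:rep_3} shows that $\varphi^{\mathrm{an}}_{\mathrm{CM}}(\mathbf{B})$ exists and equals $d\log\|\hat{\mathbf{B}}\|_{p\to 2}$, with $p=d/(d-1)$. By \Cref{rem:ferro_product}, $\mathbf{B}\otimes\mathbf{B}$ is again a strictly positive ferromagnetic interaction matrix. By the pair-model identity, $Z_n^2$ is the partition function of $\mathbf{B}\otimes\mathbf{B}$ on the same graph, so \Cref{p:secondmoment_BIS} applies. It gives that $\Psi_{2,\mathrm{CM}}$ exists and equals $d\log\|\hat{\mathbf{B}}\otimes\hat{\mathbf{B}}\|_{p\to2}$.

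The only non-formal ingredient is the multiplicativity
\[
\|\hat{\mathbf{B}}\otimes\hat{\mathbf{B}}\|_{p\to 2}=\|\hat{\mathbf{B}}\|_{p\to2}^2 .
\]
The inequality ``$\ge$'' follows by testing on product vectors $\boldsymbol{u}\otimes\boldsymbol{u}$. The inequality ``$\le$'' holds because $p\le 2$, so Bennett's result \cite[Proposition~10.4]{Bennett} on $p\to q$ norms with $p\le q$ applies. If a self-contained argument is preferred, write $\hat{\mathbf{B}}\otimes\hat{\mathbf{B}}=(\hat{\mathbf{B}}\otimes\mathbf{I})(\mathbf{I}\otimes\hat{\mathbf{B}})$ and use Minkowski's integral inequality in the form $\ell^p(\ell^2)\to\ell^2(\ell^p)$, which is valid since $p\le2$.

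I expect this to be the main technical point, because multiplicativity of operator norms under tensor products fails for general $p\to q$ pairs. Combining the two representations gives $\Psi_{2,\mathrm{CM}}=2\varphi^{\mathrm{an}}_{\mathrm{CM}}(\mathbf{B})$.

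\textbf{Step 2: concentration \eqref{e:concentration}.} I will repeat the martingale argument of \Cref{ex:Potts_con} for a general matrix model. Under the coupling there, one edge switch changes at most two edges. Each such change rescales every Boltzmann weight by a factor in $[m/M,M/m]$, where $m=\min_{ij} B_{ij}>0$ and $M=\max_{ij}B_{ij}$. Hence the martingale differences are bounded by $2\log(M/m)$. The Doob martingale has $dn/2$ steps, so Azuma--Hoeffding gives the bound $2\exp(-c\delta^2 n)$. This is exponentially small, uniformly in $n$.

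\textbf{Step 3: conclusion.} With both hypotheses verified, \Cref{proposition:Frieze} yields $\varphi^{\mathrm{r\text{-}qu}}_{\mathrm{CM}_n}(\mathbf{B})\to\varphi^{\mathrm{an}}_{\mathrm{CM}}(\mathbf{B})$ almost surely, which is \eqref{e:random_pressure_CM}. Integrating, and noting that $|\tfrac1n\log Z_n|$ is uniformly bounded because $\log Z_n\in[n\log q+\tfrac{dn}{2}\log m,\;n\log q+\tfrac{dn}{2}\log M]$, dominated convergence gives \eqref{QA}.

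Internally, Frieze's trick works as follows. Paley--Zygmund gives $\bbP(Z_n\ge \tfrac12\bbE Z_n)\ge \e^{-o(n)}$. The concentration bound is stronger than this $\e^{-o(n)}$ probability, so it forces $\bbE[\tfrac1n\log Z_n]\ge\varphi^{\mathrm{an}}_n-o(1)$. Jensen's inequality gives the matching upper bound.

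\textbf{Potts specialisation.} The Ising and Potts cases follow from \Cref{example:concentration}. There $\mathbf{B}_B=\mathbf{D}^T\mathbf{B}\mathbf{D}$ is strictly positive and positive semidefinite, and on $d$-regular graphs it has the same partition function as the Potts model with field $B$.
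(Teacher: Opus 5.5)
Your proposal is correct and follows the paper's own route. It combines the matrix-norm representations of $\varphi^{\mathrm{an}}$ and $\Psi_2$ (\Cref{t:rep_3}, \Cref{p:secondmoment_BIS}) with Bennett's multiplicativity $\|\hat{\mathbf{B}}\otimes\hat{\mathbf{B}}\|_{p\to 2}=\|\hat{\mathbf{B}}\|_{p\to 2}^2$, the switching-coupling Azuma--Hoeffding bound with differences at most $2\log(M/m)$, and then Frieze's trick (\Cref{proposition:Frieze}); your Minkowski argument for the upper bound (valid since $p=d/(d-1)\le 2$) and the Paley--Zygmund and dominated-convergence remarks are correct elaborations rather than a different approach.
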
 

Before turning to the proof of \Cref{t:main1}, a few words on its relation to \cite{BIS} are in order. 
The scope of \Cref{t:main1} is complementary to that of \cite[Theorem~8]{BIS}, which allows ergodic interaction matrices with zero entries, but assumes $d\ge 3$ and the existence of a dominant phase with a negative-definite Hessian.
Our argument applies for $d\ge 2$ without this phase or Hessian assumption, but requires the matrix entries to be strictly positive.

In terms of proof approach, recall from the previous section that the partition function may be decomposed according to the empirical spin distribution as 
\[ Z_n=\sum_{\boldsymbol{\alpha}}Z_n(\boldsymbol{\alpha}), \] 
where $Z_n(\boldsymbol{\alpha})$ is the contribution from configurations with empirical spin distribution $\boldsymbol{\alpha}$. In~\cite[Theorem~7]{BIS}, Galanis et al.\ consider $Z_n(\boldsymbol{\alpha})$ for a dominant empirical spin distribution $\boldsymbol{\alpha}_{\mathrm{max}}$ and show that 
\[ \bbE[Z_n(\boldsymbol{\alpha}_{\mathrm{max}})^2] = \e^{o(n)}\bbE[Z_n(\boldsymbol{\alpha}_{\mathrm{max}})]^2 = \e^{o(n)}\bbE[Z_n]^2. \] 
This result does not directly give \eqref{e:match}, since it concerns the partition function restricted to a prescribed empirical spin distribution. Nevertheless, the proof of \cite[Theorem~7]{BIS} contains the matrix-norm calculation from which \eqref{e:match} follows; see \cite[proof of Theorem~7, equations~(22)--(25)]{BIS}.
We isolate this part and combine it with Frieze's trick as stated in \Cref{proposition:Frieze} to obtain the quenched--annealed agreement via a direct route from the second-moment criterion, without restricting the empirical spin distribution or applying small subgraph conditioning.

\begin{proof}[Proof of \Cref{t:main1}]
  The existence of the limiting annealed pressure and normalized logarithmic second moment was established in \Cref{subsection:regular_moment}. 
	From the matrix norm representations \eqref{e:annealed_norm} and \eqref{e:nice_sec_mo} of $\varphi^{\text{an}}$ and $\Psi_2$, we get 
	\[
	\Psi_2 =d\log || \hat{\mathbf{B}}_2||_{p\to 2} = d\log  (  ||\hat{\mathbf{B}}||_{p\to 2})^2 = 2 \varphi^{\text{an}}.
	\]
	This proves \eqref{e:match}.
It remains to verify the concentration hypothesis \eqref{e:concentration} of \Cref{proposition:Frieze}. We proceed as in \Cref{ex:Potts_con}. Let $m$ and $M$ denote the minimum and maximum entries of $\mathbf{B}$, respectively. If two pairings differ by a single switching, then the Boltzmann weight of every spin configuration changes by a factor in  \( \left[(m/M)^2,(M/m)^2\right]. \) Consequently, for the corresponding graphs $G$ and $G'$, 
\[ |\log Z_G-\log Z_{G'}| \leq 2\log(M/m). \] 
The switching coupling and Azuma--Hoeffding argument from \Cref{ex:Potts_con} therefore apply verbatim and yield \eqref{e:concentration}. Notice that this step uses the strict positivity of $\mathbf{B}$, but not its positive semidefiniteness. 
	Combining \eqref{e:match} and \eqref{e:concentration} with \Cref{proposition:Frieze} proves \eqref{e:random_pressure_CM} and \eqref{QA}.
	The Ising and Potts models are particular ferromagnetic models.
\end{proof}

We conclude with two observations concerning the second-moment identity \eqref{e:match}. The first relates it to the restricted second-moment calculation of \cite{BIS}, while the second describes a possible alternative derivation through the Bethe formulation.

\begin{rem} 
Let $\boldsymbol{\alpha}_{\mathrm{max}}$ be a dominant empirical spin distribution. Combining \eqref{e:match} with \cite[Theorem~7]{BIS} gives 
\[
\bbE[Z_n(\boldsymbol{\alpha}_{\mathrm{max}})^2] = \e^{o(n)}\bbE[Z_n]^2 = \e^{o(n)}\bbE[Z_n^2]. 
\] 
Thus, pairs of configurations for which both coordinate configurations have empirical spin distribution $\boldsymbol{\alpha}_{\mathrm{max}}$ contribute to the second moment at its full exponential rate. This clarifies how the restricted second-moment calculation in \cite{BIS} is related to the unrestricted identity \eqref{e:match}. 
\end{rem} 

\begin{rem}
The proof of \eqref{e:match} above uses the matrix-norm representations of the first and second moments in Frieze's trick. One could alternatively try to derive \eqref{e:match} through the Bethe formulation of the paired-spin model, whose partition function is $Z_n^2$. Indeed, \Cref{proposition:rep_1}, \Cref{proposition:second_moment_Bethe} and \Cref{theorem:bethe_variational} give 
\[
\Psi_2 = \sup_{\mathbf{x}\in\calP_{q^2\times q^2}} \Psi_{\mathrm{Bethe}}^{(2)}(\mathbf{x}) = \Phi_{\mathrm{Bethe}}^{*,(2)}, 
\] 
where $\Phi_{\mathrm{Bethe}}^{*,(2)}$ denotes the Bethe prediction for the paired-spin model. If this model satisfies the Bethe prediction, then 
\[ \Psi_2 = \varphi^{\mathrm{qu},(2)} = \lim_{n\to\infty}\frac{1}{n}\bbE[\log Z_n^2] = 2\varphi^{\mathrm{qu}}. \] 
Consequently, once quenched--annealed agreement for the original model is known, this gives 
\[ \Psi_2=2\varphi^{\mathrm{an}}. \] 
This route, however, requires verifying the Bethe prediction for the paired-spin model, which does not follow immediately from the corresponding statement for the original model. The matrix-norm argument avoids this additional step.
\end{rem}

\subsubsection{Transfer {of the quenched-annealed agreement to uniform regular simple graphs}}

We now transfer the configuration-model conclusions to uniform simple regular graphs.  For a sequence $(\mathrm{UG}_n(d))_{n \geq 1}$ of uniform simple $d$-regular graphs that are defined on the same probability space. For an interaction matrix $\mathbf{B}$, we denote the corresponding random quenched, quenched, and annealed pressures by $\varphi_{\mathrm{UG}_n}^{\mathrm{r\text{-}qu}}(\mathbf{B})$, $\varphi_{\mathrm{UG}_n}^{\mathrm{qu}}(\mathbf{B})$, and $\varphi_{\mathrm{UG}_n}^{\mathrm{an}}(\mathbf{B})$, respectively, as in \Cref{d:ppp}. Their limiting quenched and annealed pressures are denoted by $\varphi_{\mathrm{UG}}^{\mathrm{qu}}(\mathbf{B})$ and $\varphi_{\mathrm{UG}}^{\mathrm{an}}(\mathbf{B})$, respectively. The following theorem shows that these latter notions are well-defined: 

\begin{theorem}[{Quenched $=$ annealed for the uniform simple regular graph}]\label{t:main2}
	Let $d\ge 2$. Consider a spin model with strictly positive, ferromagnetic interaction matrix $\mathbf{B}$, and let $(\mathrm{UG}_n(d))_{n \geq 1}$ be a sequence of uniform simple $d$-regular graphs. Then $\Psi_{2,\mathrm{UG}} = 2\varphi_{\mathrm{UG}}^{\mathrm{an}}(\mathbf{B}),$ $ \varphi_{\mathrm{UG}}^{\mathrm{an}}(\mathbf{B})  = \varphi_{\mathrm{CM}}^{\mathrm{an}}(\mathbf{B})$ and
    \begin{equation}\label{QA2}
		\varphi_{\mathrm{UG}}^{\mathrm{an}}(\mathbf{B})
		=\varphi_{\mathrm{UG}}^{\mathrm{qu}}(\mathbf{B}) \qquad \text{and} \qquad \varphi_{\mathrm{UG}_n}^{\mathrm{r-qu}}(\mathbf{B}) \stackrel{a.s.}{\longrightarrow}
		\varphi_{\mathrm{UG}}^{\mathrm{an}}(\mathbf{B}).
	\end{equation}
	In particular, these conclusions hold for the ferromagnetic Ising and Potts models.
\end{theorem}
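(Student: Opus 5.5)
The plan is to transfer everything from the configuration model via the standard fact that $\mathrm{UG}_n(d)$ is $\mathrm{CM}_n(d)$ conditioned on the event $S_n:=\{\mathrm{CM}_n(d)\text{ is simple}\}$, with $\bbP(S_n)\to c_d\in(0,1)$. For any nonnegative functional $F$ of the graph, we have
\[
\bbE_{\mathrm{UG}}[F]=\frac{\bbE_{\mathrm{CM}}[F\mathds 1_{S_n}]}{\bbP(S_n)}\le \frac{\bbE_{\mathrm{CM}}[F]}{\bbP(S_n)}.
\]
Applied to $F=Z_n$ and $F=Z_n^2$, this gives the upper bounds $\varphi^{\mathrm{an}}_{\mathrm{UG}_n}\le \varphi^{\mathrm{an}}_{\mathrm{CM}_n}+o(1)$ and $\frac1n\log\bbE_{\mathrm{UG}}[Z_n^2]\le \Psi_{2,\mathrm{CM}}+o(1)=2\varphi^{\mathrm{an}}_{\mathrm{CM}}+o(1)$, using \Cref{t:main1}.

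For the matching lower bound on the first moment, I will not compare expectations directly, since the annealed CM average might a priori be carried by non-simple graphs. Instead I will use the a.s.\ convergence \eqref{e:random_pressure_CM}, which implies convergence in probability. Fix $\ve>0$ and let $A_n:=\{Z_n\ge \e^{n(\varphi^{\mathrm{an}}_{\mathrm{CM}}-\ve)}\}$. By \Cref{t:main1}, $\bbP_{\mathrm{CM}}(A_n)\to1$, so $\bbP_{\mathrm{CM}}(A_n\cap S_n)\ge c_d/2$ for large $n$. Hence
\[
\bbE_{\mathrm{UG}}[Z_n]\ge \frac{\bbE_{\mathrm{CM}}[Z_n\mathds 1_{A_n\cap S_n}]}{\bbP(S_n)}\ge \tfrac{c_d}{2}\,\e^{n(\varphi^{\mathrm{an}}_{\mathrm{CM}}-\ve)}.
\]
This yields $\lim\varphi^{\mathrm{an}}_{\mathrm{UG}_n}=\varphi^{\mathrm{an}}_{\mathrm{CM}}$. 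By Jensen, $\bbE_{\mathrm{UG}}[Z_n^2]\ge \bbE_{\mathrm{UG}}[Z_n]^2$, which gives the lower bound for the second moment. Together with the upper bound, $\Psi_{2,\mathrm{UG}}$ exists and equals $2\varphi^{\mathrm{an}}_{\mathrm{UG}}$.

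For the quenched statements, the concentration hypothesis \eqref{e:concentration} should be avoided on $\mathrm{UG}_n$, because the switching coupling of \Cref{ex:Potts_con} does not obviously preserve simplicity. The transfer of the CM estimates handles this instead. Let $D_n:=\{|\frac1n\log Z_n-\varphi^{\mathrm{an}}_{\mathrm{CM}}|>\delta\}$. For the CM, \eqref{e:concentration}, combined with $\varphi^{\mathrm{qu}}_{\mathrm{CM}_n}\to\varphi^{\mathrm{an}}_{\mathrm{CM}}$ from \Cref{t:main1}, gives $\bbP_{\mathrm{CM}}(D_n)\le \e^{-cn}$ for large $n$. Therefore
\[
\bbP_{\mathrm{UG}}(D_n)\le \e^{-cn}/\bbP(S_n)\le 2c_d^{-1}\e^{-cn},
\]
which is summable, and Borel--Cantelli yields $\varphi^{\mathrm{r\text{-}qu}}_{\mathrm{UG}_n}\to\varphi^{\mathrm{an}}_{\mathrm{UG}}$ almost surely. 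The averaged quenched limit then follows by bounded convergence. Here $|\frac1n\log Z_n|$ is deterministically bounded, since each Boltzmann weight lies in $[m^{dn/2},M^{dn/2}]$ and there are $q^n$ configurations. Alternatively, one can apply \Cref{proposition:Frieze} directly using this transferred concentration bound together with $\Psi_{2,\mathrm{UG}}=2\varphi^{\mathrm{an}}_{\mathrm{UG}}$.

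The main obstacle I expect is exactly this lower bound for the annealed UG moment. The crude comparison only gives one inequality, and the reverse inequality needs typical-graph information. That information comes from \eqref{e:random_pressure_CM}, which in turn rests on the second-moment identity of \Cref{t:main1}. A further subtlety is that \eqref{e:concentration} is stated around $\bbE[\frac1n\log Z_n]$ rather than around the limit, which is why $\varphi^{\mathrm{qu}}_{\mathrm{CM}_n}\to\varphi^{\mathrm{an}}_{\mathrm{CM}}$ is needed in the concentration step above. Finally, the Potts specialization follows from \Cref{example:concentration}, which gives a strictly positive positive-semidefinite $\mathbf{B}_B$.
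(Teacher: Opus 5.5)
Your proof is correct. The first-moment part coincides with the paper's Lemma~\ref{lemma:config2simple}: a crude comparison for the upper bound, and for the lower bound the almost sure convergence \eqref{e:random_pressure_CM} on the configuration model together with $\bbP(\mathrm{CM}_n(d)\text{ simple})\to c_d>0$.

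You depart from the paper in two places.

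\textbf{Second moment.} The paper obtains $\Psi_{2,\mathrm{UG}}=\Psi_{2,\mathrm{CM}}$ by applying Lemma~\ref{lemma:config2simple} a second time, to the paired-spin model with matrix $\mathbf{B}^{\otimes 2}$. This requires noting that this model is again strictly positive and ferromagnetic, and it uses the configuration-model convergence of its normalized log-partition function. You instead combine the crude upper bound with Jensen's inequality $\bbE[Z_n^2]\ge \bbE[Z_n]^2$ under the uniform law. This is lighter and needs only the first-moment transfer.

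\textbf{Quenched statements.} The paper transfers the configuration-model concentration around $\varphi^{\mathrm{qu}}_{\mathrm{CM}_n}$ to the simple graph. It then uses boundedness of the random pressures to recentre this bound around $\varphi^{\mathrm{qu}}_{\mathrm{UG}_n}$, and invokes Frieze's trick (Proposition~\ref{proposition:Frieze}) together with $\Psi_{2,\mathrm{UG}}=2\varphi^{\mathrm{an}}_{\mathrm{UG}}$. You instead centre the configuration-model bound at the limit $\varphi^{\mathrm{an}}_{\mathrm{CM}}$ using \eqref{QA}, transfer it by dividing by the probability of simplicity, and conclude directly by Borel--Cantelli and bounded convergence.

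\textbf{What each route buys.} Your route shows that, once Theorem~\ref{t:main1} is available, every conclusion for $\mathrm{UG}_n(d)$ follows by conditioning on an event of probability bounded away from zero. The identity $\Psi_{2,\mathrm{UG}}=2\varphi^{\mathrm{an}}_{\mathrm{UG}}$ then becomes a by-product rather than an ingredient. The paper's route keeps the second-moment criterion as the explicit mechanism on the simple graph too, which fits its stated aim of a second-moment proof.
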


The main step in establishing \Cref{t:main2} is the following lemma, which we prove first:

\begin{lemma}\label{lemma:config2simple}
	Let $d \ge 2.$  
    Consider a spin model with strictly positive, ferromagnetic interaction matrix $\mathbf{B}$, and let $(\mathrm{UG}_n(d))_{n \geq 1}$ be a sequence of uniform simple $d$-regular graphs. Then the limiting annealed pressure exists and satisfies
    \begin{align}
       \varphi_{\mathrm{UG}}^{\mathrm{an}}(\mathbf{B})  = \varphi_{\mathrm{CM}}^{\mathrm{an}}(\mathbf{B}). 
    \end{align}
\end{lemma}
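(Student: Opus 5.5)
Since $\mathrm{UG}_n(d)$ is $\mathrm{CM}_n(d)$ conditioned on the event $\mathcal S_n:=\{\mathrm{CM}_n(d)\text{ is simple}\}$, we write
\[
\bbE[Z_{\mathrm{UG}_n}] = \frac{\bbE[Z_{\mathrm{CM}_n}\mathds 1_{\mathcal S_n}]}{\bbP(\mathcal S_n)},
\]
and use $\bbP(\mathcal S_n)\to c_d\in(0,1)$ from \Cref{s:configuration}. The denominator then contributes $O(1)$, and the task reduces to showing $\frac1n\log\bbE[Z_{\mathrm{CM}_n}\mathds 1_{\mathcal S_n}]\to\varphi^{\mathrm{an}}_{\mathrm{CM}}$. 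The upper bound is immediate from $\bbE[Z_{\mathrm{CM}_n}\mathds 1_{\mathcal S_n}]\le\bbE[Z_{\mathrm{CM}_n}]$ and \Cref{t:rep_3}. The limits here are taken along $n$ with $dn$ even.

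For the lower bound we use the second-moment information from \Cref{t:main1} instead of any small-subgraph analysis. By Cauchy--Schwarz,
\[
\bbE[Z_{\mathrm{CM}_n}\mathds 1_{\mathcal S_n^c}] \le \bbE[Z_{\mathrm{CM}_n}^2]^{1/2}\,\bbP(\mathcal S_n^c)^{1/2} = \e^{o(n)}\,\bbE[Z_{\mathrm{CM}_n}]\,(1-c_d+o(1))^{1/2},
\]
where the last step uses $\Psi_{2,\mathrm{CM}}=2\varphi^{\mathrm{an}}_{\mathrm{CM}}$. The subexponential factor $\e^{o(n)}$ is too weak for this bound to beat $\bbE[Z_{\mathrm{CM}_n}]$ directly, so we pass to the quenched statement. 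Concentration \eqref{e:concentration} together with \eqref{e:random_pressure_CM} gives, for every $\delta>0$,
\[
\bbP\bigl(Z_{\mathrm{CM}_n}\ge \e^{n(\varphi^{\mathrm{an}}_{\mathrm{CM}}-\delta)}\bigr)\to1.
\]
Since $\bbP(\mathcal S_n)\to c_d>0$, the event $\{Z_{\mathrm{CM}_n}\ge \e^{n(\varphi^{\mathrm{an}}_{\mathrm{CM}}-\delta)}\}\cap\mathcal S_n$ has probability at least $c_d/2$ for large $n$. Hence
\[
\bbE[Z_{\mathrm{CM}_n}\mathds 1_{\mathcal S_n}]\ge \tfrac{c_d}{2}\,\e^{n(\varphi^{\mathrm{an}}_{\mathrm{CM}}-\delta)},
\]
which gives $\liminf\frac1n\log\bbE[Z_{\mathrm{UG}_n}]\ge\varphi^{\mathrm{an}}_{\mathrm{CM}}-\delta$. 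Letting $\delta\downarrow0$ and combining with the upper bound, the limit exists and equals $\varphi^{\mathrm{an}}_{\mathrm{CM}}$.

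The main point to check is the step converting the a.s.\ convergence \eqref{e:random_pressure_CM} into a probability bound that holds uniformly on $\mathcal S_n$. This is fine: convergence in probability of $\frac1n\log Z_{\mathrm{CM}_n}$ suffices, and \Cref{t:main1} already provides it via \Cref{proposition:Frieze}. Strict positivity of $\mathbf B$ is used only through \Cref{t:main1}, so no further input is needed. For the later parts of \Cref{t:main2}, the analogous sandwich applied to $\bbE[Z^2_{\mathrm{UG}_n}]\le \bbE[Z^2_{\mathrm{CM}_n}]/\bbP(\mathcal S_n)$ gives the second-moment identity for $\mathrm{UG}_n(d)$.
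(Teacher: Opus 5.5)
Your proof is correct and follows essentially the same route as the paper. The upper bound uses $\bbE[Z_{\mathrm{CM}_n}\mathds 1_{\mathcal S_n}]\le \bbE[Z_{\mathrm{CM}_n}]$ together with $\bbP(\mathcal S_n)\to c_d$, and the lower bound intersects $\mathcal S_n$ with the high-probability event $\{\frac1n\log Z_{\mathrm{CM}_n}\ge \varphi^{\mathrm{an}}_{\mathrm{CM}}-\delta\}$ from \eqref{e:random_pressure_CM}. The Cauchy--Schwarz estimate on $\mathcal S_n^c$ is a detour you rightly abandon, so you can delete it.
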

\begin{proof}
   Fix a spin model with strictly positive ferromagnetic interaction matrix $\mathbf{B}$. For notational simplicity, we omit the $\mathbf{B}$-dependence in the notation (e.g. $\varphi_{\mathrm{UG}}^{\mathrm{an}}(\mathbf{B})$ becomes $\varphi_{\mathrm{UG}}^{\mathrm{an}}$) and write $\bbE[X; A]\equiv \bbE[X\mathds{1}_A]$. All expectations and probabilities are taken with respect to the configuration models $(\mathrm{CM}_n(d))_{n \geq 1}$.  Let
    $$A_n = \{ \mathrm{CM}_n \text{ is simple}\}, \qquad \text{such that} \qquad \bbP(A_n)\longrightarrow c_d \in (0,1)$$
    (see \cite[Proposition 7.12]{vdH_2016}). Conditionally on $A_n$, the configuration model has the law of $\mathrm{UG}_n(d)$, and 
 \[ \varphi_{\mathrm{UG}_n}^{\mathrm{an}} = \frac{1}{n} \log\frac{\bbE[Z_n;A_n]}{\bbP(A_n)}. \] 
  We first show the upper bound 
    $\limsup_{n\to\infty} \varphi_{\mathrm{UG}_n}^{\mathrm{an}}  \le \varphi_{\mathrm{CM}}^{\mathrm{an}}$. By definition of the law of $(\mathrm{UG}_n(d))_{n \geq 1}$,
	\[
	\varphi_{\mathrm{UG}_n}^{\mathrm{an}} = \frac{1}{n} \log \left(\frac{\bbE[Z_n; A_n]}{\bbP(A_n)}\right) = \frac{1}{n}\log \bbE[Z_n ;A_n] + o(1) \le \varphi_{\mathrm{CM}_n}^{\mathrm{an}} +o(1).
	\]
	Taking $n\to\infty$ on both sides shows that $\limsup_{n\to\infty} \varphi_{\mathrm{UG}_n}^{\mathrm{an}}  \le \varphi_{\mathrm{CM}}^{\mathrm{an}}$. 
    
   For the matching lower bound, fix $\varepsilon>0$ and let
\( E_n := \left\{ \log Z_n/n \geq \varphi_{\mathrm{CM}}^{\mathrm{an}}-\varepsilon \right\}. \)
 By \eqref{e:random_pressure_CM}, $\bbP(E_n^c)\to0$. Since $\bbP(A_n)\to c_d\in(0,1)$, 
 \[ \bbP(A_n\cap E_n) \geq \bbP(A_n)-\bbP(E_n^c) = c_d+o(1). \]
 Consequently, 
\begin{align*}
\bbE[Z_n;A_n] \geq \bbE[Z_n;A_n\cap E_n] \geq \e^{n(\varphi_{\mathrm{CM}}^{\mathrm{an}}-\varepsilon)} \bbP(A_n\cap E_n) = \e^{n(\varphi_{\mathrm{CM}}^{\mathrm{an}}-\varepsilon)+o(n)}.
\end{align*}
It follows that 
\( \liminf_{n\to\infty} \varphi_{\mathrm{UG}_n}^{\mathrm{an}} \geq \varphi_{\mathrm{CM}}^{\mathrm{an}}-\varepsilon. \)
Letting $\ve \downarrow0$ proves the lower bound.
\end{proof}

\begin{proof}[Proof of \Cref{t:main2}]
Fix a strictly positive ferromagnetic interaction matrix $\mathbf{B}$ and suppress it from the notation. By \Cref{lemma:config2simple}, \(\varphi_{\mathrm{UG}}^{\mathrm{an}} = \varphi_{\mathrm{CM}}^{\mathrm{an}}. \) 

Consider next the paired-spin model with interaction matrix $\mathbf{B}^{\otimes 2}$. This model is again strictly positive and ferromagnetic, and its partition function is $Z_n^2$. Applying \Cref{lemma:config2simple} to the paired-spin model and using \Cref{t:main1}, we obtain 
\[ \Psi_{2,\mathrm{UG}} = \Psi_{2,\mathrm{CM}} = 2\varphi_{\mathrm{CM}}^{\mathrm{an}} = 2\varphi_{\mathrm{UG}}^{\mathrm{an}}. \]

To apply \Cref{proposition:Frieze} once more, it remains to verify concentration. Let \( A_n:=\{\mathrm{CM}_n(d)\text{ is simple}\}\) as in the previous proof, where $\bbP(A_n)\to c_d\in(0,1)$. Hence, for every $\varepsilon>0$, the configuration-model concentration estimate \eqref{e:concentration} gives 
\begin{align*} 
\bbP\left( \left| \varphi_{\mathrm{UG}_n}^{\mathrm{r\text{-}qu}} - \varphi_{\mathrm{CM}_n}^{\mathrm{qu}} \right|>\varepsilon \right) = \bbP\left( \left| \varphi_{\mathrm{CM}_n}^{\mathrm{r\text{-}qu}} - \varphi_{\mathrm{CM}_n}^{\mathrm{qu}} \right|>\varepsilon \,\middle|\,A_n \right) \leq \frac{ \bbP\left( \left| \varphi_{\mathrm{CM}_n}^{\mathrm{r\text{-}qu}} - \varphi_{\mathrm{CM}_n}^{\mathrm{qu}} \right|>\varepsilon \right) }{ \bbP(A_n) } \leq \e^{-c_\varepsilon n},
\end{align*} 
after adjusting $c_\varepsilon>0$. Since the random quenched pressures are uniformly bounded, taking expectations in the preceding concentration estimate also gives
\[ \left| \varphi_{\mathrm{UG}_n}^{\mathrm{qu}} - \varphi_{\mathrm{CM}_n}^{\mathrm{qu}} \right| \longrightarrow0. \] 
Consequently, for every $\varepsilon>0$ and all sufficiently large $n$, 
\begin{align*} 
\bbP\left( \left| \varphi_{\mathrm{UG}_n}^{\mathrm{r\text{-}qu}} - \varphi_{\mathrm{UG}_n}^{\mathrm{qu}} \right|>2\varepsilon \right) \leq \bbP\left( \left| \varphi_{\mathrm{UG}_n}^{\mathrm{r\text{-}qu}} - \varphi_{\mathrm{CM}_n}^{\mathrm{qu}} \right|>\varepsilon \right) \leq \e^{-c_\varepsilon n}. 
\end{align*}
Thus, the required exponential concentration estimate holds for the uniform simple model. 
The second-moment identity above and \Cref{proposition:Frieze} now give 
\[ \varphi_{\mathrm{UG}_n}^{\mathrm{r\text{-}qu}} \xrightarrow{\mathrm{a.s.}} \varphi_{\mathrm{UG}}^{\mathrm{an}} \qquad\text{and}\qquad \varphi_{\mathrm{UG}}^{\mathrm{qu}} = \varphi_{\mathrm{UG}}^{\mathrm{an}}.
\] 
\end{proof}


\noindent
{\bf Acknowledgments.}
The work of RvdH and NM was supported in part by the Netherlands Organisation for Scientific Research (NWO) through the Gravitation {\small{\sf NETWORKS}} grant no.\ 024.002.003. The work of WMR is supported by the NWO VIDI grant VI.Vidi.213.112.

{\small
\bibliographystyle{alpha-sortname}  
\bibliography{bibliography.bib}}

\appendix
\section{The annealed Ising pressure on random regular graphs}
\label{s:ising models}
This appendix complements the general representations in
\Cref{subsection:regular_moment} by presenting two further representations specific
to the Ising model ($q=2$). {Although the Ising model can be viewed as the two-state Potts model, it is traditionally formulated with spin values in $\{-1,1\}$ instead of $\{1,2\}$, and we adopt this convention throughout the appendix.} The first representation, due to Can~\cite{can}, is based on the moment
generating function of the number of disagreeing edges and parametrizes the
pressure by the empirical proportion of positive spins. The second, due to
Borinsky, Ren and Wiesmann~\cite{BorinskyRenWiesmann}, uses the
high-temperature expansion of van der Waerden \cite{vanDerWaerden1941} and a weighted enumeration of edge-bicolored
regular graphs. 

\paragraph{\bf Ising variational formula via moment generating functions.} We first present Can's variational formula \cite{can} for the annealed pressure of the Ising model on random $d$-regular graphs for all $d\geq2$. 
Here, the optimization parameter $t$ represents 
the empirical proportion of positive spins.

\begin{theorem}[Theorem 1.1 of \cite{can}]\label{t:can_rep}
 On a random $d$-regular graph with $d \geq 2$, for all $\beta \geq 0$ and $B \in \mathbb{R}$, the annealed Ising pressure $\varphi_n^{\mathrm{an}-\mathrm{Ising}}$ converges, and its limit $\varphi^{\mathrm{an}-\mathrm{Ising}}(\beta, B)$ is given by
\begin{equation}\label{eq:annealed_press}
	\varphi^{\mathrm{an}-\mathrm{Ising}}(\beta, B) =\frac{\beta d}{2}-B+\max _{0 \leq t \leq 1}[H(t)+2 B t+d F_\beta(t)],
\end{equation}
where
\begin{equation} \label{eq:F}
	F_\beta(t)=\int_0^{u(t)} \log f_\beta(s) d s,
\end{equation}
with $u(t)=\min \{t, 1-t\}$ and
\begin{equation}\label{e:f}
	f_\beta(s)= \frac{\e^{-2 \beta}(1-2 s)+\sqrt{1+\left(\e^{-4 \beta}-1\right)(1-2 s)^2}}{2(1-s)}.
\end{equation}
\end{theorem}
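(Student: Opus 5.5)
The plan is to compute $\bbE[Z_n]$ exactly as a finite sum over the number of positive spins and the number of disagreeing edges, and then apply the Laplace principle.

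\textbf{Step 1: rewrite the Boltzmann weight.} Write the Ising weight with spins in $\{-1,1\}$ as $\exp\{\beta\sum_{\{i,j\}\in E}\sigma_i\sigma_j + B\sum_i\sigma_i\}$. Let $D$ be the number of edges joining a $+$ vertex to a $-$ vertex. Since $|E|=dn/2$ and $\sigma_i\sigma_j=1-2\mathds{1}_{\{\sigma_i\neq\sigma_j\}}$, the interaction term equals $\beta dn/2-2\beta D$. If $k=tn$ vertices carry spin $+1$, the field term is $B(2k-n)=n(2Bt-B)$. By exchangeability of the configuration model,
\[
\bbE[Z_n]=\e^{n(\beta d/2-B)}\sum_{k=0}^n\binom{n}{k}\e^{2Bk}\,\bbE\bigl[\e^{-2\beta D_k}\bigr],
\]
where $D_k$ is the number of cross pairings in a uniform perfect matching of $dk$ ``plus'' and $d(n-k)$ ``minus'' half-edges.

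\textbf{Step 2: exact law of $D_k$.} For a cross-edge count $m$ with the right parity,
\[
\bbP(D_k=m)=\frac{\binom{dk}{m}\binom{d(n-k)}{m}\,m!\,(dk-m-1)!!\,(d(n-k)-m-1)!!}{(dn-1)!!}.
\]
Stirling's formula then gives a rate function $I_t(y)$ with $\bbP(D_k=ydn)=\e^{-dn I_t(y)+o(n)}$, uniformly over the polynomially many pairs $(k,m)$. Since $\binom{n}{k}=\e^{nH(t)+o(n)}$, the Laplace principle yields
\[
\varphi^{\mathrm{an}}=\frac{\beta d}{2}-B+\max_{t\in[0,1]}\bigl[H(t)+2Bt+d\,G_\beta(t)\bigr],
\qquad
G_\beta(t):=\sup_{0\le y\le u(t)}\{-2\beta y-I_t(y)\}.
\]
Taking the supremum over $y$ exactly is justified because $y\mapsto -2\beta y - I_t(y)$ is continuous, and the boundary and parity constraints only cost $o(n)$.

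\textbf{Step 3: identify $G_\beta$ with $F_\beta$.} It remains to show $G_\beta=F_\beta$. This is the main obstacle: the formula for $F_\beta$ is an integral, whereas $G_\beta$ is a variational expression. I would proceed as follows.
\begin{enumerate}
\item Show $G_\beta(t)=G_\beta(1-t)$, which is clear from the symmetry $+\leftrightarrow-$. This reduces to $t\le 1/2$, where $u(t)=t$.
\item Show $G_\beta(0)=0$, since then $D=0$ and $I_0(0)=0$.
\item Show $I_t$ is strictly convex in $y$, so the supremum is attained at a unique interior point $y^*(t)$. The stationarity equation $\partial_y I_t(y)=-2\beta$ reads, after Stirling,
\[
\frac{y^2}{(t-y)(1-t-y)}=\e^{-4\beta}.
\]
This is a quadratic in $y$, and I take the root lying in $[0,t]$.
\item Apply the envelope theorem: $G_\beta'(t)=-\partial_t I_t(y)\big|_{y=y^*(t)}$. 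By the Stirling expansion this is $\tfrac12\log\frac{(1-t)(t-y^*)}{t(1-t-y^*)}$ or a similar explicit logarithm.
\item Substitute the explicit root $y^*(t)$ and simplify. The discriminant $1+(\e^{-4\beta}-1)(1-2t)^2$ should appear, so that $G_\beta'(t)=\log f_\beta(t)$.
\end{enumerate}
Integrating from $0$ then gives $G_\beta=F_\beta$. The algebraic simplification in the last step is where I expect most of the work. The most likely pitfall is the normalisation by $d$: the exponent is measured per half-edge, $dn$, rather than per edge, and this must match the factor $d$ in front of $F_\beta$. If the direct simplification is messy, I would instead verify that both sides vanish at $t=0$ and have equal derivatives, checking the identity $\log f_\beta(t)=-\partial_t I_t(y^*(t))$ by squaring and clearing the square root.
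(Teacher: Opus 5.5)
Your proposal is correct, but it reaches $F_\beta$ by a genuinely different route from the paper.

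\textbf{Comparison with the paper.} The paper, following Can, never writes down the law of the cross-edge count. It derives the one-step recursions \eqref{eq:rec_1_g}--\eqref{eq:rec_2} for the moment generating function $g_\beta(k,m)$ and passes to the ratios $h_\beta(k,m)=g_\beta(k,m)/g_\beta(k-1,m)$. Using the uniform estimate \cite[Lemma~2.1]{can}, it shows these ratios approach the positive root $f_\beta$ of \eqref{eq:h_quadratic}. It then obtains $F_\beta$ as the Riemann-sum limit of the telescoping sum of $\log h_\beta$.

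You instead use the exact matching count, Stirling, and a two-parameter Laplace principle, and then identify the inner supremum with the envelope theorem. Your $G_\beta'(t)$ is the continuum counterpart of Can's $\log h_\beta(k,m)$, and the same quadratic reappears at the end (see below).

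Your route avoids the uniform ratio estimates, which are the technical core of Can's argument. It also shows that Can's formula is the $q=2$ case of \Cref{proposition:rep_1}, with the inner optimization in \eqref{e:objective} solved in closed form. Indeed, $H(t)-dI_t(y)=\frac d2H(\mathbf x)-(d-1)H(t)$ for the edge distribution $x_{++}=t-y$, $x_{+-}=x_{-+}=y$, $x_{--}=1-t-y$. The paper's route, in turn, produces the integral form of $F_\beta$ structurally rather than through an a posteriori derivative check. It also gives uniform control of the moment generating function itself, not only of its exponential rate.

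\textbf{A correction in Step~3(4).} With your Stirling rate
\[
-I_t(y)=t\log t+(1-t)\log(1-t)-y\log y-\tfrac{t-y}{2}\log(t-y)-\tfrac{1-t-y}{2}\log(1-t-y),
\]
the envelope derivative is
\[
G_\beta'(t)=\log\frac{t}{1-t}+\frac12\log\frac{1-t-y^*}{t-y^*}.
\]
This is not the expression you wrote. Yours equals $\frac12\log\frac{t}{1-t}\neq0$ at $\beta=0$, whereas $f_0\equiv1$.

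With the correct expression, the final step needs neither the explicit root nor squaring:
\begin{itemize}
\item Put $r:=\sqrt{(1-t-y^*)/(t-y^*)}$ and $h:=\frac{t}{1-t}\,r=\e^{G_\beta'(t)}$.
\item Stationarity gives $y^*=\e^{-2\beta}\sqrt{(t-y^*)(1-t-y^*)}$, hence $\e^{-2\beta}r=y^*/(t-y^*)$.
\item Multiply the quadratic $(1-t)h^2+\e^{-2\beta}(2t-1)h-t=0$ by $(1-t)(t-y^*)/t$. It becomes $t(1-t-y^*)+(2t-1)y^*-(1-t)(t-y^*)=0$, which holds identically.
\item The product of the roots is $-t/(1-t)<0$, so $h$ is the unique positive root, that is, $h=f_\beta(t)$.
\end{itemize}

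Your symmetry reduction is consistent with integrating over all of $[0,1]$, because $f_\beta(s)f_\beta(1-s)=1$. The remaining regularity points are routine:
\begin{itemize}
\item the maximizer is interior, since $\partial_y(-I_t)\to\pm\infty$ at the endpoints of $[0,u(t)]$;
\item $y^*(t)$ is smooth by the implicit function theorem;
\item $\log f_\beta(0)=-2\beta$ is finite, so integrating from $0$ is harmless.
\end{itemize}
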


Recall the representation \eqref{eq:Bolt_weight} of the contribution of a single configuration $\sigma$ to the partition function. Below in \eqref{eq_Z_posmono}, we will see that for the ferromagnetic Ising model with spins in $\{-1,1\}$, this simplifies to an expression in terms of the total number of positive spins and edges between disagreeing spins. As the following proof sketch shows, Can's formula is based on the observation that on random $d$-regular graphs, the \emph{law} of the number of edges between disagreeing spins itself only depends on the total number of positive spins. This enables the expression \eqref{eq:annealed_press} of the annealed pressure to be written solely in terms of an optimization problem over the proportion of positive spins. The main contribution of Can is to obtain an explicit asymptotic formula for the moment generating function of the number of edges between disagreeing spins in terms of the proportion of positive spins.

\begin{proof}[Sketch of the proof of Theorem \ref{t:can_rep}]  For a spin configuration $\bsigma\in \{\pm 1\}^V$, let $\sigma_+$ and $\sigma_-$ denote
the sets of vertices carrying spins $+1$ and $-1$, respectively, and let
$e(\sigma_+,\sigma_-)$ be the number of edges between these two sets, which we call \emph{disagreeing edges}. 
Starting from the partition function, we rewrite 
	\begin{align}\label{eq_Z_posmono}
	Z_n = \sum_{\bsigma\in \Omega} \e^{ \beta \sum_{\{ i, j \} \in E_n} \sigma_i \sigma_j + B\sum_{i \in V } \sigma_i  }=\sum_{\bsigma\in \Omega} \e^{\beta |E_n| -B |V_n| } \e^{-2\beta e(\sigma_+, \sigma_- ) + 2|\sigma_+| B} .
	\end{align}	
We next take a closer look at the distribution of $e(\sigma_+, \sigma_-)$. For this, let $X(k,m)$ denote the number of edges between vertices in $[k]$ and $[m]\setminus [k]$ in the $1$-regular configuration model $\mathrm{CM}_m(1)$. 
By the definition of the configuration model as a successive uniform pairing of half-edges, $e(\sigma_+,\sigma_-)$ is distributed as  $X(d|\sigma_+|, dn)$ (see \cite[(2.2)]{can}). In particular, its distribution
depends on $\sigma$ only through
$j=|\sigma_+|$. 
	Taking expectations and grouping configurations by their number of positive spins gives
	\begin{equation}\label{eq:e_to_X}
		\bbE[Z_n] = \e^{\beta nd/2 - Bn} \sum_{j=0}^{n} { n\choose j}\e^{2jB} \bbE[\e^{-2\beta X(dj,dn)}].  
	\end{equation}
For convenience, write $g_\beta(k,m):=\bbE[\e^{-2\beta X(k,m)}]$, so that \eqref{eq:e_to_X} becomes
\begin{equation}\label{e:Z}
	\bbE[Z_n]
    =
    \e^{\beta dn/2-Bn}
    \sum_{j=0}^n
    \binom{n}{j}\e^{2Bj}
    g_\beta(dj,dn).
\end{equation}
It remains to estimate the asymptotics of the moment-generating function $g_\beta(dj,dn)$ as $n\to\infty$. To determine the exponential growth rate of
$g_\beta(k,m)$, Can's idea is to obtain recursions for $X(k,m)$ by exposing the partner of one vertex in $[k]$ in $\mathrm{CM}_m(1)$ at a time. 
We start with an arbitrary vertex $v \in [k]$ and pair it uniformly with another vertex. With probability $(k-1)/(m-1)$, it is paired with a vertex in $[k]$, which contributes no crossing edge between $[k]$ and $[m]\setminus [k]$, while the remaining pairings contribute $X(k-2,m-2)$ edges in distribution. With probability $(m-k)/(m-1)$, a vertex in $[m]\setminus [k]$ is chosen, contributing one crossing edge, while the remaining pairings contribute $X(k-1,m-2)$ edges in distribution. We thus obtain
\begin{equation}\label{eq:rec_1_g}
g_\beta(k,m)
=\frac{k-1}{m-1}g_\beta(k-2,m-2)
+\frac{\e^{-2\beta}(m-k)}{m-1}g_\beta(k-1,m-2).
\end{equation}
Alternatively, we may start with a vertex in $[m]\setminus[k]$.
It is paired with another vertex in $[m]\setminus[k]$ with probability $(m-k-1)/(m-1)$ and with a vertex in $[k]$ with probability $k/(m-1)$.
In the second case, the exposed pair contributes one crossing edge.
Consequently,
\begin{equation}\label{eq:rec_2}
g_\beta(k,m)
=\frac{m-k-1}{m-1}g_\beta(k,m-2)
+\frac{\e^{-2\beta}k}{m-1}g_\beta(k-1,m-2).
\end{equation}
Can next introduces the ratios
\begin{equation}\label{e:ratio}
h_\beta(k,m)
    :=
    \frac{g_\beta(k,m)}{g_\beta(k-1,m)}.
\end{equation}
The ratio $h_\beta(k,m)$ measures the multiplicative change in $g_\beta(k,m)$ when $k$ is increased by one. 
To derive a recursion for $h_\beta$, in \eqref{e:ratio}, we use \eqref{eq:rec_1_g} to replace $g_\beta(k,m)$ and \eqref{eq:rec_2} to replace $g_\beta(k-1,m)$ by expressions only involving $g_\beta(k-2,m-2)$ and $g_\beta(k-1,m-2)$.
Dividing through by $g_\beta(k-2,m-2)$ gives, for $2\leq k\leq m-1$,
\begin{equation}\label{eq:h_recursion}
h_\beta(k,m)
=\frac{(k-1)+\e^{-2\beta}(m-k)h_\beta(k-1,m-2)}
{\e^{-2\beta}(k-1)+(m-k)h_\beta(k-1,m-2)}.
\end{equation}
If $k/m\to t$, the ratios on both sides of \eqref{eq:h_recursion} converge to the same positive limit, which we denote by $h_\beta(t)$.
Formally passing to the limit gives the fixed-point equation
\begin{equation}\label{eq:h_fixed_point}
h_\beta(t)=\frac{t+\e^{-2\beta}(1-t)h_\beta(t)}{\e^{-2\beta}t+(1-t)h_\beta(t)}.
\end{equation}
Equivalently,
\begin{equation}\label{eq:h_quadratic}
(1-t)h_\beta(t)^2+\e^{-2\beta}(2t-1)h_\beta(t)-t=0.
\end{equation}
The unique positive solution of \eqref{eq:h_quadratic} is precisely $h_\beta(t)=f_\beta(t)$ from \eqref{e:f}.
The technical estimates in \cite[Lemma 2.1]{can} make this approximation rigorous and show, uniformly for $k\leq m/2$, that
\[
    h_\beta(k,m)
    =
    f_\beta\!\left(\frac{k-1}{m}\right)+O(m^{-1}).
\]
To go back from $h_\beta$ to $g_\beta$,
we can write $\log g_\beta$ as the telescoping sum 
\[
    \frac{1}{m}\log g_\beta(k,m)
    =
 \frac{1}{m}    \sum_{i=1}^k \log h_\beta(i,m),
\]
which is asymptotically a Riemann integral. Together with the symmetry
$g_\beta(k,m)=g_\beta(m-k,m)$, this yields, uniformly in $0\leq k\leq m$,
\[
    \frac{1}{m}\log g_\beta(k,m)
    =
    F_\beta\!\left(\frac{k}{m}\right)+o(1),
\]
with $F_{\beta}$ as in \eqref{eq:F}. 
In particular, uniformly in $j$,
\[
    \frac{1}{n}\log g_\beta(dj,dn)
    =
    dF_\beta(j/n)+o(1).
\]
Finally, passing to the limit $n \to \infty$, an application of the Laplace principle to the sum in \eqref{e:Z} that makes use of the previous estimate together with Stirling's formula for the binomial coefficient gives
\[
    \varphi^{\mathrm{an}}(\beta,B)
    =
    \frac{\beta d}{2}-B
    +
    \max_{0\leq t\leq1}
    \bigl\{
        H(t)+2Bt+dF_\beta(t)
    \bigr\}.
\]
\end{proof}
Can also uses this representation to prove
 that the annealed and quenched Ising pressures
agree \cite[Proposition~3.2]{can}. This gives an alternative analytic proof of the agreement that also applies to all
$d \ge 2$.

\paragraph{\bf Ising variational formula via analytic combinatorics.}
We next present the combinatorial representation of the annealed pressure for ferromagnetic Ising models on random $d$-regular graphs obtained by Borinsky, Ren and Wiesmann~\cite{BorinskyRenWiesmann} for $d \geq 3$. 
To formulate the result, we set 
\begin{equation}\label{e:kappa_tau_phi}
    \kappa:=\tanh(\beta),
    \qquad
    \tau:=\tanh(B),
    \qquad 
    \phi:=\arctan\sqrt{\tanh(\beta) }.
\end{equation}
For $\theta \in \mathbb{R}$, let 
\begin{equation}
    \mathcal{V}_d(\theta;\beta,B)
    :=
    \frac{(1+\kappa)^{d/2}}{2d!}
    \Bigl[
        (1+\tau)\cos^d(\theta-\phi)
        +
        (1-\tau)\cos^d(\theta+\phi)
    \Bigr].
\end{equation}
The main result of Borinsky, Ren and Wiesmann~\cite{BorinskyRenWiesmann} is then the following: 

\begin{theorem}
\label{t:combinatorial_Ising_rep}
Let $d\geq3$.
For every $\beta>0$ and $B\in\bbR$, the annealed Ising pressure on the random $d$-regular configuration model is
\begin{equation}\label{eq:combinatorial_Ising}
    \varphi^{\mathrm{an}-\mathrm{Ising}}(\beta,B)
    =
    \log 2+ f(\beta,B),
\end{equation}
where
\begin{equation}
\label{eq:combinatorial_Ising_pressure}
    f(\beta,B)
    =
    \log\bigl(\cosh(B)\bigr)
    +
    \frac{d}{2}\log\bigl(\cosh(\beta)\bigr)
    +
    \log\left(
        d!\max_{\theta\in\bbR}|\mathcal{V}_d(\theta;\beta,B)|
    \right).
\end{equation}
\end{theorem}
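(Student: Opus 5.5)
The plan is a high-temperature (van der Waerden) expansion of the Ising partition function, averaged over the configuration model, followed by a saddle-point analysis of the resulting generating function. Throughout we use spins in $\{-1,1\}$.

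\emph{Step 1: expansion.} I write
\[
\e^{\beta\sigma_i\sigma_j}=\cosh(\beta)\bigl(1+\kappa\sigma_i\sigma_j\bigr),\qquad \e^{B\sigma_i}=\cosh(B)\bigl(1+\tau\sigma_i\bigr).
\]
This gives
\[
Z_n=\cosh(\beta)^{dn/2}\cosh(B)^n\sum_{\bsigma}\prod_{e}(1+\kappa\sigma_i\sigma_j)\prod_i(1+\tau\sigma_i).
\]
Expanding both products and summing over $\bsigma$ leaves $2^n$ times a weighted count of edge subsets $F\subseteq E_n$. Vertices of even $F$-degree receive weight $1$, and vertices of odd $F$-degree receive weight $\tau$, the factor $\tau$ coming from choosing the field term there. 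The factors $\log 2$, $\log\cosh(B)$ and $\tfrac d2\log\cosh(\beta)$ in \eqref{eq:combinatorial_Ising_pressure} come out of this step directly.

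\emph{Step 2: averaging over the matching.} Colour the edges of $F$ by one colour and the rest by the other. Taking $\bbE$ over the configuration model then becomes a count of pairings of $dn$ half-edges into two colour classes, with weight $\kappa^{|F|}$ and a vertex weight depending on the parity of each vertex's coloured degree. Equivalently, each vertex chooses which of its $d$ half-edges are coloured, and the coloured and uncoloured half-edges are then perfectly matched separately. The pair count is $(2a-1)!!\,(2b-1)!!$, where $2a$ and $2b$ are the numbers of coloured and uncoloured half-edges.

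I encode this with Gaussian integrals, writing $(2a-1)!!=\bbE[X^{2a}]$ for a standard normal $X$. This yields a two-dimensional integral
\[
\iint \bigl[v(x,y)\bigr]^n\,\e^{-(x^2+y^2)/2}\,\mathrm dx\,\mathrm dy,
\]
where $v(x,y)$ is the one-vertex generating polynomial:
\[
v(x,y)=\sum_k\binom dk(\sqrt\kappa\,x)^k y^{d-k}\,w(k),\qquad w(k)=\begin{cases}1,&k\text{ even},\\ \tau,&k\text{ odd}.\end{cases}
\]
The parity weighting is the average of the two sign choices, so
\[
v(x,y)=\tfrac12\Bigl[(1+\tau)(y+\sqrt\kappa\,x)^d+(1-\tau)(y-\sqrt\kappa\,x)^d\Bigr].
\]

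\emph{Step 3: polar coordinates and Laplace's method.} After rescaling, I write $y+\sqrt\kappa\,x\propto\cos(\theta-\phi)$ and $y-\sqrt\kappa\,x\propto\cos(\theta+\phi)$ with $\tan\phi=\sqrt\kappa$, which matches \eqref{e:kappa_tau_phi}. The radial part then factors off. The radial integral is of the form $\int r^{dn}\e^{-r^2/2}\,\mathrm dr$, and after Stirling it cancels the normalization $(dn-1)!!$ of $\bbE$, contributing exactly the prefactor $(1+\kappa)^{d/2}$ appearing in $\mathcal V_d$.

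What remains is $\int\bigl|d!\,\mathcal V_d(\theta)\bigr|^n\,\mathrm d\theta$, up to subexponential factors. Since $n\to\infty$ with $\theta$ ranging over a compact circle, Laplace's method yields $n\log\max_\theta|d!\,\mathcal V_d|$. This gives \eqref{eq:combinatorial_Ising}.

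\emph{Main obstacle.} The hardest part is making Step 3 rigorous. The integrand $v^n$ can change sign (for instance when $d$ is odd, or at odd-parity terms), so cancellations could in principle lower the growth rate below $\max_\theta|d!\,\mathcal V_d|$.

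I would first handle the upper bound by the triangle inequality. For the lower bound I would show that the maximum of $|\mathcal V_d|$ is attained at finitely many nondegenerate points. Near these points $\mathcal V_d$ has a fixed sign, and the contributions of symmetric maximizers ($\theta\mapsto\theta+\pi$) add rather than cancel because the total number of half-edges $dn$ is even. The requirement that these maximizers be nondegenerate is where I expect $d\geq3$ and $\beta>0$ to be needed.
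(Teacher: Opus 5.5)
Your route is correct, and it differs genuinely from the paper's.

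\textbf{How the paper proceeds.} The paper averages over the configuration model by passing to unlabelled isomorphism classes weighted by $1/|\aut(g)|$. It then uses orbit--stabilizer to rewrite the high-temperature expansion as a weighted count of edge-bicoloured $d$-regular multigraphs. Finally it imports the Borinsky--Meroni--Wiesmann asymptotic enumeration. That last step also needs nondegeneracy of the maximizers from Borinsky--Ren--Wiesmann, continuity in $(\beta,B)$ at the critical point, and division by the case $\beta=B=0$ to cancel the combinatorial constants.

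\textbf{How your route differs.} You stay with labelled half-edges. Pairs (matching, edge subset) correspond bijectively to triples (coloured half-edge set, matching of the coloured half-edges, matching of the rest). Together with $(2a-1)!!=\bbE[X^{2a}]$, this gives the exact identity
\[
\bbE[Z_n]=\frac{2^n\cosh(\beta)^{dn/2}\cosh(B)^n}{(dn-1)!!}\,\bbE\bigl[v(X,Y)^n\bigr].
\]
With $(x,y)=(r\sin\theta,r\cos\theta)$ one has $y\pm\sqrt\kappa\,x=\sqrt{1+\kappa}\,r\cos(\theta\mp\phi)$, so $v=r^d\,d!\,\mathcal V_d(\theta)$. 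The radial integral equals $2^{dn/2}(dn/2)!$, which is $(dn-1)!!$ times a factor of order $\sqrt n$. So the prefactor $(1+\kappa)^{d/2}$ comes from this change of variables, not from the radial integral as you wrote.

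\textbf{What each route buys.} Yours is the Gaussian-integral mechanism behind the enumeration result, carried out directly. It is self-contained and avoids all automorphism bookkeeping. It also works verbatim for $d=2$ and $\beta=0$; the paper needs $d\geq3$ because the enumeration is indexed by $r=(d-2)n/2$. What the paper's route gives in exchange is the sharp prefactor, which is irrelevant for the pressure.

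\textbf{Your ``main obstacle''.} This obstacle does not exist, and your proposed way around it fails at one parameter point. The integrand $(d!\,\mathcal V_d(\theta))^n$ is pointwise nonnegative:
\begin{itemize}
\item for $d$ even, $d!\,\mathcal V_d\geq 0$ because $|\tau|<1$;
\item for $d$ odd, $dn$ even forces $n$ even.
\end{itemize}
So your displayed $\int|d!\,\mathcal V_d|^n\,\mathrm d\theta$ is literally correct. The upper bound $2\pi\max_\theta|d!\,\mathcal V_d|^n$ is immediate. The lower bound comes from an interval of positive length on which $|d!\,\mathcal V_d|\geq\max_\theta|d!\,\mathcal V_d|-\varepsilon$. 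Together these give the exponential rate by continuity and compactness alone.

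Nondegeneracy is therefore unnecessary, and your plan to prove it would fail. At $B=0$, $\tanh\beta=1/(d-1)$, the second derivative of $\mathcal V_d$ vanishes at the maximizer $\theta=0$. Indeed, the second derivative of $\cos^d(\theta-\phi)$ at $\theta=0$ is $d\cos^{d-2}\phi\,[(d-1)\sin^2\phi-\cos^2\phi]$, which is zero exactly when $(d-1)\kappa=1$. The paper handles this point by continuity in $(\beta,B)$. In your argument, $d\geq3$ and $\beta>0$ play no role at all.
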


Whereas Can's formula is parametrized by the proportion of positive spins and is derived from a recursion for the moment generating function of the number of disagreeing edges, the approach of Borinsky, Ren and Wiesmann~\cite{BorinskyRenWiesmann} begins with the high-temperature expansion of the Ising partition function, originally due to van der Waerden~\cite{vanDerWaerden1941}. This expansion rewrites the partition function on a fixed graph as a weighted sum over edge subsets. After averaging over random regular graphs, these terms can be interpreted as weighted edge-bicolored regular graphs. The asymptotic enumeration of these graphs, obtained in \cite{BorinskyMeroniWiesmann}, reduces the thermodynamic limit to the one-dimensional optimization problem in \eqref{eq:combinatorial_Ising_pressure}.

\begin{proof}[Sketch of the proof of \Cref{t:combinatorial_Ising_rep}]
\textbf{Step 1: Reduction to a weighted combinatorial sum.}		We first express the annealed partition function as a weighted sum over isomorphism classes of $d$-regular multi-graphs. 

For a finite graph $G=(V,E)$ and a spin configuration $\bsigma\in\{-1,1\}^{V}$, recall that the Boltzmann weight is given by
\begin{equation}\label{e:combinatorial_Ising_weight}
    W_G({\bsigma})
    =
    \exp\left(
        \beta\sum_{\{i,j\}\in E}\sigma_i\sigma_j
        +
        B\sum_{i\in V}\sigma_i
    \right),
\end{equation}
so that $Z_G(\beta,B) =	\sum_{\bsigma\in\{-1,1\}^{V}}W_G({\bsigma}).$  The partition function is invariant under graph isomorphisms: If $G\simeq G'$, then the isomorphism induces a bijection between their spin configurations that preserves the corresponding Boltzmann weights.

We distinguish three types of objects. The configuration model is defined as a uniform pairing of labeled half-edges attached to the labeled vertex set $[n]$. Forgetting the half-edge labels produces a vertex-labeled $d$-regular multi-graph, while additionally forgetting the vertex labels produces an isomorphism class of unlabeled $d$-regular multi-graphs. 

Let $\calG_{n,d}$ denote the collection of isomorphism classes of unlabeled $d$-regular multi-graphs on $n$ vertices. For each $g\in\calG_{n,d}$, choose a representative $G\in g$. Since the partition function is invariant under graph isomorphisms, we may unambiguously write \( Z_g(\beta,B):=Z_G(\beta,B). \) We denote by $\aut(g)$ the full automorphism group of the corresponding half-edge configuration. Besides automorphisms induced by permutations of the vertices, this group includes permutations of parallel edges and exchanges of the two half-edges forming a loop. 

Define 
\begin{equation}\label{e:weighted_partition_sum}
Z_{n,d}(\beta,B) := \sum_{g\in\calG_{n,d}} \frac{Z_g(\beta,B)}{|\aut(g)|}.
\end{equation}
The configuration-model expectation of the partition function can be expressed as the following normalized weighted sum.
    
    \begin{lemma}\label{lem:ann_combi}
 For the Ising model on the random $d$-regular configuration model,
      \begin{equation}\label{e:annealed_weighted_sum}
		\bbE[Z_G(\beta,B)]
		=
		\frac{Z_{n,d}(\beta,B) }{\displaystyle\sum_{g\in\calG_{n,d}}|\aut(g)|^{-1}}.
	\end{equation}  
    \end{lemma}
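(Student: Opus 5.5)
The plan is to write $\bbE[Z_G(\beta,B)]$ as a sum over pairings, regroup the pairings by the isomorphism class of the resulting multigraph, and count the pairings in each class with the orbit--stabilizer theorem.

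The configuration model on $[n]$ with $d$ labeled half-edges per vertex is a uniform perfect matching of the $dn$ labeled half-edges. There are $(dn-1)!!$ such matchings. Write $G(\omega)$ for the multigraph produced by a matching $\omega$. Then
\[
\bbE[Z_G(\beta,B)] = \frac{1}{(dn-1)!!}\sum_{\omega} Z_{G(\omega)}(\beta,B).
\]
The partition function is isomorphism-invariant, so $Z_{G(\omega)}$ depends only on the unlabeled class $g\in\calG_{n,d}$ of $G(\omega)$. Hence the sum equals $\sum_{g}N(g)\,Z_g(\beta,B)$, where $N(g)$ is the number of matchings $\omega$ whose multigraph lies in class $g$.

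To compute $N(g)$, let the group $\Gamma:=S_n\ltimes (S_d)^n$ act on matchings. Here $S_n$ relabels the vertices and $(S_d)^n$ permutes the $d$ half-edge labels at each vertex; the action is defined so that a vertex permutation carries the half-edges of each vertex along with it. This group has order $|\Gamma|=n!\,(d!)^n$. Two matchings lie in the same $\Gamma$-orbit exactly when they produce isomorphic unlabeled multigraphs, so the matchings of class $g$ form a single orbit. The stabilizer of a matching $\omega$ in class $g$ is the automorphism group of its half-edge configuration, and this is precisely $\aut(g)$ as defined above. That group includes vertex automorphisms, permutations of parallel edges and swaps of loop half-edges, which is why the convention was chosen.

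Orbit--stabilizer then gives
\[
N(g)=\frac{n!\,(d!)^n}{|\aut(g)|}.
\]
Substituting,
\[
\bbE[Z_G(\beta,B)]=\frac{n!(d!)^n}{(dn-1)!!}\sum_g \frac{Z_g(\beta,B)}{|\aut(g)|}.
\]
Applying the same identity to the constant function $1$, since $\sum_g N(g)=(dn-1)!!$, yields
\[
\frac{(dn-1)!!}{n!(d!)^n}=\sum_g |\aut(g)|^{-1}.
\]
Dividing the two displays gives \eqref{e:annealed_weighted_sum}.

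The main obstacle is checking that the stabilizer really coincides with $\aut(g)$ as a set of half-edge relabelings. This requires that $\Gamma$ acts on matchings and not on abstract multigraphs. Concretely, one must verify two things:
\begin{itemize}
\item a pair of a vertex permutation and local half-edge permutations fixes $\omega$ if and only if it is an automorphism of the half-edge configuration;
\item for parallel edges and loops, the local swaps that exchange parallel half-edges, or the two ends of a loop, are counted exactly once.
\end{itemize}
I would verify both by identifying the half-edge configuration with $\omega$ itself. With that identification, both statements follow directly from the definitions.
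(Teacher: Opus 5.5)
Your proof is correct and is essentially the paper's argument. Both proofs rest on the count that exactly $n!\,(d!)^n/|\aut(g)|$ of the $(dn-1)!!$ pairings produce a multigraph in the class $g$, and both then normalize in the same way.

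The only difference is bookkeeping. The paper gets this count in two steps: the probability of a fixed vertex-labeled multigraph (from \cite{vdH_2016}), times the $n!/|\aut_v(G)|$ vertex labelings. It then asserts $|\aut(g)|=|\aut_v(G)|\prod_i 2^{m_{ii}}\prod_{i\le j}m_{ij}!$. Your single orbit--stabilizer step, for the group of vertex relabelings together with local half-edge permutations, identifies the stabilizer with $\aut(g)$ directly, so you never need that product formula.
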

	\begin{proof}[Proof of Lemma~\ref{lem:ann_combi}]
	 In \eqref{e:annealed_weighted_sum}, the expectation is taken over vertex-labeled graphs $G$ chosen from the configuration model after forgetting the labels of the half-edges. For a fixed vertex-labeled $d$-regular multi-graph $G$ on vertex set $[n]$, its probability under the configuration model is given by 
	\begin{equation}\label{CM_formula_with_label}
		\bbP(\mathrm{CM}_n=G)
		=
		\frac{1}{(dn-1)!!}
		\frac{(d!)^n}{\prod_{i\in[n]}2^{m_{ii}}\prod_{1\leq i\leq j\leq n}m_{ij}!}  ,
	\end{equation}
	where $m_{ij}$ is the number of edges between vertices $i$ and $j$, and $m_{ii}$ is the number of loops at vertex $i$; see \cite[Proposition 7.7]{vdH_2016}. The denominator \(\prod_{i\in[n]}2^{m_{ii}} \prod_{1\leq i\leq j\leq n}m_{ij}! \) accounts for the different half-edge pairings that give rise to the same vertex-labeled multi-graph: the factors $m_{ij}!$ account for permutations of parallel edges, while each loop contributes a factor $2$ from exchanging its two half-edges.

    Now let $g$ be the unlabeled isomorphism class of $G$. The number of distinct vertex labelings of a representative of $g$ is \( \frac{n!}{|\aut_v(G)|}, \) where $\aut_v(G)$ denotes the automorphism group induced on the vertex set. Since all these vertex labelings have the same probability under the configuration model, 
    \begin{equation}\label{CM_formula_without_label} 
    \bbP(\mathrm{CM}_n(d)\in g) = \frac{n!}{|\aut_v(G)|} \bbP(\mathrm{CM}_n(d)=G). 
    \end{equation} 
    Combining \eqref{CM_formula_with_label} and \eqref{CM_formula_without_label} gives 
    \begin{equation}\label{e:probability} 
    \bbP(\mathrm{CM}_n(d)\in g) = \frac{n!(d!)^n}{(dn-1)!!} \frac{1}{|\aut(g)|},
    \end{equation}
    where \( |\aut(g)| = |\aut_v(G)| \prod_{i\in[n]}2^{m_{ii}} \prod_{1\leq i\leq j\leq n}m_{ij}! \) is the order of the full automorphism group associated with the multi-graph, including vertex automorphisms, permutations of parallel edges, and exchanges of the two half-edges of loops. Thus, the probability of an isomorphism class $g$ is proportional to $|\aut(g)|^{-1}$. Since the partition function depends only on the isomorphism class, it follows that 
    \[ 
    \bbE[Z_G(\beta,B)] = \frac{ \displaystyle \sum_{g\in\calG_{n,d}} |\aut(g)|^{-1}Z_g(\beta,B) }{ \displaystyle \sum_{g\in\calG_{n,d}} |\aut(g)|^{-1} } = \frac{Z_{n,d}(\beta,B)} {\displaystyle\sum_{g\in\calG_{n,d}}|\aut(g)|^{-1}}. 
    \] 
	\end{proof} 
Since $Z_g(0,0)=2^n$ for every $g\in\calG_{n,d}$, we have \( Z_{n,d}(0,0) = 2^n\sum_{g\in\calG_{n,d}}|\aut(g)|^{-1}. \) Consequently, \Cref{lem:ann_combi} gives the exact identity 
\begin{equation}\label{eq:weighted_annealed_relation}
\bbE[Z_G(\beta,B)] = 2^n\frac{Z_{n,d}(\beta,B)}{Z_{n,d}(0,0)}.
\end{equation}
It follows that \( \varphi^{\mathrm{an}-\mathrm{Ising}}(\beta,B) = \log 2+f(\beta,B), \) where 
\begin{equation}\label{eq:def_f_combinatorial} 
f(\beta,B) := \lim_{n\to\infty} \frac{1}{n} \log\frac{Z_{n,d}(\beta,B)}{Z_{n,d}(0,0)}. 
\end{equation} 
It therefore remains to determine the exponential growth rate of the weighted sum $Z_{n,d}(\beta,B)$.

\textbf{Step 2: High-temperature expansion.} We first rewrite the Ising partition function using the high-temperature expansion. For this, recall $\kappa$ and $\tau$ from \eqref{e:kappa_tau_phi}, and moreover set
\begin{equation}
    \rho:=\cosh(\beta),
    \qquad \text{and} \qquad
    \eta:=\cosh(B).
\end{equation}
The identity
\begin{equation}
    \e^{t\sigma}
    =
    \cosh(t)\bigl(1+\sigma\tanh(t)\bigr),
    \qquad \sigma\in\{-1,1\},
\end{equation}
gives 
\begin{equation}
\begin{split}
    W_G({\bsigma})
    ={}&
    [\cosh(\beta)]^{|E|}
    [\cosh(B)]^{|V|}
    \prod_{\{i,j\}\in E}
    \bigl(1+\sigma_i\sigma_j\tanh(\beta)\bigr)\prod_{i\in V}
    \bigl(1+\sigma_i\tanh(B)\bigr).   
\end{split}
\end{equation}
Expanding the product over the edges amounts to choosing a subset $\gamma\subseteq E$. For a fixed $\gamma$, the spin $\sigma_v$ appears with power $\deg_\gamma(v)$. Here, $\deg_\gamma(v)$ denotes the degree of $v$ in the subgraph induced by $\gamma$, with each loop counted twice. Summing independently over the spins at the vertices gives 
\[ \sum_{\sigma_v\in\{-1,1\}} \sigma_v^{\deg_\gamma(v)} \bigl(1+\tau\sigma_v\bigr) = 2\tau^{\deg_\gamma(v)\bmod 2}. \]
Moreover, \( \kappa^{|\gamma|} = \prod_{v\in V} \kappa^{\deg_\gamma(v)/2}. \)
Since $G$ is $d$-regular, $|E|=dn/2$, and we obtain 
\begin{equation}\label{eq:cancellation_free_Ising} 
Z_g(\beta,B) = \sum_{\gamma\subseteq E_g} \prod_{v\in V_g}\lambda_v^{(g,\gamma)}, 
\end{equation}
where 
\begin{equation}\label{eq:effective_vertex_weight} 
\lambda_v^{(g,\gamma)} := 2\eta\rho^{d/2} \kappa^{\deg_\gamma(v)/2} \tau^{\deg_\gamma(v)\bmod 2}. 
\end{equation} 
This is the high-temperature expansion in the form needed below; see \cite[Proposition~2.3]{BorinskyRenWiesmann}.

Substituting \eqref{eq:cancellation_free_Ising} into the definition of $Z_{n,d}$ gives 
\begin{equation}\label{eq:double_sum_bicoloring} 
Z_{n,d}(\beta,B) = \sum_{g\in\calG_{n,d}} \frac{1}{|\aut(g)|} \sum_{\gamma\subseteq E_g} \prod_{v\in V_g}\lambda_v^{(g,\gamma)}. 
\end{equation}

\textbf{Step 3: Edge-bicolored graph interpretation.} We interpret a subset $\gamma\subseteq E_g$ as an edge bicoloring by assigning color $1$ to the edges in $\gamma$ and color $0$ to those in $E_g\setminus\gamma$. If \(\deg_1(v):=\deg_\gamma(v), \deg_0(v):=d-\deg_\gamma(v), \) then the weight of $v$ depends only on the pair $(\deg_0(v),\deg_1(v))$. More precisely, setting 
\begin{equation}\label{eq:Lambda_Ising}
\Lambda_{a,b} := 2\eta\rho^{d/2}\kappa^{b/2}\tau^{b\bmod 2}, \qquad a+b=d, 
\end{equation} 
we have \(\lambda_v^{(g,\gamma)} = \Lambda_{\deg_0(v),\deg_1(v)}. \) Thus, \eqref{eq:double_sum_bicoloring} is a weighted enumeration of $d$-regular edge-bicolored multi-graphs.

The expression in \eqref{eq:double_sum_bicoloring} still sums separately over an uncolored isomorphism class $g$ and over all subsets of the edge set of a chosen representative. We next reorganize it as a single sum over isomorphism classes of edge-bicolored multi-graphs. 

For a fixed $g\in\calG_{n,d}$, the group $\aut(g)$ acts on $2^{E_g}$ by sending an edge subset $\gamma$ to its image under an automorphism. Two subsets lie in the same orbit precisely when they define isomorphic edge-bicolored multi-graphs. Moreover, the weight \[ \prod_{v\in V_g}\lambda_v^{(g,\gamma)} \] is constant on every orbit. 

Let $\calO(g)$ denote the collection of orbits of this action. For an orbit $\calO_\gamma\in\calO(g)$, let $\aut(g,\gamma)$ denote the subgroup of $\aut(g)$ that preserves $\gamma$, or equivalently, preserves the two edge colors. By the orbit--stabilizer theorem, \[ |\calO_\gamma| = \frac{|\aut(g)|}{|\aut(g,\gamma)|}. \] Therefore, \begin{align} Z_{n,d}(\beta,B) &= \sum_{g\in\calG_{n,d}} \sum_{\calO_\gamma\in\calO(g)} \frac{|\calO_\gamma|}{|\aut(g)|} \prod_{v\in V_g}\lambda_v^{(g,\gamma)} \nonumber\\ &= \sum_{g\in\calG_{n,d}} \sum_{\calO_\gamma\in\calO(g)} \frac{1}{|\aut(g,\gamma)|} \prod_{v\in V_g} \Lambda_{\deg_0(v),\deg_1(v)}. \label{eq:bicolored_isomorphism_sum} \end{align} Hence, $Z_{n,d}(\beta,B)$ is a weighted sum over isomorphism classes of $d$-regular edge-bicolored multi-graphs, with each class weighted by the reciprocal of the size of its color-preserving automorphism group.

\textbf{Step 4: Asymptotic enumeration.} We can now apply the asymptotic enumeration results of Borinsky, Meroni and Wiesmann~\cite{BorinskyMeroniWiesmann}. Their result applies to weighted sums of precisely the form \eqref{eq:bicolored_isomorphism_sum}, where the weight of a vertex depends only on the numbers of incident edges of the two colors. 

To state it, let \( r:=|E|-|V|. \) For a $d$-regular graph on $n$ vertices, 
\begin{equation}\label{eq:r_n_relation} 
r = \frac{dn}{2}-n = \frac{d-2}{2}n. 
\end{equation}
Equivalently, the usual Euler characteristic $|V|-|E|$ is $-r$. Thus, taking $n\to\infty$ with fixed $d\geq3$ is equivalent to taking $r\to\infty$.

	\begin{proposition}[{\cite[Proposition 5.1]{BorinskyMeroniWiesmann}}]
	\label{p:BMW_asymptotic_enumeration}
	Let $\calG^d_{-r}$ be the set of isomorphism classes of $d$-regular edge-bicolored graphs of Euler characteristic $-r$, and let
	\begin{equation}
		A_r
		:=
		\sum_{G\in\calG^d_{-r}}
		\frac{1}{|\aut(G)|}
		\prod_{v\in V}\Lambda_{\deg_0(v),\deg_1(v)},
	\end{equation}
	where $\deg_0(v)$ and $\deg_1(v)$ are the numbers of edges of the two colors incident to $v$, and $\Lambda_{a,b}\in\bbR$ is fixed for every $a,b\geq0$ with $a+b=d$.
	Define
	\begin{equation}\label{eq:function_V}
		\mathcal{V}(x,y)
		:=
		\sum_{a+b=d}\Lambda_{a,b}\frac{x^ay^b}{a!b!},
	\end{equation}
	let $\Phi$ be the set of global maximizers of $|\mathcal{V}|$ on the unit circle $S^1$, and set
	\begin{equation}\label{e:function_B}
		\mathcal B(x,y)
		:=
		d^2-
		\frac{\partial_x^2\mathcal{V}(x,y)+\partial_y^2\mathcal{V}(x,y)}{\mathcal{V}(x,y)}.
	\end{equation}
	Let $M=d/(d-2)$ and $K=2/(d-2)$.
	If $\mathcal B(x,y)\neq0$ for every $(x,y)\in\Phi$, then
	\begin{equation}\label{eq:BMW_proposition_5_1}
		A_r
		\sim
		\begin{cases}
		\displaystyle
		\frac{1}{2\sqrt{2}\pi}
		d^{rM+1/2}K^{r-1/2}\Gamma(r)
		\sum_{(x,y)\in\Phi}
		\frac{\mathcal{V}(x,y)^{rK}}{\sqrt{\mathcal B(x,y)}},
		& rK,rM\in\bbZ,\\[2ex]
		0,
		& \text{otherwise}.
		\end{cases}
	\end{equation}
	\end{proposition}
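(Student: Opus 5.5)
The plan is to turn $A_r$ into a Gaussian moment, evaluate that moment exactly in polar coordinates, and finish with a one-dimensional Laplace method on the circle.

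\textbf{Step 1: exact Gaussian-moment identity.} Euler characteristic $-r$ means $|E|-|V|=r$. Together with $2|E|=d|V|$ this fixes $n:=|V|=2r/(d-2)=rK$ and $|E|=dn/2=rM$. If $rK\notin\bbZ$ or $rM\notin\bbZ$, the set $\calG^d_{-r}$ is empty and $A_r=0$, which is the second case of \eqref{eq:BMW_proposition_5_1}. Otherwise I would label everything and use an orbit--stabilizer argument, as in the proof of \Cref{lem:ann_combi} and in \eqref{eq:bicolored_isomorphism_sum}. The weighted sum $\sum_G|\aut(G)|^{-1}\prod_v\Lambda_{\deg_0(v),\deg_1(v)}$ should equal the following count, divided by $n!$:
\begin{itemize}
\item assign to each of the $n$ labeled vertices a pair $(a_v,b_v)$ with $a_v+b_v=d$ and weight $\Lambda_{a_v,b_v}/(a_v!\,b_v!)$;
\item attach $a_v$ labeled color-$0$ and $b_v$ labeled color-$1$ half-edges to $v$;
\item pair all color-$0$ half-edges among themselves and all color-$1$ half-edges among themselves.
\end{itemize}
Here the factor $n!\prod_v a_v!b_v!$ is exactly the group whose orbits are the isomorphism classes and whose stabilizers are the $\aut(G)$, including loop flips and parallel-edge permutations. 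The number of perfect matchings of $2m$ points is $(2m-1)!!=\bbE[X^{2m}]$ for $X\sim N(0,1)$, and it vanishes for odd $2m$ just as the Gaussian odd moments do. Hence, with $X,Y$ independent standard Gaussians,
\[
A_r=\frac{1}{n!}\,\bbE\bigl[\mathcal V(X,Y)^n\bigr],\qquad n=rK .
\]
Checking this identity carefully, in particular that $|\aut(G)|$ is exactly the stabilizer in the presence of loops and multi-edges, is the main combinatorial obstacle.

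\textbf{Step 2: polar coordinates.} Since $\mathcal V$ is homogeneous of degree $d$, write $\mathcal V(\rho\cos\theta,\rho\sin\theta)=\rho^d v(\theta)$. Then
\[
\bbE[\mathcal V^n]=\frac{1}{2\pi}\int_0^\infty \rho^{dn+1}\e^{-\rho^2/2}\,d\rho\int_0^{2\pi}v(\theta)^n\,d\theta
=\frac{2^{dn/2}\Gamma(dn/2+1)}{2\pi}\int_0^{2\pi}v(\theta)^n\,d\theta .
\]

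\textbf{Step 3: Laplace method on the circle.} The angular integral is dominated by neighbourhoods of the maximizers $\Phi$ of $|v|$. The key computation is that for a degree-$d$ homogeneous $f$, the polar Laplacian gives $\rho^2\Delta f=d^2f+\partial_\theta^2 f$. Therefore at a point of $S^1$,
\[
\frac{v''(\theta)}{v(\theta)}=\frac{\Delta\mathcal V}{\mathcal V}-d^2=-\mathcal B .
\]
At a maximizer of $|v|$ we have $\mathcal B\ge0$, and the hypothesis $\mathcal B\neq0$ makes each maximum nondegenerate. Each point of $\Phi$ then contributes
\[
v(\theta_0)^n\sqrt{2\pi/(n\mathcal B)}\,\bigl(1+o(1)\bigr),
\]
with its sign retained when $\Lambda$ has negative entries. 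Antipodal maximizers satisfy $v(\theta+\pi)=(-1)^dv(\theta)$. They appear as separate terms of the sum over $\Phi$ and are not merged, so any cancellation is encoded in the formula itself; this is consistent with the parity requirement that $dn$ be even.

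\textbf{Step 4: constants.} Combine $1/n!$, $2^{dn/2}\Gamma(dn/2+1)$ and the Laplace factor, and apply Stirling's formula with $n=rK$ and $dn/2=rM$. The goal is to collapse these into $\frac{1}{2\sqrt2\pi}d^{rM+1/2}K^{r-1/2}\Gamma(r)$. I expect this bookkeeping, together with justifying uniform Laplace asymptotics when several maximizers have equal modulus but opposite signs, to be the routine but error-prone part. The genuine difficulty is the automorphism identity in Step~1.
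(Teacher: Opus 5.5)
The paper gives no proof of this proposition. It imports it from \cite{BorinskyMeroniWiesmann} and uses it only as a black box in Steps 4--6 of the sketch of \Cref{t:combinatorial_Ising_rep}. Your argument is a correct self-contained proof, and the two points you left open both close.

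In Step~1, phrase the orbit--stabilizer step as a count of labelings of a fixed representative $G$, not as one group of order $n!\prod_v a_v!\,b_v!$. That product depends on the type assignment, which vertex permutations move. The $n!\prod_v a_v!\,b_v!$ labelings label the vertices by $[n]$, and at each $v$ the colour-$0$ and colour-$1$ half-edges by $[a_v]$ and $[b_v]$. The half-edge automorphism group permutes these labelings freely, and two labelings give the same labelled structure iff they differ by an automorphism. This group is exactly $\aut(g,\gamma)$ from \eqref{eq:bicolored_isomorphism_sum}, with loop flips and parallel-edge permutations included. Hence $A_r=\bbE[\mathcal V(X,Y)^{rK}]/(rK)!$ holds exactly.

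In Step~4, use $M-K=1$ and $2M=dK$. Then $\Gamma(rM+1)/\Gamma(rK+1)\sim\sqrt{M/K}\,M^{rM}K^{-rK}r^r\e^{-r}$. Combined with $r^r\e^{-r}\sim\Gamma(r)\sqrt{r/(2\pi)}$ and $(2M)^{rM}=d^{rM}K^{rM}$, this gives
\[
\frac{2^{rM}\,\Gamma(rM+1)}{2\pi\,\Gamma(rK+1)}\sqrt{\frac{2\pi}{rK}}\sim\frac{1}{2\sqrt2\,\pi}\,d^{rM+1/2}K^{r-1/2}\Gamma(r),
\]
which is exactly the stated constant. As a check, $\Lambda_{d,0}=1$ with all other weights $0$ reproduces the Stirling asymptotics of $(dn-1)!!/(n!\,(d!)^n)$.

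No uniformity issue arises for maximizers of opposite sign. There are finitely many, each with its own $1+o(1)$ factor. What you obtain is $A_r=\mathrm{RHS}+o\bigl(d^{rM}K^r\Gamma(r)\max_{S^1}|\mathcal V|^{rK}\bigr)$, which is the only meaningful reading of $\sim$ when the leading sum cancels.

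Compared with the citation, your route gives more than the proposition itself. Combine the exact identity with \eqref{eq:weighted_annealed_relation}. This writes $\bbE[Z_G(\beta,B)]$ as $\bigl(2\cosh(B)\cosh(\beta)^{d/2}\bigr)^n$ times a ratio of angular integrals $\int_0^{2\pi}v^n\,d\theta$. Here $v$ is the restriction to $S^1$ of $\mathcal V_d(\,\cdot\,;\beta,B)$ in the numerator and of $\mathcal V_d(\,\cdot\,;0,0)$ in the denominator. Since $\bigl(\int_0^{2\pi}|v|^n\,d\theta\bigr)^{1/n}\to\max_{S^1}|v|$, the exponential rate follows without the hypothesis $\mathcal B\neq0$ whenever no sign cancellation occurs, which one checks for the Ising weights. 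That rate is all Step~6 needs, so the separate continuity argument at the critical point becomes unnecessary. The citation buys only brevity. The sharp constant, which both routes deliver, is never used by the paper.
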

    
    \textbf{Step 5: Specialization to the Ising weights.} 
    We now apply \Cref{p:BMW_asymptotic_enumeration} to the weighted sum in \eqref{eq:bicolored_isomorphism_sum}. By \eqref{eq:r_n_relation},
    \[ rK=n, \qquad rM=\frac{dn}{2}, \] 
    where $K=2/(d-2)$ and $M=d/(d-2)$. Thus, the integrality conditions in \Cref{p:BMW_asymptotic_enumeration} are satisfied precisely for the admissible values of $n$, namely those for which $dn$ is even. 
    
    The effective vertex weights in \eqref{eq:Lambda_Ising} are \[ \Lambda_{d-j,j} = 2\eta\rho^{d/2} \kappa^{j/2}\tau^{j\bmod 2}, \qquad 0\leq j\leq d. \] Define the homogeneous polynomial \begin{equation}\label{eq:Ising_polynomial_xy} \mathcal V_d(x,y;\beta,B) := \sum_{j=0}^d \kappa^{j/2}\tau^{j\bmod 2} \frac{x^{d-j}y^j}{(d-j)!j!}. \end{equation} The polynomial $\mathcal V$ in \Cref{p:BMW_asymptotic_enumeration} is therefore 
    \begin{equation}\label{eq:BMW_polynomial_Ising}
    \mathcal V(x,y) = 2\eta\rho^{d/2}\mathcal V_d(x,y;\beta,B). 
    \end{equation} 

On the unit circle, write $x=\cos\theta$ and $y=\sin\theta$. Separating the even and odd values of $j$ in \eqref{eq:Ising_polynomial_xy} and applying the binomial theorem yields 
    \begin{equation}\label{eq:Ising_polynomial_theta} 
    \begin{split} \mathcal V_d(\theta;\beta,B) &:= \mathcal V_d(\cos\theta,\sin\theta;\beta,B)\\ &= \frac{(1+\kappa)^{d/2}}{2d!} \left[ (1+\tau)\cos^d(\theta-\phi) + (1-\tau)\cos^d(\theta+\phi) \right], \end{split} 
    \end{equation} 
    where $\phi=\arctan\sqrt{\kappa}$. This is precisely the function introduced  before \Cref{t:combinatorial_Ising_rep}.
    
    \textbf{Step 6: Extraction of the annealed pressure.}
    Let $\Phi_d(\beta,B)$ denote the set of global maximizers of $|\mathcal V_d(x,y;\beta,B)|$ on the unit circle. Multiplication by the positive constant $2\eta\rho^{d/2}$ in \eqref{eq:BMW_polynomial_Ising} does not change the set of maximizers. It also leaves the function $\mathcal B$ in \eqref{e:function_B} unchanged, since this constant cancels in the quotient defining $\mathcal B$. 
    
    By \cite[Lemmas~3.4 and~3.5]{BorinskyRenWiesmann}, the global maxima are nondegenerate away from the critical point. Hence, for noncritical parameters and admissible values of $n$, \Cref{p:BMW_asymptotic_enumeration} gives 
    \begin{equation}\label{eq:Zn_d_combinatorial_asymptotic} 
    \begin{split} 
    Z_{n,d}(\beta,B) \sim{}& C_{n,d} \bigl(2\eta\rho^{d/2}\bigr)^n \sum_{(x,y)\in\Phi_d(\beta,B)} \frac{\mathcal V_d(x,y;\beta,B)^n} {\sqrt{\mathcal B_d(x,y;\beta,B)}}, 
    \end{split} 
    \end{equation}
    where \[ C_{n,d} := \frac{1}{2\sqrt{2}\pi} d^{dn/2+1/2} \left(\frac{2}{d-2}\right)^{r-1/2} \Gamma(r). \] 
    Here, $\mathcal B_d$ denotes the function $\mathcal B$ associated with $\mathcal V_d$. If $dn$ is odd, no $d$-regular graph on $n$ vertices exists, and hence $Z_{n,d}(\beta,B)=0$.
    
    At $\beta=B=0$, we have $\kappa=\tau=0, \eta=\rho=1,$  and therefore $\mathcal V_d(x,y;0,0) = \frac{x^d}{d!}.$  In particular, 
    \[ \max_{(x,y)\in S^1} |\mathcal V_d(x,y;0,0)| = \frac{1}{d!}. \] 
   Applying the same asymptotic formula to $Z_{n,d}(0,0)$ and taking the ratio, the common factor $C_{n,d}$ cancels. For admissible values of $n$, \Cref{p:BMW_asymptotic_enumeration} shows that the exponential growth rate is determined by the maximum of $|\mathcal V_d|$ on the unit circle. Consequently,
    \begin{equation}\label{eq:combinatorial_ratio_limit} 
     \lim_{\substack{n\to\infty\\dn\text{ even}}} \frac{1}{n} \log\frac{Z_{n,d}(\beta,B)}{Z_{n,d}(0,0)} = \log\eta + \frac{d}{2}\log\rho + \log\left( d!\max_{\theta\in\bbR} |\mathcal V_d(\theta;\beta,B)| \right). 
    \end{equation} 
    Away from the critical point, this identity follows from the preceding asymptotic formulas. At the critical point, the same formula for the exponential growth rate follows by continuity in $(\beta,B)$.
    
    By \eqref{eq:def_f_combinatorial}, the left-hand side of \eqref{eq:combinatorial_ratio_limit} is $f(\beta,B)$. Substituting $\eta=\cosh(B)$ and $\rho=\cosh(\beta)$ gives \eqref{eq:combinatorial_Ising_pressure}. Finally, \eqref{eq:weighted_annealed_relation} yields \eqref{eq:combinatorial_Ising}, completing the proof. 
    \end{proof}

\end{document}